\newtheorem{theorem}{Theorem}
\newtheorem{lemma}{Lemma}
\newtheorem{corollary}{Corollary}
\newtheorem{proposition}{Proposition}
\theoremstyle{definition}
\newtheorem{remark}{Remark}
\newtheorem*{assumption*}{\assumptionnumber}
\providecommand{\assumptionnumber}{}
\newcommand*\rel@kern[1]{\kern#1\dimexpr\macc@kerna}
\newcommand*\widebar[1]{%
  \begingroup
  \def\mathaccent##1##2{%
    \rel@kern{0.8}%
    \overline{\rel@kern{-0.8}\macc@nucleus\rel@kern{0.2}}%
    \rel@kern{-0.2}%
  }%
  \macc@depth\@ne
  \let\math@bgroup\@empty \let\math@egroup\macc@set@skewchar
  \mathsurround\z@ \frozen@everymath{\mathgroup\macc@group\relax}%
  \macc@set@skewchar\relax
  \let\mathaccentV\macc@nested@a
  \macc@nested@a\relax111{#1}%
  \endgroup
}
\def\R{\mathbb{R}}
\def\E{\mathbb{E}}
\def\T{\mathsf{T}}
\def\Cov{\mathrm{Cov}}
\def\Var{\mathrm{Var}}
\def\tr{\mathrm{tr}}
\def\df{\mathrm{df}}
\def\sign{\mathrm{sign}}
\def\hbeta{\hat{\beta}}
\def\htheta{\hat{\theta}}
\def\Risk{\mathrm{Risk}}
\def\Err{\mathrm{Err}}
\def\SURE{\mathrm{SURE}}
\def\hCov{\widehat\Cov}
\def\hdf{\widehat\df}
\def\Ef{\mathrm{Efr}}
\def\BY{\mathrm{BY}}
\def\CB{\mathrm{CB}}
\def\pb{{*b}}
\def\mb{{\dagger{b}}}
\def\Bias{\mathrm{Bias}}
\def\IVar{\mathrm{IVar}}
\def\RVar{\mathrm{RVar}}
\def\Cor{\mathrm{Cor}}
\title{Unbiased Risk Estimation in the Normal Means Problem via Coupled 
  Bootstrap Techniques}
\author{Natalia L.\ Oliveira$^{1,2}$ \and Jing Lei$^{1}$ \and Ryan J.\ Tibshirani$^{1,2}$} 
\date{$^1$Department of Statistics and Data Science, 
  $^2$Machine Learning Department\\ 
    Carnegie Mellon University}
\begin{document}
\maketitle

\begin{abstract} 
We develop a new approach for estimating the risk of an arbitrary estimator of
the mean vector in the classical normal means problem. The key idea is to
generate two auxiliary data vectors, by adding carefully constructed normal
noise vectors to the original data.  We then train the estimator of interest on
the first auxiliary vector and test it on the second.  In order to stabilize the 
risk estimate, we average this procedure over multiple draws of the synthetic
noise vector. A key aspect of this \emph{coupled bootstrap} (CB) approach is
that it delivers an unbiased estimate of risk under no assumptions on the
estimator of the mean vector, albeit for a modified and slightly ``harder''
version of the original problem, where the noise variance is elevated. We prove 
that, under the assumptions required for the validity of Stein’s unbiased risk 
estimator (SURE), a limiting version of the CB estimator recovers SURE
exactly. We then analyze a bias-variance decomposition of the error of the CB
estimator, which elucidates the effects of the variance of the auxiliary noise 
and the number of bootstrap samples on the accuracy of the estimator. Lastly, we  
demonstrate that the CB estimator performs favorably in various simulated
experiments.
\end{abstract}

\section{Introduction}
\label{sec:introduction}

Given a model that has been fitted on a particular data set, assessing its
risk---typically defined in terms of the accuracy in estimating some population
parameter, or its prediction error---typically defined in terms of the accuracy
in predicting new (unseen) observations, are fundamental questions in both 
classical statistical decision theory and modern statistical machine learning.
Estimates of risk or prediction error can be used for a multitude of purposes, 
e.g., serving as a key input for a decision point (\emph{is the given model good 
  enough to be deployed?}), or a tool for model selection (\emph{is model A  
  preferred over model B?}), or model tuning (\emph{which level of
  regularization strength should be used?}). Naturally, methodology for
risk and prediction error estimation has received considerable attention over
the years in the literature, with foundational work contributed by Akaike,
Mallows, Stein, Efron, Breiman, and others (precise references and discussion
to be given shortly). The current paper revisits this classical topic and
proposes a method to estimate the risk---or equivalently the prediction error in
a fixed-X regression model---based on an auxiliary randomization scheme that
avoids data splitting or resampling techniques.

To fix notation, consider a standard normal means setting, where we observe 
data $Y = (Y_1,\ldots,Y_n) \in \R^n$ distributed according to:     
\begin{equation}
\label{eq:data_model}
Y \sim N(\theta, \sigma^2 I_n),
\end{equation}
where $\theta \in \R^n$ is an unknown parameter to be estimated.  The marginal  
error variance $\sigma^2 > 0$ is assumed to be known, and $I_n$ denotes the $n
\times n$ identity matrix.  An estimator in the context of this problem is
simply a measurable function $g : \R^n \to \R^n$ that, from $Y$, produces an
estimate \smash{$\htheta = g(Y)$} of the mean vector $\theta \in \R^n$.   Given
a loss function $L : \R^n \times \R^n \to \R$, the risk of $g$ is defined by its 
expected loss to $\theta$,
\begin{equation}
\label{eq:general_risk}
\Risk(g) = \E[L(\theta, g(Y))].
\end{equation}
In what follows, without further specification, we work under quadratic loss, so
that the above becomes:
\begin{equation}
\label{eq:quadratic_risk}
\Risk(g) = \E\|\theta - g(Y)\|_2^2 = \E\bigg[ \sum_{i=1}^n (\theta_i -  
g_i(Y))^2 \bigg] , 
\end{equation}
with $g_i$ denoting the $i$th component function of $g$.  In the discussion, we
return to a more general setting and consider \eqref{eq:general_risk} in the
case of loss functions defined by a Bregman divergence.  

In this work, we propose and study a method that we call the \emph{coupled
  bootstrap} to estimate the risk of an arbitrary function $g$ as in
\eqref{eq:quadratic_risk}. Before introducing this method, we discuss the 
connection to prediction error, and introduce and descibe related methods from
the literature, to better contextualize our contributions.   
% (We believe that the insights we develop in describing these other estimators
% are themselves nontrivial, and contribute to understanding what has been done
% in the past overall.)   

\subsection{Prediction error, fixed-X regression}

Under quadratic loss (with known $\sigma^2$), estimating risk in
\eqref{eq:quadratic_risk} is equivalent to estimating prediction error, the
expected loss between $g(Y)$ and an independent copy of $Y$, as these two 
quantities are related via   
\begin{equation}
\label{eq:pred_error}
\E\|\tilde{Y} - g(Y)\|_2^2 = \E\|\theta - g(Y)\|_2^2 + n\sigma^2, \quad  
\text{where $\tilde{Y} \sim N(\theta, \sigma^2 I_n)$, independent of $Y$}. 
\end{equation}
In other words, prediction error and risk differ only by a known constant
$n\sigma^2$. In the literature, the choice of whether to focus on prediction
error or risk is generally based on which is the more natural metric in the
applications they use to motivate the study at hand.

An important special case of the normal means problem in which prediction error
is a common focus is \emph{fixed-X regression}: here $Y \in \R^n$ is viewed as a
response vector that is paired with a feature matrix $X \in \R^{n \times p}$
i.e., the $i$th row of $X$ is a feature vector associated with $Y_i$, and $g$
usually performs a kind of regression of $Y$ on $X$.  For example, when
\smash{$g(Y) = X \hbeta$} for some estimated coefficient vector \smash{$\hbeta
  \in \R^p$}, and the mean in \eqref{eq:data_model} is itself linear in $X$,
i.e., $\theta = X \beta$ for some coefficient vector $\beta \in \R^p$, then the
decomposition in \eqref{eq:pred_error} becomes
\begin{equation}
\label{eq:pred_error_x}
\E\|\tilde{Y} - X \hbeta\|_2^2 = \E\|X \beta - X \hbeta\|_2^2 + n\sigma^2, \quad   
\text{where $\tilde{Y} \sim N(X \beta, \sigma^2 I_n)$, independent of $Y$}. 
\end{equation}
We emphasize that we are treating $X$ here as \emph{fixed} (nonrandom);
further, by measuring prediction error as in \eqref{eq:pred_error_x}, we are
treating $X$ as the \emph{common} set of features that are used across both 
training and testing (i.e., \smash{$\tilde{Y}$} is a new vector of response
values, but observed at the same features as $Y$).    

Much of the classical literature on prediction error estimation in statistics
falls in the fixed-X regression setting, with, e.g., \emph{Mallow's Cp}
\citep{mallows1973comments} marking a seminal early contribution in this area.
In some applications of regression (such as experimental design), the fixed-X 
perspective is natural; yet in others, a \emph{random-X} perspective is more
natural, where prediction error is measured with respect to a \emph{new}
feature vector (drawn i.i.d.\ from the same distribution as the training
features). It is worth being clear that prediction error in the fixed-X and
random-X sense are generally \emph{not} equivalent and admit critical
differences (see, e.g., \citet{rosset2020from} for a discussion); thus,
methods for estimating prediction error in the fixed-X setting do not
necessarily translate to the random-X setting, and vice versa.  For example,
sample splitting and cross-validation are arguably the most widely-used tools
for estimating random-X prediction error, but are not generally applicable for
fixed-X (when $X$ is fixed, there is no reason to believe that a random subset
of its rows will be representative of the full set). On the other hand, the CB
estimator that we will develop is aligned with fixed-X prediction error, but not 
random-X prediction error in general.  

To summarize, in this paper, we choose to focus on risk as in
\eqref{eq:quadratic_risk} for simplicity of exposition,
but as we explained above, our results translate over to prediction error in
\eqref{eq:pred_error}, which encompasses fixed-X regression error as in
\eqref{eq:pred_error_x}. In what follows, we will move back and forth between  
the two concepts (risk and prediction error) fluidly, as needed.     

\subsection{Stein's unbiased risk estimator}
\label{sec:sure}

One of the most well-known and widely-used risk estimators in the normal 
means problem is due to \citet{stein1981estimation}. For concreteness, we
translate this result into the notation of our paper. 

\begin{theorem}[\citealt{stein1981estimation}]
\label{thm:sure}
Let $Y \sim N(\theta, \sigma^2 I_n)$. Let $g : \R^n \to \R^n$ be weakly
differentiable\footnote{Weak differentiability of $g$ is actually a slightly
  stronger assumption than needed, but is stated for simplicity; see
  Remark \ref{rem:weak_diff}.}, and write $\nabla_i g_j$ for the weak partial
derivative of component function $g_j$ with respect to variable $y_i$.  Assume
that $\E\|g(Y)\|_2^2 < \infty$, and $\E|\nabla_i g_i(Y)| < \infty$, for 
$i=1,\ldots,n$. Denote the divergence of $g$ by \smash{$\nabla \cdot g =
  \sum_{i=1}^n \nabla_i g_i$}, and define      
\begin{equation}
\label{eq:sure}
\SURE(g) = \|Y - g(Y)\|_2^2 + 2\sigma^2 (\nabla \cdot g)(Y) - n\sigma^2,
\end{equation}
Then the above provides an unbiased estimator of risk: $\E[\SURE(g)] =
\Risk(g)$.      
\end{theorem}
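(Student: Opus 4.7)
The plan is to reduce the identity $\E[\SURE(g)] = \Risk(g)$ to a componentwise Gaussian integration-by-parts formula (Stein's lemma) after an algebraic expansion of the squared loss.

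First I would expand
\[
\|Y-g(Y)\|_2^2 = \|Y-\theta\|_2^2 - 2(Y-\theta)^\T(g(Y)-\theta) + \|g(Y)-\theta\|_2^2
\]
and take expectations. Since $\E\|Y-\theta\|_2^2 = n\sigma^2$ and $\E[(Y-\theta)^\T\theta]=0$, this rearranges to
\[
\Risk(g) = \E\|Y-g(Y)\|_2^2 - n\sigma^2 + 2\sum_{i=1}^n \E\big[(Y_i-\theta_i)\,g_i(Y)\big].
\]
So the theorem will follow once I show that, for each $i=1,\ldots,n$,
\[
\E\big[(Y_i-\theta_i)\,g_i(Y)\big] = \sigma^2\,\E\big[\nabla_i g_i(Y)\big],
\]
since summing over $i$ yields the divergence term $2\sigma^2\,\E[(\nabla\cdot g)(Y)]$ appearing in \eqref{eq:sure}.

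To prove this one-dimensional identity, I would hold the coordinates $Y_j$ for $j\neq i$ fixed (using Fubini and the product form of the Gaussian density) and work with the single-variable function $y_i \mapsto g_i(Y_1,\ldots,y_i,\ldots,Y_n)$. The key observation is that the univariate $N(\theta_i,\sigma^2)$ density $\phi_i$ satisfies $\phi_i'(y_i) = -\sigma^{-2}(y_i-\theta_i)\phi_i(y_i)$, so integration by parts gives
\[
\int (y_i-\theta_i)\,g_i(\cdot)\,\phi_i(y_i)\,dy_i = \sigma^2 \int \nabla_i g_i(\cdot)\,\phi_i(y_i)\,dy_i,
\]
provided the boundary terms vanish. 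Iterating Fubini then produces the desired full identity.

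The main obstacle is the rigorous justification of the integration by parts under only \emph{weak} differentiability (rather than classical differentiability), and in handling the vanishing of the boundary contributions at $\pm\infty$. The integrability hypotheses $\E\|g(Y)\|_2^2<\infty$ and $\E|\nabla_i g_i(Y)|<\infty$ are exactly what is needed: the first, combined with the Gaussian tails of $\phi_i$, ensures that $g_i(y_1,\dots,y_n)\phi_i(y_i)\to 0$ as $|y_i|\to\infty$ for almost every choice of the remaining coordinates, and the second allows the integration by parts for weakly differentiable functions to be applied via a standard mollification/truncation argument. Once these analytic details are in place, combining the componentwise Stein identities with the expansion above yields $\E[\SURE(g)] = \Risk(g)$ directly.
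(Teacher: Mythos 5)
Your proposal is correct and follows essentially the same route the paper takes (implicitly, since Theorem \ref{thm:sure} is cited from Stein): the algebraic expansion you give is exactly Efron's covariance decomposition \eqref{eq:cov_decomp}, and the componentwise identity $\E[(Y_i-\theta_i)g_i(Y)] = \sigma^2\E[\nabla_i g_i(Y)]$ is Stein's formula \eqref{eq:stein_formula}, with the integration-by-parts justification for weakly differentiable $g$ against a Gaussian density being precisely what the paper's Lemma \ref{lem:int_by_parts} supplies. No gaps.
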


The estimator defined in \eqref{eq:sure} is known as Stein's unbiased risk
estimator (SURE). Ignoring the last term: $-n\sigma^2$, a constant not
depending on $g$, the first two terms here are the observed training error:
$\|Y - g(Y)\|_2^2$, and a measure of complexity: $2\sigma^2 (\nabla \cdot
g)(Y)$.  At the heart of Theorem \ref{thm:sure} is a result known as
\emph{Stein's formula}, which says for weakly differentiable $g$ 
\citep{stein1981estimation},
\begin{equation}
\label{eq:stein_formula}
\frac{1}{\sigma^2} \Cov(Y_i, g_i(Y)) = \E[ \nabla_i g_i(Y) ], \quad
i=1,\ldots,n. 
\end{equation}
Recall that the \emph{(effective) degrees of freedom} of $g$ is defined by
\citep{hastie1990generalized, ye1998measuring}: 
\begin{equation}
\label{eq:df}
\df(g) = \frac{1}{\sigma^2} \sum_{i=1}^n \Cov(Y_i, g_i(Y)). 
\end{equation}
This measures complexity based on the association (summed over the training
set) between each $Y_i$ and the corresponding estimate $g_i(Y)$ of $\theta_i$
(generally speaking, the more complex $g$ is, the greater this association will
be).  Note that, according to \eqref{eq:stein_formula}, \eqref{eq:df}, the
second term in \eqref{eq:sure} leverages an unbiased estimator for degrees of
freedom: $\E[(\nabla \cdot g)(Y)] = \df(g)$.      

\subsection{Efron, Breiman, and Ye}
\label{sec:eby}

For arbitrary $g$, we can always decompose its risk by:
\begin{equation}
\label{eq:cov_decomp}
\Risk(g) = \E\|Y - g(Y)\|_2^2 + 2 \sum_{i=1}^n \Cov(Y_i, g_i(Y)) - n\sigma^2,    
\end{equation}
which follows from simple algebra (add and subtract $Y$ inside the expectation
in $\E\|\theta - g(Y)\|_2^2$, and expand the quadratic). This is often referred
to as \emph{Efron's covariance decomposition} (or \emph{Efron's optimism
  theorem}), after \citet{efron1975defining, efron1986biased,
  efron2004estimation}. We reiterate that the covariance decomposition in
\eqref{eq:cov_decomp} holds for any function $g$.  The same is true of the
definition of degrees of freedom in \eqref{eq:df}: it applies to any $g$. In
fact, these do not even require normality of the data vector: \eqref{eq:df},
\eqref{eq:cov_decomp} only require the distribution of $Y$ to be isotropic
(i.e., to have a covariance matrix $\sigma^2 I_n$).  Meanwhile, Stein's formula
\eqref{eq:stein_formula}, and hence the unbiasedness of SURE \eqref{eq:sure},
only holds for a weakly differentiable $g$, and Gaussian $Y$.

Efron's covariance decomposition reveals that, to get an unbiased estimator of
$\Risk(g)$, we only need an unbiased estimator of the second term: \smash{$2
  \sum_{i=1}^n \Cov(Y_i, g_i(Y))$}, called the \emph{optimism} of $g$.  This is
because the first term, the (expected) training error, clearly yields the
observed training error as its unbiased estimator.  A natural way to estimate 
optimism is to use the bootstrap, or more precisely, the \emph{parametric
  bootstrap}. This has been pursued by several authors, notably
\citet{breiman1992little, ye1998measuring, efron2004estimation}. In the
parametric bootstrap, we generate samples  
\begin{equation}
\label{eq:bootstrap}
Y^{*b} \,|\, Y \sim N(Y, \alpha \sigma^2 I_n), \quad 
\text{independently}, \quad \text{for $b=1,\ldots,B$},
\end{equation}
for some constant $\alpha > 0$ (typically $\alpha \leq 1$).  We then form the
estimates:  
\begin{equation}
\label{eq:bootstrap_cov}
\hCov_i^* = \frac{1}{B-1} \sum_{b=1}^B (Y_i^{*b} - \bar{Y}_i^*) g_i(Y^{*b}),
\quad i=1,\ldots,n,       
\end{equation}
where \smash{$\bar{Y}_i^* = \frac{1}{B} \sum_{b=1}^B Y^{*b}_i$}, $i=1,\ldots,n$ 
are the bootstrap means of the coordinates.  \citet{efron2004estimation} also  
presents a more general framework in which, instead of \eqref{eq:bootstrap}, we
draw bootstrap samples from \smash{$N(\check\theta, \alpha \sigma^2 I_n)$}, for
some seed estimate \smash{$\check\theta$}.  As an effort to reduce bias, Efron
recommends using a more flexible model for estimating \smash{$\check\theta$} 
compared to that for \smash{$\htheta = g(Y)$}, where the ``ultimate'' flexible
model (as Efron calls it) reduces to \smash{$\check\theta = Y$}, in
\eqref{eq:bootstrap}.  This is also the choice made in both
\citet{breiman1992little} and \citet{ye1998measuring}.       

While there are strong commonalities among the parametric bootstrap proposals of
Efron, Breiman, and Ye, all three being centered around
\eqref{eq:bootstrap_cov}, there are also noteworthy differences in how these  
authors use \eqref{eq:bootstrap_cov} in order to estimate risk.  Efron proposes
the risk estimator:  
\begin{equation}
\label{eq:efron_risk}
\Ef_\alpha(g) = \|Y - g(Y)\|_2^2 + 2 \sum_{i=1}^n \hCov_i^* - n\sigma^2,   
\end{equation}
whereas Breiman and Ye effectively propose the risk estimator:  
\begin{equation}
\label{eq:by_risk}
\BY_\alpha(g) = \|Y - g(Y)\|_2^2 + \frac{2}{\alpha} \sum_{i=1}^n \hCov_i^* - n
\sigma^2. 
\end{equation}
We say ``effectively'' here because Breiman and Ye consider a slightly 
different estimator than that in \eqref{eq:by_risk}.  See
Appendix \ref{app:by_estimators} for details.  But for a large number of
bootstrap draws $B$, the proposals of Breiman and Ye will behave very similarly
to \eqref{eq:by_risk}, and thus we refer to \eqref{eq:by_risk} as the Breiman-Ye
(BY) risk estimator.     

The difference between \eqref{eq:efron_risk} and \eqref{eq:by_risk} is that in
the latter the sum of estimated covariances is scaled by $1/\alpha$.  Efron,
Breiman, and Ye each generally advocate for choices of $\alpha$ in between 0.6
and 1. For such a large value of $\alpha$, the scaling factor $1/\alpha$ in
\eqref{eq:by_risk} will not play a huge role.  But for small values of
$\alpha$---a regime that is of interest in the current paper---this scaling 
factor will make all the difference.      

\subsection{What are these bootstrap methods estimating?}

The bootstrap methods in \eqref{eq:efron_risk} and \eqref{eq:by_risk}
are well-known and widely-used for estimating risk in normal means
problems. Both are fairly natural. Efron's estimator \eqref{eq:efron_risk} 
directly uses the parametric bootstrap to estimate optimism: \smash{$2
  \sum_{i=1}^n \Cov(Y_i, g_i(Y))$}. For the BY estimator \eqref{eq:by_risk},
writing    
$$
\frac{2}{\alpha} \sum_{i=1}^n \hCov_i^* = 2\sigma^2 
\underbrace{\frac{1}{\alpha \sigma^2} \sum_{i=1}^n \hCov_i^*}_{\hdf(g)},
$$
we see that it can be motivated from the perspective of estimating degrees of
freedom (rather than optimism) via the parametric bootstrap, since the
conditional variance of the bootstrap draws (given $Y$) is $\alpha \sigma^2$.

Now we come to a key point: the motivation given for the above estimators is
based on the \emph{conditional} distribution of bootstrap samples (conditional
on the data $Y$). However, their performance as risk estimators hinges on how
they behave \emph{marginally} over $Y$, and unfortunately, from the marginal
point of view, it is not as clear what these methods are actually targeting. We 
discuss this for each method separately.        

\subsubsection{Efron's Estimator}
\label{sec:efron}

First, consider Efron's estimator in \eqref{eq:efron_risk}.  Write $Y^*$ 
for a single bootstrap draw, i.e., $Y^*\,|\, Y \sim N(Y, \alpha \sigma^2 
I_n)$. As this estimator treats $Y^*$ as the data vector (in place of $Y$), one
might suppose that marginally it targets the optimism of $g$, but at an elevated
noise level $(1+\alpha) \sigma^2$ (instead of $\sigma^2$), because $Y^*\sim
N(\theta, (1+\alpha) \sigma^2)$. However, its expectation does not really
support this claim.  To see this, first observe that
$$
\E\big[ \hCov_i^* \,\big|\, Y \big] = \Cov\big(Y_i^*, g_i(Y^*) \,\big|\, 
Y\big).
$$
Here we simply used the fact that an empirical covariance computed from i.i.d.\
samples of a pair of random variables is unbiased for their covariance
(everything here being conditional on $Y$).  Next observe that
\begin{equation}
\label{eq:law_total_cov}
\sum_{i=1}^n \Cov(Y_i^*, g_i(Y^*)) = \underbrace{\sum_{i=1}^n   
  \E\big[\Cov\big(Y_i^*, g_i(Y^*) \,\big|\, Y\big)\big]}_{A_\alpha} \,+\,    
\underbrace{\sum_{i=1}^n \Cov(Y_i, g_i(Y^*))}_{B_\alpha},
\end{equation}
by the law of total covariance, and where we used $\Cov(\E[Y_i^* \,|\, Y],
\E[g_i(Y^*) \,|\, Y]) = \Cov(Y_i, g_i(Y^*))$, for each summand in the second
term, which follows from a short calculation. Therefore Efron's method delivers
a covariance term with marginal expectation:       
\begin{equation}
\label{eq:efron_cov_mean}
\E\bigg[\sum_{i=1}^n \hCov_i^*\bigg] = \E\bigg[\sum_{i=1}^n \Cov\big(Y_i^*,  
g_i(Y^*) \,\big|\, Y\big)\bigg].
\end{equation}
This only captures a part of the optimism of $g$ at the elevated noise level
$(1+\alpha) \sigma^2$, labeled $A_\alpha$ in \eqref{eq:law_total_cov}, and 
not a second part, labeled $B_\alpha$ in \eqref{eq:law_total_cov}.   

Based on this, we can reason that for small $\alpha$, the bootstrap estimator 
\smash{$\sum_{i=1}^n \hCov_i^*$} will typically be badly biased for the
noise-elevated covariance \smash{$\sum_{i=1}^n \Cov(Y_i^*, g_i(Y^*))$}, and
hence also badly biased for the original covariance \smash{$\sum_{i=1}^n
  \Cov(Y_i, g_i(Y))$} (as this will be close to the noise-elevated version).
This is because it will be concentrated around $A_\alpha$ in
\eqref{eq:law_total_cov}, which will typically be small in comparison to the
second component $B_\alpha$ in \eqref{eq:law_total_cov}.  For example, for a
linear smoother $g(Y) = SY$ (for a fixed matrix $S \in \R^{n \times n}$), note
that 
\begin{equation}
\label{eq:ab_linear}
A_\alpha = \alpha \sigma^2 \tr(S) \quad \text{and} \quad 
B_\alpha = \sigma^2 \tr(S),
\end{equation}
and the latter term will dominate for small $\alpha$.  Similar arguments hold 
for locally linear $g$ (well-approximated by its first-order Taylor expansion).  

Meanwhile, for moderate $\alpha$, the estimator \smash{$\sum_{i=1}^n
  \hCov_i^*$} \emph{can} have low bias for \smash{$\sum_{i=1}^n \Cov(Y_i,
  g_i(Y))$}  (this is the original covariance and \emph{not} the noise-elevated
version, which will be generally larger for moderate $\alpha$), if we are able
to choose $\alpha$ such that \smash{$A _\alpha \approx \sum_{i=1}^n \Cov(Y_i,
  g_i(Y))$}. For linear smoothers, as we can see from \eqref{eq:ab_linear}, we 
simply need to take $\alpha=1$.  In general, however, it will not be at all
clear how to choose $\alpha$ appropriately, as it will be unclear how
$A_\alpha$ behaves with $\alpha$.  More broadly, for any given value of
$\alpha$ in hand, it is not clear precisely what is being targeted in
\eqref{eq:efron_cov_mean}, and thus, not clear precisely what risk is being
estimated by \eqref{eq:efron_risk}.

\subsubsection{Breiman-Ye Estimator}

Next, consider the BY estimator in \eqref{eq:by_risk}. By the same calculations
as in the last case, we see that the BY method uses a covariance term with
marginal expectation:   
\begin{equation}
\label{eq:by_cov_mean}
\frac{1}{\alpha} \E\bigg[\sum_{i=1}^n \hCov_i^*\bigg] = \frac{1}{\alpha}
\E\bigg[\sum_{i=1}^n \Cov\big(Y_i^*, g_i(Y^*) \,\big|\, Y\big)\bigg].
\end{equation}
The sum above only captures one part of the optimism at the elevated noise level
$(1+\alpha) \sigma^2$, labeled $A_\alpha$ in \eqref{eq:law_total_cov}, but the
sum is also inflated by division by $\alpha$ (recall, usually $\alpha \leq 1$).
This makes the behavior of the BY method more subtle than that of Efron's
method; we seek $\alpha$ so that \smash{$A_\alpha/\alpha \approx \sum_{i=1}^n 
  \Cov(Y_i, g_i(Y))$}, yet it is unclear whether this means that we should
choose $\alpha$ to be small or large.  

The case of a linear smoother $g(Y) = SY$ is encouraging: recalling
\eqref{eq:ab_linear}, we have $A_\alpha/\alpha = \sigma^2 \tr(S)$, which is
equal to \smash{$\sum_{i=1}^n \Cov(Y_i, g_i(Y))$} for any value of $\alpha$. Of
course, in general we will not be so lucky, and varying $\alpha$ will vary 
$A_\alpha/\alpha$, hence vary what we are targeting in
\eqref{eq:by_cov_mean}. This brings us to the same general difficulty with the BY
estimator as in the last case: for any given choice of $\alpha$, it is unclear
what quantity is actually being estimated by \smash{$\frac{1}{\alpha}
  \sum_{i=1}^n \hCov_i^*$}, and thus, unclear precisely what risk is being 
estimated by \eqref{eq:by_risk}.

\subsection{Proposed estimator}

The main proposal in this paper is a new estimator for the risk of an arbitrary
function $g$, based on bootstrap draws as in \eqref{eq:bootstrap}.  The key
motivation for our estimator is that, for any $\alpha$, it will be unbiased for
an intuitive, explicit target: the risk of $g$ at the noise level of $(1+\alpha)
\sigma^2$, which we denote by     
\begin{equation}
\label{eq:risk_alpha}
\Risk_\alpha(g) = \E\|\theta -  g(Y_\alpha)\|_2^2, 
\quad \text{where $Y_\alpha \sim N(\theta, (1+\alpha) \sigma^2 I_n)$}. 
\end{equation}
One can think of $\Risk_\alpha(g)$ as the risk for a ``harder'' version of the
original problem, where the mean $\theta$ is the same, but the noise variance
$\sigma^2$ is multiplied by a factor of $1+\alpha$. Later (in Proposition 
\ref{prop:risk_alpha_smoothness}), we will show that $\Risk_\alpha(g)$ converges 
to $\Risk(g)$ as $\alpha \to 0$, and in fact, does so smoothly: it is 
continuously differentiable in $\alpha$, under only mild moment conditions on
$g(Y)$.

In order to estimate $\Risk_\alpha(g)$, we take an approach that departs in two
ways from prior work.  First, we do not rely on the covariance decomposition 
\eqref{eq:cov_decomp}, and do not frame the problem in terms of directly
estimating optimism (or degrees of freedom); this circumvents the need to
estimate a covariance with the bootstrap (and as such, avoids challenges due to
the law of total covariance \eqref{eq:law_total_cov}). Second, coupled with each
bootstrap draw in \eqref{eq:bootstrap}, we carefully generate another bootstrap
draw that is \emph{marginally} independent from it (which gives us a total of
$2B$ draws). In particular, we generate samples according to:  
\begin{equation}
\label{eq:cb}
\begin{gathered}
\omega^b \sim N(0, \sigma^2 I_n), \quad 
\text{independently}, \quad \text{for $b=1,\ldots,B$}, \\ 
Y^\pb = Y + \sqrt\alpha \omega^b, \quad 
Y^\mb = Y - \omega^b / \sqrt\alpha, \quad \text{for $b=1,\ldots,B$}, 
\end{gathered}
\end{equation}
for some constant $\alpha > 0$, and based on these samples, we define the risk
estimator:  
\begin{equation}
\label{eq:cb_risk}
\CB_\alpha(g) = \frac{1}{B} \sum_{b=1}^B \Big(\|Y^\mb - g(Y^\pb)\|^2_2 -
\|\omega^b\|_2^2 / \alpha\Big) - n\sigma^2.
\end{equation} 
The intuition here is that each pair \smash{$(Y^\pb, Y^\mb)$} comprises two
independent samples from a normal distribution with mean $\theta$, and hence 
each squared error term \smash{$\|Y^\mb - g(Y^\pb)\|_2^2$} imitates the
prediction error incurred by $g(Y)$ at a new copy of $Y$.  Together, the
remaining terms $-\|\omega^b\|_2^2 / \alpha$ (in each summand) and
$-n\sigma^2$ adjust for the fact that \smash{$Y^\pb$} and \smash{$Y^\mb$} have
different variances, and bring us from the prediction scale to the risk scale
(recall \eqref{eq:pred_error}).  In this paper, we refer to \eqref{eq:cb_risk}
as the \emph{coupled bootstrap} (CB) risk estimator.         

In \eqref{eq:cb_risk}, as $g$ is applied to a noise-elevated draw
\smash{$Y^\pb$} that has mean $\theta$ and variance $(1+\alpha) \sigma^2$, one  
might conjecture that we are targeting risk (or prediction error) at the
noise-elevated level $(1+\alpha) \sigma^2$.  Later, when we provide more details  
behind the construction of the CB estimator \eqref{eq:cb_risk}, we will show (in
Corollary \ref{cor:cb_unbiased}) that this is indeed true: $\E[\CB_\alpha(g)] =
\Risk_\alpha(g)$.  This is a strong property, and it holds without any
assumptions on $g$ whatsoever.         

\subsection{Summary of contributions}

The following is a summary of our main contributions and an outline for this
paper. 

\begin{itemize}
\item In Section \ref{sec:basic_properties}, we examine basic properties of
  the CB risk estimator, which includes proving that for any $g$ and any
  $\alpha$, the CB estimator is unbiased for $\Risk_\alpha(g)$.   

\item In Section \ref{sec:noiseless_limit}, we study the behavior of the
  CB estimator as $B \to \infty$ and $\alpha \to 0$, and prove that under the    
  same smoothness assumptions on $g$ as those in  \citet{stein1981estimation}
  (to guarantee unbiasedness of SURE; recall Theorem \ref{thm:sure}), the
  limiting CB estimator recovers SURE exactly.  

\item In Section \ref{sec:bias_variance}, we analyze the bias and variance
  (quantifying their dependence on $\alpha$ and other problem parameters) of the
  CB estimator when it is viewed as an estimator of $\Risk(g)$, the original
  risk.  Insights from this include a recommendation to choose the number of
  bootstrap draws $B$ to scale with $1/\alpha$, for small $\alpha$, in order to
  control the variance of the CB estimator. 

\item In Section \ref{sec:experiments}, we compare the CB estimator to the
  existing bootstrap methods (Efron and BY) for risk estimation in simulations. 
  We find that the CB estimator generally performs favorably, particularly so
  when $g$ is unstable.    

\item In Section \ref{sec:discussion}, we conclude with a discussion, and give
  an extension of our coupled bootstrap framework to the setting of structured  
  errors (i.e., a non-isotropic covariance in \eqref{eq:data_model}), as well as 
  extensions to other loss functions and distributions.       
\end{itemize}

\subsection{Related work}
\label{sec:related_work}

Risk (or prediction error) estimation is a well-studied topic and has a rich
history in statistics.  What follows is by no means comprehensive, but is a
selective review of papers that are most related to our paper, apart from
\citet{breiman1992little, ye1998measuring, efron2004estimation}, which have
already been discussed in some detail. 

In a sense, covariance penalties originated in the work of
\citet{akaike1973information} and \citet{mallows1973comments}, who focused on
classical likelihood-based models and fixed-X linear regression,
respectively. \citet{stein1981estimation} greatly extended the scope of models
under consideration (or in our notation, functions $g$ whose risk is to be 
estimated) with SURE, which applies broadly to models whose predictions vary
smoothly with respect to the input data $Y$; recall Theorem \ref{thm:sure}.
Stein's work has had a huge impact in both statistics and signal processing, and
SURE is now a central tool in wavelet modeling, image denoising, penalized
regression, low-rank matrix factorization, and other areas; see, e.g.,
\citet{donoho1995adapting, cai1999adaptive, johnstone1999wavelet,
  blu2007surelet, zou2007degrees, zou2008regularized, 
  tibshirani2011solution, tibshirani2012degrees, candes2013unbiased,
  ulfarsson2013tuning1, ulfarsson2013tuning2, wang2013sure,
  krishnan2013selection}.   

A downside of SURE is that it cannot be applied to various models of interest
(e.g., tree-based methods, certain variable selection methods, and so on), as it 
requires $g$ to be weakly differentiable, which is generally violated when
$g$ is discontinuous.  Meanwhile, even when SURE is applicable, it is often
highly nontrivial to (analytically) calculate the Stein divergence
$\nabla \cdot g$; in fact, the key contribution in many of the papers given in
the last set of references is that the authors were able to calculate this
divergence for an interesting class of models (e.g., wavelet thresholding, total
variation denoising, lasso regression, and so on).   

These shortcomings of SURE are well-known.  Extensions of SURE to accommodate  
discontinuities in $g$ were derived in \citet{tibshirani2015degrees, 
  mikkelsen2018degrees}; see also \citet{tibshirani2019excess}.  While useful in
some contexts, these extensions are generally far more complicated (and harder
to compute) than SURE.  On the computational side, \citet{ramani2008monte}
proposed a Monte Carlo method for approximating SURE that only requires
evaluating $g$ (and not its partial derivatives).  This has since become quite
popular in the signal processing community, see, e.g.,
\citet{chatterjee2009clustering, lingala2011accelerated, metzler2016denoising,  
  soltanayev2018training} for applications of this idea and follow-up work. 

As it turns out, the Monte Carlo SURE approach of \citet{ramani2008monte} is
precisely the same as the bootstrap method of \citet{breiman1992little}.  It is
thus also highly related to the work of \citet{ye1998measuring}, and essentially
equivalent to what we call the BY risk estimator in \eqref{eq:by_risk}; recall
the discussion in Section \ref{sec:eby}.  It seems that Ramani et al.\ were
unaware of the past work of Breiman and Ye.  That being the case, their work
provided an important new perspective on this methodology: they show that for
infinite bootstrap samples ($B=\infty$) and with appropriate smoothness
conditions on $g$, Monte Carlo SURE (and thus the BY estimator in
\eqref{eq:by_risk}) converges to SURE in \eqref{eq:sure} as $\alpha \to 0$.
Breiman and Ye, on their part, seemed unaware of this connection, as they both
cautioned against choosing small values of $\alpha$, advocating for choices of
$\alpha$ upwards of 0.5.

% RJT: To be fair, it seems both Breiman and Ye were concerned about variance,
% not bias ... 
 
Finally, we note that the work of \citet{tian2020prediction} inspired us to
pursue the current paper. Tian proposed the coupled bootstrap approach in
\eqref{eq:cb} (albeit with $B=1$) to estimate the fixed-X regression error of
prediction rules that perform feature selection in a working linear model (such
as the lasso). Their focus was different than ours: they study the estimation of
prediction error conditional on a model selection event (such as the event that
the lasso selects a particular active set). They conduct a bias-variance
analysis as a function of the noise inflation parameter $\alpha$, under the
assumption that the true model is itself linear. They also recommend a
diminishing choice of $\alpha$ as the sample size grows. This is all done in
service of an asymptotic analysis which shows that the estimated prediction
error converges to the true one as $n \to \infty$. The idea of using auxiliary
randomization in the literature on inference after model selection was initiated
by \citet{tian2018selective}, and has since been further developed by several
others, e.g., \citet{rasines2023splitting, leiner2024data, neufeld2024data},
some of this literature developed concurrently with our paper, and some after.   

\section{Basic properties}
\label{sec:basic_properties}

In this section, we investigate basic properties of the CB estimator in
\eqref{eq:cb_risk}, beginning with its unbiasedness for the noise-elevated risk
in \eqref{eq:risk_alpha}.      

\subsection{Unbiasedness for noise-elevated target}

The unbiasedness of CB estimator for the appropriate noise-elevated risk stems
from a simple ``three-point'' formula under squared error loss. Here and
subsequently, we use $\langle a, b \rangle = a^\T b$ for vectors $a,b$.            

\begin{proposition}
\label{prop:three_point}
Let $U,V,W \in \R^n$ be independent random vectors.  Then for any $g$,
\begin{equation}
\label{eq:three_point1}
\E \|V - g(U)\|_2^2 - \E \|W - g(U)\|_2^2 = \E\|V\|_2^2 - \E\|W\|_2^2 + 2
\langle \E[g(U)], \E[W] - \E[V] \rangle,
\end{equation}
assuming all expectations exist and are finite. In particular, if $U,V$ are
i.i.d.\ and $\E[U] = \E[W]$, then
\begin{equation}
\label{eq:three_point2}
 \E\|V - g(U)\|_2^2 = \E \|W - g(U)\|_2^2 + \E\|U\|_2^2 - \E\|W\|_2^2. 
\end{equation}
\end{proposition}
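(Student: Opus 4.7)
The plan is to prove the identity \eqref{eq:three_point1} by simply expanding the squared norms, moving the expectation inside, and exploiting the independence of $U$ from $V$ and $W$; then \eqref{eq:three_point2} will follow immediately by specializing.

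For \eqref{eq:three_point1}, I would expand
\[
\|V - g(U)\|_2^2 = \|V\|_2^2 - 2\langle V, g(U)\rangle + \|g(U)\|_2^2,
\]
and do the analogous expansion for $\|W - g(U)\|_2^2$. Subtracting, the $\|g(U)\|_2^2$ terms cancel, leaving
\[
\|V - g(U)\|_2^2 - \|W - g(U)\|_2^2 = \|V\|_2^2 - \|W\|_2^2 - 2\langle V - W,\, g(U)\rangle.
\]
Now I take expectations on both sides. Because $U$ is independent of $V$ and independent of $W$, the expectation of the inner product factors: $\E\langle V, g(U)\rangle = \langle \E[V], \E[g(U)]\rangle$ and likewise for $W$. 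Hence
\[
\E\langle V - W,\, g(U)\rangle = \langle \E[V] - \E[W],\, \E[g(U)]\rangle = -\langle \E[g(U)], \E[W] - \E[V]\rangle,
\]
which rearranges to give exactly \eqref{eq:three_point1}.

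For \eqref{eq:three_point2}, I would substitute $\E[V] = \E[W]$, which kills the last term in \eqref{eq:three_point1} and gives
\[
\E\|V - g(U)\|_2^2 = \E\|W - g(U)\|_2^2 + \E\|V\|_2^2 - \E\|W\|_2^2.
\]
Since $U$ and $V$ are i.i.d., $\E\|V\|_2^2 = \E\|U\|_2^2$, which yields the stated form.

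I do not foresee a genuine obstacle: the argument is a one-line expansion plus independence. The only thing to be careful about is writing the inner product rearrangement with the correct sign so that the final expression matches the ordering $\E[W] - \E[V]$ inside the bracket in \eqref{eq:three_point1}, and pointing out the swap from $\E\|V\|_2^2$ to $\E\|U\|_2^2$ at the end by the i.i.d.\ assumption. No integrability issue beyond what is implicit in the statement (the expectations in the identity must exist) needs to be addressed.
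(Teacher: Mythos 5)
Your proposal is correct and follows exactly the paper's own argument: expand the squared norms so the $\|g(U)\|_2^2$ terms cancel, factor the expected inner product using independence of $U$ from $V$ and $W$, and then specialize using $\E[V]=\E[W]$ and the i.i.d.\ assumption to replace $\E\|V\|_2^2$ by $\E\|U\|_2^2$. The paper states this more tersely but the reasoning is identical.
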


\begin{proof}
The first statement \eqref{eq:three_point1} just follows from expanding the 
quadratic terms and using the independence of $U,V,W$.  The second
statement \eqref{eq:three_point2} follows from the first by noting that if 
$U,V$ are i.i.d.\ and $\E[U] = \E[W]$ then $\E[V] = \E[W]$, thus the first term 
on the right-hand side in \eqref{eq:three_point1} is $\E\|U\|_2^2$, and the last
term is zero.    
\end{proof}

The statements in Proposition \ref{prop:three_point} are the result of somewhat
trivial algebraic manipulations.  Nonetheless, they are useful observations: to
recap, the second display \eqref{eq:three_point2} says that given a random
vector $U$, if we can generate another random vector $W$ that is independent of
$U$ and shares the same mean (importantly, we do \emph{not} require it to be
i.i.d.), then we can unbiasedly estimate the predicion error (or risk) of $g$
applied to $U$.  

This is the basis for the CB risk estimator.  By carefully adding and
substracting noise to $Y$, we generate a pair of random vectors
\smash{$(U,W)=(Y^\pb, Y^\mb)$} that are independent of each other and have a 
common mean $\theta$. Then we pivot slightly from the original problem and now
seek to estimate the risk of $g$ when it is applied to $U$, which has marginal 
distribution $N(\theta, (1+\alpha) \sigma^2)$.  For this task, we have a simple
unbiased estimator, following \eqref{eq:three_point2}.  

\begin{corollary}
\label{cor:cb_unbiased}
Let $Y \sim N(\theta, \sigma^2 I_n)$.  Then for any $g$, any $\alpha > 0$, and
any $B \geq 1$, the CB estimator defined by \eqref{eq:cb}, \eqref{eq:cb_risk} is
unbiased for the noise-elevated risk in \eqref{eq:risk_alpha}:
$\E[\CB_\alpha(g)] = \Risk_\alpha(g)$.  
\end{corollary}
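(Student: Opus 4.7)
The plan is to reduce the claim to a direct expectation computation of each summand in $\CB_\alpha(g)$. The cornerstone is showing that for each $b$, the pair $(Y^{*b}, Y^{\dagger b})$ consists of two \emph{independent} random vectors, each with mean $\theta$, but with different variances. The independence (rather than just uncorrelatedness) is available here because $(Y^{*b}, Y^{\dagger b})$ is jointly Gaussian: conditional on the noise sources, it is a fixed linear combination of $Y$ and $\omega^b$, both Gaussian. So the first step is to compute
$$
\Cov(Y^{*b}, Y^{\dagger b}) = \Cov(Y + \sqrt\alpha\, \omega^b,\; Y - \omega^b/\sqrt\alpha) = \sigma^2 I_n - \sigma^2 I_n = 0,
$$
where the cancellation is exactly what the scalings $\sqrt\alpha$ and $1/\sqrt\alpha$ were chosen to produce. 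Combined with joint Gaussianity, this gives independence; and $Y^{*b} \sim N(\theta, (1+\alpha)\sigma^2 I_n)$, $Y^{\dagger b} \sim N(\theta, (1+1/\alpha)\sigma^2 I_n)$.

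Next I would compute $\E\|Y^{\dagger b} - g(Y^{*b})\|_2^2$ by the usual bias--variance split: write
$$
Y^{\dagger b} - g(Y^{*b}) = (Y^{\dagger b} - \theta) + (\theta - g(Y^{*b})).
$$
By independence of $Y^{\dagger b}$ and $g(Y^{*b})$, together with $\E[Y^{\dagger b} - \theta] = 0$, the cross term has expectation zero, so
$$
\E\|Y^{\dagger b} - g(Y^{*b})\|_2^2 = \E\|Y^{\dagger b} - \theta\|_2^2 + \E\|\theta - g(Y^{*b})\|_2^2 = n(1+1/\alpha)\sigma^2 + \Risk_\alpha(g),
$$
using the definition \eqref{eq:risk_alpha} and the fact that $Y^{*b}$ has the same distribution as $Y_\alpha$. (Equivalently, one can derive this from Proposition \ref{prop:three_point}, taking $U = Y^{*b}$, $V$ an independent copy of $U$, and $W = Y^{\dagger b}$, in which case \eqref{eq:three_point1} immediately collapses to the same identity since $\E[V] = \E[W] = \theta$.)

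Finally, since $\E\|\omega^b\|_2^2/\alpha = n\sigma^2/\alpha$, subtracting the correction terms gives
$$
\E\big[\|Y^{\dagger b} - g(Y^{*b})\|_2^2 - \|\omega^b\|_2^2/\alpha\big] - n\sigma^2 = \Risk_\alpha(g) + n(1+1/\alpha)\sigma^2 - n\sigma^2/\alpha - n\sigma^2 = \Risk_\alpha(g),
$$
so every summand is unbiased for $\Risk_\alpha(g)$, and averaging over $b=1,\ldots,B$ preserves unbiasedness. There is really no hard step here; the only subtle point is the independence of $Y^{*b}$ and $Y^{\dagger b}$, which is the entire motivation for the particular $\sqrt\alpha$/$1/\sqrt\alpha$ coupling in \eqref{eq:cb}. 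Everything else is bookkeeping to verify that the $-\|\omega^b\|_2^2/\alpha$ and $-n\sigma^2$ terms exactly compensate for the variance of $Y^{\dagger b}$.
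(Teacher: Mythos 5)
Your proof is correct and takes essentially the same route as the paper: the crux in both is that the scalings $\sqrt\alpha$ and $1/\sqrt\alpha$ make $Y^{*b}$ and $Y^{\dagger b}$ uncorrelated, hence independent by joint Gaussianity, after which the claim is an expectation computation. The only cosmetic difference is that you verify the constant corrections by a direct bias--variance split, whereas the paper routes through Proposition \ref{prop:three_point} and then swaps $\|Y^{*b}\|_2^2 - \|Y^{\dagger b}\|_2^2$ for the equal-mean term $n\sigma^2\alpha - \|\omega^b\|_2^2/\alpha$ (a device it later exploits in Remark \ref{rem:cb_rvar} to discuss variance); your arithmetic checks out either way.
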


\begin{proof}
For each $b$, note that \smash{$Y^\pb, Y^\mb$} are independent since they are
jointly normal and uncorrelated:  
\begin{align*}
\Cov(Y + \sqrt\alpha \omega^b, Y - \omega^b / \sqrt\alpha) 
&= \Cov(Y, Y) + (\sqrt\alpha - 1/\sqrt\alpha) \Cov(Y, \omega^b) - 
\Cov(\omega^b, \omega^b) \\
&= n\sigma^2 + 0 - n\sigma^2 \\
&= 0.
\end{align*}
They also clearly have the same mean, thus we can apply \eqref{eq:three_point2}
with \smash{$U=Y^\pb, V = \tilde{Y}^\pb, W=Y^\mb$}, where
\smash{$\tilde{Y}^\pb$} is an independent copy of \smash{$Y^\pb$}. This shows
that 
\begin{equation}
\label{eq:cb_pred1}
\|Y^\mb - g(Y^\pb)\|^2_2 + \|Y^\pb\|_2^2 - \|Y^\mb\|_2^2 
\end{equation}
is unbiased for \smash{$\E\|\tilde{Y}^\pb - g(Y^\pb)\|_2^2$}. Now observe that
we can replace \smash{$\|Y^\pb\|_2^2 - \|Y^\mb\|_2^2$} in the above display by 
anything with the same expectation, $n\sigma^2(\alpha - 1/\alpha)$, and the
result will still be unbiased for \smash{$\E\|\tilde{Y}^\pb - g(Y^\pb)\|_2^2$}. 
One such option is  
\begin{equation}
\label{eq:cb_pred2}
\|Y^\mb - g(Y^\pb)\|^2_2 + n\sigma^2\alpha - \|\omega^b\|_2^2/\alpha,
\end{equation}
and thus, after subtracting off $n\sigma^2(1+\alpha)$, we learn that
$$
\|Y^\mb - g(Y^\pb)\|^2_2 - \|\omega^b\|_2^2/\alpha - n\sigma^2
$$
is unbiased for $\Risk_\alpha(g)$ in \eqref{eq:risk_alpha}.  The CB estimator in
\eqref{eq:cb_risk}, being an average of such terms over $b=1,\ldots,B$, is
therefore also unbiased for $\Risk_\alpha(g)$.   
\end{proof}
 
\begin{remark}
\label{rem:cb_unbiased}
In Proposition \ref{prop:three_point}, we require that $U,W$ are independent 
so that we can factorize $\E\langle g(U), W \rangle = \langle \E[g(U)], \E[W] 
\rangle$ in \eqref{eq:three_point1} and hence cancel out this term with
$\langle \E[g(U)], \E[V] \rangle$, when $\E[V] = \E[W]$, to achieve
\eqref{eq:three_point2}.  This is the only reason that we require a normal data
model $Y \sim N(\theta, \sigma^2 I_n)$ for the unbiasedness result in Corollary 
\ref{cor:cb_unbiased}; we can construct \smash{$U=Y^\pb,  W=Y^\mb$} to be 
uncorrelated, but it is only under normality that this will imply independence.        

When $g(Y) = SY$ is linear, if $U,W$ are merely uncorrelated then we still get
the desired factorization:   
$$
\E\langle SU, W \rangle = \E\tr(S U W^\T) = \tr(S \E[U W^\T]) = \tr(S \E[U]
\E[W]^\T) = \langle S\E[U], \E[W] \rangle,
$$
so the unbiasedness result in Corollary \ref{cor:cb_unbiased} still holds under
the weaker conditions: $\E[Y] = \theta, \Cov(Y) = \sigma^2 I_n$. As an example
consequence, this means that the CB estimator for ridge regression is still
unbiased for the noise-elevated risk even when the data is not Gaussian, but has 
isotropic error covariance.   
\end{remark}

\begin{remark}
\label{rem:cb_options}
As alluded to in the proof of the proposition, various options are available in
the construction of the CB estimator; starting from \eqref{eq:cb_pred1}, we can
replace two rightmost terms by anything that has the same mean.  One might
wonder why we therefore do not just use the exact mean itself, $n\sigma^2(\alpha
- 1/\alpha)$, to define the risk estimator; as we discuss later (see Remark
\ref{rem:cb_rvar} after Proposition \ref{prop:cb_ivar}), this not a good choice,
as it would lead to a much larger variance for the risk estimator when $\alpha$
is small.        
\end{remark}

\subsection{Smoothness of noise-elevated target} 

Now that we have shown that $\CB_\alpha(g)$ is unbiased for $\Risk_\alpha(g)$,
it is natural to ask whether $\Risk_\alpha(g)$ will generally be close to the
original target of interest $\Risk(g)$.  Our next result provides a basic answer
to this question: we show that if $g$ satisfies a certain moment condition, then
the map $\alpha \mapsto \Risk_\alpha(g)$ is continuous on an interval containing
$\alpha = 0$.  In fact, if $g$ satisfies a certain $k$th order moment condition,
then this map is $k$ times continuously differentiable around $\alpha = 0$.

\begin{proposition}
\label{prop:risk_alpha_smoothness}
For $\alpha \geq 0$, let $\Risk_\alpha(g)$ be as defined in
\eqref{eq:risk_alpha}. If, for some $\beta > 0$ and integer $k \geq 0$,   
$$
\E \big[\|g(Y_\beta)\|_2^2 \|Y_\beta - \theta\|_2^{2m} \big] < \infty,
\quad m=0,\ldots,k,
$$
where recall $Y_\alpha \sim N(\theta, (1+\alpha) \sigma^2 I_n)$, then the map
$\alpha \mapsto \Risk_\alpha(g)$ has $k$ continuous derivatives on $[0, \beta)$.
\end{proposition}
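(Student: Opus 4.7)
\emph{Proof plan.} The strategy is to reduce the proposition to a differentiation-under-the-integral argument by absorbing all $\alpha$-dependence into the Gaussian density. Let $p_\alpha$ denote the density of $N(0, (1+\alpha)\sigma^2 I_n)$, so that
\begin{equation*}
\Risk_\alpha(g) = \int_{\R^n} \|g(y) - \theta\|_2^2 \, p_\alpha(y-\theta) \, dy.
\end{equation*}
Using $\log p_\alpha(u) = -\tfrac{n}{2}\log\!\bigl(2\pi(1+\alpha)\sigma^2\bigr) - \|u\|_2^2/[2(1+\alpha)\sigma^2]$, a short induction on $j$ (via the product rule) shows that for $j=0,1,\ldots,k$,
\begin{equation*}
\frac{\partial^j p_\alpha(u)}{\partial \alpha^j} = p_\alpha(u) \cdot Q_j\!\left( \|u\|_2^2 / \sigma^2 , \; \alpha \right),
\end{equation*}
where $Q_j(t, \alpha)$ is a polynomial of degree $j$ in $t$ whose coefficients are rational functions of $(1+\alpha)$, and in particular smooth in $\alpha$ on $[0, \beta)$.

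The remaining work is to legitimize differentiating the integral $k$ times and to verify continuity of the $k$th derivative. For an arbitrary $\alpha_0 \in [0, \beta)$, I would pick some $\beta' \in (\alpha_0, \beta)$ and work on the compact interval $[0, \beta']$. The key pointwise bound is
\begin{equation*}
\frac{p_\alpha(u)}{p_\beta(u)} = \left(\frac{1+\beta}{1+\alpha}\right)^{n/2} \exp\!\left( - \frac{\|u\|_2^2}{2\sigma^2} \Big(\frac{1}{1+\alpha} - \frac{1}{1+\beta}\Big) \right) \leq (1+\beta)^{n/2},
\end{equation*}
which holds for all $u \in \R^n$ and all $\alpha \in [0, \beta']$, since the exponent is nonpositive whenever $\alpha < \beta$. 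Combined with a uniform bound $|Q_j(\|u\|_2^2/\sigma^2, \alpha)| \leq C(1 + \|u\|_2^{2k})$ on $[0, \beta']$ (valid because the coefficients of $Q_j$ are continuous in $\alpha$ on this compact set), this yields an $\alpha$-free dominating function:
\begin{equation*}
\bigg| \|g(y) - \theta\|_2^2 \, \frac{\partial^j p_\alpha(y-\theta)}{\partial \alpha^j} \bigg| \leq C'\, \|g(y) - \theta\|_2^2 \, \bigl(1 + \|y-\theta\|_2^{2k}\bigr) \, p_\beta(y-\theta).
\end{equation*}
Splitting $\|g(y) - \theta\|_2^2 \leq 2\|g(y)\|_2^2 + 2\|\theta\|_2^2$, the hypothesized moment bounds at $\alpha = \beta$, together with finiteness of all Gaussian moments of $\|Y_\beta - \theta\|_2$, imply this bound is integrable. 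Dominated convergence then permits differentiating under the integral sign $k$ times and also delivers continuity of the $k$th derivative at $\alpha_0$; since $\alpha_0 \in [0, \beta)$ was arbitrary, the conclusion follows.

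The main technical obstacle is that the moment hypothesis is placed only at the endpoint $\alpha = \beta$, while the conclusion concerns smoothness on all of $[0, \beta)$. This is precisely what the Gaussian-density comparison above addresses: the bound $p_\alpha(u) \leq (1+\beta)^{n/2} p_\beta(u)$ requires the strict inequality $\alpha < \beta$, which is why the interval in the conclusion must be open at $\beta$ rather than closed. Once this comparison is in place, all remaining steps are routine polynomial bookkeeping and a standard dominated-convergence invocation.
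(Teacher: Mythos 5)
Your proof is correct and follows essentially the same route as the paper's: the paper's Lemma \ref{lem:f_alpha_smoothness} likewise writes $\Risk_\alpha(g)$ as an integral against the $N(\theta,(1+\alpha)\sigma^2 I_n)$ density, observes that each $\alpha$-derivative of that density is the density times a polynomial in $\|y-\theta\|_2^2$, and invokes the Leibniz rule plus dominated convergence with a dominating function made integrable by the moment hypothesis at level $\beta$. Your version is, if anything, slightly more explicit about the density-comparison bound $p_\alpha(u)\leq (1+\beta)^{n/2}p_\beta(u)$ for $\alpha<\beta$, which the paper leaves implicit in its claim that the dominating function is ``integrable by assumption.''
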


The proof is not conceptually difficult but a bit technical and deferred to
Appendix \ref{app:risk_alpha_smoothness}.  It is worth noting that Proposition
\ref{prop:risk_alpha_smoothness} shows $\Risk_\alpha(g)$ is continuous in
$\alpha$ under only a moment condition, and not a continuity condition, on $g$.
Intuitively, it is reasonable to expect that continuity of $g$ would not be
needed, as evaluating the risk of $g$ at the elevated noise level $(1+\alpha) 
\sigma^2$ is akin to mollifying $g$, i.e., convolving it with a Gaussian kernel
of bandwidth $\alpha \sigma^2$, which renders the result smooth even if $g$ was
nonsmooth to begin with.

\section{Noiseless limit}
\label{sec:noiseless_limit}

Here we study the \emph{infinite-bootstrap} version of the CB estimator, 
\smash{$\CB_\alpha^\infty(g) = \lim_{B \to \infty} \CB_\alpha(g)$}.
Equivalently (by the law of large numbers), we can define this via an 
expectation over $\omega$, \smash{$\CB_\alpha^\infty(g) = \E[\CB_\alpha(g) \,|\,
  Y]$}, i.e.,
\begin{equation}
\label{eq:cb_risk_inf}
\CB_\alpha^\infty(g) = \E\big[ \|Y^\dagger - g(Y^*)\|_2^2 - \|\omega\|_2^2 /   
\alpha \,\big|\, Y \big] - n\sigma^2.
\end{equation}
where $\omega,Y^*, Y^\dagger$ denote a triplet sampled as in \eqref{eq:cb}.   
Adding and subtract $Y$ in the first quadratic term, and expanding, we get 
\begin{equation}
\label{eq:cb_risk_inf_expanded}
\CB_\alpha^\infty(g) = \E\big[ \|Y - g(Y + \sqrt\alpha \omega)\|_2^2 \,\big|\, Y
\big] + \frac{2}{\sqrt\alpha} \E\big[ \langle \omega, g(Y + \sqrt\alpha
\omega) \rangle \,\big|\, Y \big] - n\sigma^2,  
\end{equation}
where we used the fact that the inner product of $\omega$ and $Y$ has zero
conditional expectation.  

Our particular interest in this section is the behavior of
\smash{$\CB_\alpha^\infty(g)$} as $\alpha \to 0$, which we call the
\emph{noiseless limit} (referring here to the amount of auxiliary noise).  The
key is the middle term in \eqref{eq:cb_risk_inf_expanded}.  Under a moment
condition on $g$, the first term will converge the observed training error $\|Y
- g(Y)\|_2^2$, by an argument similar to that used for Proposition
\ref{prop:risk_alpha_smoothness}.  As for the middle term in
\eqref{eq:cb_risk_inf_expanded}, \citet{ramani2008monte} show that if $g$ 
admits a well-defined second-order Taylor expansion, then this same term
converges to a (scaled) divergence evaluated at $Y$: $2\sigma^2 (\nabla \cdot 
g)(Y)$.  Note that, in this case, the limit of \smash{$\CB^\infty_\alpha(g)$} as
$\alpha \to 0$ is precisely SURE in \eqref{eq:sure}. 

In fact, as Ramani et al.\ also note, the middle term in
\eqref{eq:cb_risk_inf_expanded} converges to $2\sigma^2 (\nabla \cdot g)(Y)$
even if $g$ is only weakly differentiable.  (They do not consider this extended 
case in their main paper, and refer to an online supplement for details.)  For 
completeness, we give a self-contained proof of our next result in Appendix
\ref{app:cb_noiseless}. 

\begin{theorem}
\label{thm:cb_noiseless} 
Assume the conditions of Theorem \ref{thm:sure} (Stein's result), but with the
moment conditions holding at an elevated noise level: $\E\|g(Y_\beta)\|_2^2 <
\infty$ and $\E|\nabla_i g_i(Y_\beta)| < \infty$, for $i=1,\ldots,n$, and some
$\beta > 0$.  Then the infinite-bootstrap version \eqref{eq:cb_risk_inf} of the
CB estimator (equivalently, the formulation in \eqref{eq:cb_risk_inf_expanded})
satisfies  
\begin{equation}
\label{eq:cb_noiseless}
\lim_{\alpha \to 0} \CB_\alpha^\infty(g) = \|Y - g(Y)\|^2_2 + 2 \sigma^2 (\nabla
\cdot g)(Y) = \SURE(g), \quad \text{almost surely}.
\end{equation}
Therefore, by Stein's result, the noiseless limit of
\smash{$\CB_\alpha^\infty(g)$} is unbiased for $\Risk(g)$.    
\end{theorem}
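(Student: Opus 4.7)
The plan is to analyze the expanded form \eqref{eq:cb_risk_inf_expanded} term by term, showing that each conditional expectation over $\omega$ (given $Y$) converges almost surely to the corresponding component of $\SURE(g)$. The two main tools are Gaussian integration by parts (Stein's lemma) to neutralize the $1/\sqrt\alpha$ factor in the middle term, and the pointwise a.e.\ convergence of Gaussian mollifications to the underlying function.

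First I would apply Stein's identity to the middle term of \eqref{eq:cb_risk_inf_expanded}. Conditional on $Y$, the coordinates of $\omega \sim N(0,\sigma^2 I_n)$ are independent. For each $i$, the map $w \mapsto g_i(Y + \sqrt\alpha\, w)$ is weakly differentiable with weak partial derivative in $w_i$ equal to $\sqrt\alpha\,\nabla_i g_i(Y + \sqrt\alpha\, w)$, and the moment assumption $\E|\nabla_i g_i(Y_\beta)| < \infty$ supplies the integrability needed to apply the Gaussian integration-by-parts identity, giving
\[
\E[\omega_i\, g_i(Y + \sqrt\alpha\, \omega) \mid Y] \;=\; \sigma^2 \sqrt\alpha\, \E[\nabla_i g_i(Y + \sqrt\alpha\, \omega) \mid Y].
\]
Summing over $i$ and multiplying by $2/\sqrt\alpha$ shows that the middle term of \eqref{eq:cb_risk_inf_expanded} equals $2\sigma^2 \E[(\nabla \cdot g)(Y + \sqrt\alpha\, \omega) \mid Y]$, so the troublesome $1/\sqrt\alpha$ has been cancelled.

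Next I would pass to the limit $\alpha \to 0$ in the two surviving conditional expectations. Each has the form $(f * \psi_\alpha)(Y)$, where $\psi_\alpha$ is the density of $N(0, \alpha\sigma^2 I_n)$: expanding $\|Y - g(Y + \sqrt\alpha\, \omega)\|_2^2$ produces Gaussian convolutions of $g$ and of $\|g\|_2^2$ (together with the constant $\|Y\|_2^2$), while the rewritten middle term is a Gaussian convolution of $\nabla \cdot g$. The elevated-noise moment hypotheses imply that each of these functions lies in $L^1_{\mathrm{loc}}(\R^n)$, since the Gaussian density with variance $(1+\beta)\sigma^2$ is bounded below on every compact. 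The standard approximate-identity theorem then gives $(f * \psi_\alpha)(y) \to f(y)$ at every Lebesgue point of $f$, and Lebesgue-a.e.\ $y$ is such a point. Because $Y$ has a strictly positive density, Lebesgue-a.e.\ convergence upgrades to $Y$-almost sure convergence.

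Combining the limits yields $\lim_{\alpha \to 0} \CB_\alpha^\infty(g) = \|Y - g(Y)\|_2^2 + 2\sigma^2 (\nabla \cdot g)(Y) - n\sigma^2 = \SURE(g)$ almost surely, and the unbiasedness assertion $\E[\SURE(g)] = \Risk(g)$ is just Theorem~\ref{thm:sure}. The main technical obstacle is justifying Gaussian integration by parts for a $g$ that is only weakly (not classically) differentiable: the natural remedy is to approximate $g$ coordinate-wise by its own smooth mollifications $g^{(\epsilon)}$, apply the classical identity to $g^{(\epsilon)}$, and remove $\epsilon \to 0$ via dominated convergence, where the integrability of $\nabla \cdot g$ at noise level $(1+\beta)\sigma^2$ supplies a dominating envelope. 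A secondary concern is that expanding the first term produces $\|y - g(\cdot)\|_2^2$ with $y$ fixed; its local integrability for a.e.\ $y$ follows at once from $\|y - g(z)\|_2^2 \le 2\|y\|_2^2 + 2\|g(z)\|_2^2$ and local integrability of $\|g\|_2^2$.
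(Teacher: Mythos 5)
Your proposal follows the same route as the paper's proof: Gaussian integration by parts on the middle term of \eqref{eq:cb_risk_inf_expanded} to absorb the $1/\sqrt\alpha$ factor and turn it into a Gaussian mollification of $\nabla\cdot g$, followed by an approximate-identity argument giving almost-everywhere convergence of the mollifications. The paper implements the first step by truncating the Gaussian test function with compactly supported smooth cutoffs so that the definition of weak differentiability applies directly (its Lemma \ref{lem:int_by_parts}), whereas you mollify $g$ itself and pass to the limit; both are workable, though your version additionally requires a bound on the mollified derivatives that is uniform in the mollification parameter, which takes slightly more care than a bare appeal to integrability of $\nabla\cdot g$ at noise level $(1+\beta)\sigma^2$.

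The one step that would fail as literally written is the appeal to ``the standard approximate-identity theorem'' for functions that are merely in $L^1_{\mathrm{loc}}(\R^n)$. For a kernel without compact support, such as the Gaussian, local integrability of $f$ is not enough: the convolution $(f*\psi_\alpha)(y)$ need not even be finite for $f\in L^1_{\mathrm{loc}}$ (consider $f(x)=e^{\|x\|_2^4}$), and even when finite the tail contribution $\int_{\|t\|_2>\delta}$ need not vanish. What is actually required --- and what your elevated-noise moment hypotheses do supply --- is that $f\,\phi_\beta\in L^1(\R^n)$, where $\phi_\beta$ is the Gaussian density at the inflated noise level; this is precisely the hypothesis under which the paper proves its dedicated Gaussian Lebesgue-point lemma (Lemma \ref{lem:lebesgue_point}, adapted from Stein and Weiss), whose proof splits the convolution into a near region controlled by the Lebesgue-point property and a far region controlled by dominated convergence using $f\,\phi_\beta\in L^1$. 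So the gap is one of invoking the wrong sufficient condition rather than a missing idea: replace ``$f\in L^1_{\mathrm{loc}}$'' by ``$f\,\phi_\beta\in L^1$'' and verify the latter from your moment assumptions, and the argument closes.
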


\begin{remark}
\label{rem:weak_diff}
Recall that a real-valued function $f : \R^n \to \R$ is called weakly
differentiable, with weak partial derivatives $\nabla_i f$, $i=1,\ldots,n$,
provided that for each compactly supported and continuously differentiable test  
function $\phi : \R^n \to \R$, it holds that   
\begin{equation}
\label{eq:weak_diff}
\int f(x) \nabla_i \phi(x) \, dx = -\int \nabla_i f(x) \phi(x) \, dx, \quad
i=1,\ldots,n. 
\end{equation}
Equivalently (e.g., Theorem 4.21 of \citet{evans2015measure}), a real-valued
function is weakly differentiable if it is absolutely continuous on almost every
line segment parallel to the coordinate axes. 

Meanwhile, a vector-valued function $g : \R^n \to \R^n$ is called weakly
differentiable if each of its component functions $g_i$, $i=1,\ldots,n$ are.
Equivalently, by the aforementioned ``absolute continuity on lines'' formulation
of weak differentiability, this means that for each $i=1,\ldots,n$ and
$j=1,\ldots,n$,
$$
y_i \mapsto g_j(y) \;\, \text{is absolutely continuous on compact subsets of
  $\R$, for almost every $y_{-i} \in \R^{n-1}$},
$$
where $y_{-i}$ denotes the vector $y$ with the $i$th component removed.  This is
a stronger condition than what is really required in Theorems \ref{thm:sure} or
\ref{thm:cb_noiseless}.  Each result in fact only requires that for each
$i=1,\ldots,n$,   
$$
y_i \mapsto g_i(y) \;\, \text{is absolutely continuous on compact subsets of
  $\R$, for almost every $y_{-i} \in \R^{n-1}$}.
$$
Effectively, each component function $g_i$ only needs to be weakly
differentiable with respect to the $i$th variable (not all of the other
variables), for almost every choice of $y_{-i} \in \R^{n-1}$.  While this is
technically weaker than weak differentiability, it is also harder to explain,
and not clear whether this distinction is all that meaningful.  For simplicity,
we thus state the assumption as weak differentiability of $g$ in both Theorems
\ref{thm:sure} and \ref{thm:cb_noiseless}. 
\end{remark}

\begin{remark}
The limiting result in \eqref{eq:cb_noiseless} also holds for the
infinite-bootstrap version of the BY estimator in \eqref{eq:by_risk}, which is
the estimator studied in \citet{ramani2008monte} (as we mentioned in Section
\ref{sec:related_work}, these authors seemed to be unaware of the prior work of
Breiman and Ye, and independently proposed the same estimator). The
infinite-bootstrap formulation of the BY estimator, \smash{$\BY_\alpha^\infty(g) 
  = \E[\BY_\alpha(g) \,|\, Y]$}, can be expressed as
\begin{equation}
\label{eq:by_risk_inf_expanded}
\BY_\alpha^\infty(g) = \|Y - g(Y)\|_2^2 + \frac{2}{\sqrt\alpha} \E\big[
\langle \omega, g(Y + \sqrt\alpha \omega) \rangle \,\big|\, Y \big] - n\sigma^2, 
\end{equation}
which is very similar to the analogous representation
\eqref{eq:cb_risk_inf_expanded} for the infinite-bootstrap CB estimator. From
this $B=\infty$ perspective, the only difference between the estimators 
\eqref{eq:cb_risk_inf_expanded} and \eqref{eq:by_risk_inf_expanded} is the first
term; and as $\alpha \to 0$, the first term in \eqref{eq:cb_risk_inf_expanded}
will converge to that in \eqref{eq:by_risk_inf_expanded} if $\E\|g(Y)\|_2^2 <
\infty$ (even for nonsmooth $g$).
\end{remark}

\begin{remark}
The fact that SURE can be recreated as a certain limiting case of the CB
estimator is of course reassuring, as the former is one of the most celebrated
and well-studied ideas in risk estimation in the normal means model. When $g$ is
weakly differentiable but its divergence is not analytically tractable, using
the CB estimator for small but nonzero $\alpha > 0$ (which can be seen as
employing a particular numerical approximation scheme for the divergence) is
appealing, because we know that it should behave similarly to SURE for small
enough $\alpha > 0$; and yet for any $\alpha > 0$, we know that it targets the
noise-elevated risk $\Risk_\alpha(g)$.
\end{remark}

\begin{remark}
\label{rem:cb_noiseless_ht}
What happens in the limit when $g$ is not weakly differentiability? While it
still may be the case that the noiseless limit of the infinite-bootstrap CB
estimator is SURE, it will no longer be generally true that this noiseless limit
is unbiased for $\Risk(g)$, because the unbiasedness of SURE itself requires
weak differentiability of $g$. (Of course, the same can be said about the
infinite-bootstrap BY estimator, since, as the above remark explains, it has the
same limit as $\alpha \to 0$.)

As a simple example, consider the hard-thresholding estimator $g$, which is
discontinuous (and not weakly differentiable), with component functions $g_i(Y)
= Y_i \cdot 1\{|Y_i| \geq t\}$, $i=1,\ldots,n$, for some fixed $t > 0$. In this 
case, we can check by direct computation (see Appendix
\ref{app:cb_noiseless_ht}) that            
\begin{equation}
\label{eq:cb_noiseless_ht}
\lim_{\alpha \to 0} \, \frac{2}{\sqrt\alpha} \E\big[ \langle \omega, g(y +
\sqrt\alpha \omega) \rangle \big] = 2\sigma^2 \sum_{i=1}^n 1\{|y_i| \geq t\}, 
\quad \text{for almost every $y \in \R^n$}.
\end{equation}
The right-hand side is again the (scaled) divergence of $g$ evaluated at $y$,
which is well-defined for almost every $y$; however, it is known that the
divergence does \emph{not} lead to an unbiased estimate of risk for
hard-thresholding, due to the discontinuous nature of this estimator; see, e.g.,
\citet{tibshirani2015degrees}.       
\end{remark}

\section{Bias and variance}
\label{sec:bias_variance}

In this section, we analyze a bias-variance decomposition of the mean squared 
error of $\CB_\alpha(g)$ in \eqref{eq:cb_risk}, when we measure its error to the
\emph{original} risk $\Risk(g)$.  For any estimator \smash{$\hat{R}(g)$} of
$\Risk(g)$, recall:  
$$
\E[\hat{R}(g) - \Risk(g)]^2 = 
\underbrace{\big[\E[\hat{R}(g)] - \Risk(g)\big]^2}_{\Bias^2(\hat{R}(g))} \,+\,   
\underbrace{\E\big[\hat{R}(g) - \E[\hat{R}(g)]\big]^2}_{\Var(\hat{R}(g))}. 
$$
Applying this decomposition to the CB estimator $\CB_\alpha(g)$, we get:   
\begin{equation}
\label{eq:cb_bias_var}
\E[\CB_\alpha(g) - \Risk(g)]^2 = 
\underbrace{\big[\Risk_\alpha(g) - \Risk(g)\big]^2}_{\Bias^2(\CB_\alpha(g))}
\,+\,
 \underbrace{\E\big[\Var(\CB_\alpha(g) \,|\, Y)\big]}_{\RVar(\CB_\alpha(g))}
 \,+\, 
\underbrace{\Var\big(\E[\CB_\alpha(g) \,|\, Y]\big)}_{\IVar(\CB_\alpha(g))}.
\end{equation}
Here, for the bias term, we used the fact that $\CB_\alpha(g)$ is unbiased for
the noise-elevated risk $\Risk_\alpha(g)$ from Corollary \ref{cor:cb_unbiased};
and for the variance term, we used the law of total variance, and denote the two
terms that fall out by $\RVar(\CB_\alpha(g))$ (expectation of the conditional 
variance) and $\IVar(\CB_\alpha(g))$ (variance of the conditional expectation),
which we will call the \emph{reducible} and \emph{irreducible} variance of
$\CB_\alpha(g)$, respectively.  This is meant to reflect the effect of the
number of bootstrap draws $B$: the reducible variance will shrink as $B$ grows,
but the irreducible variance does not depend depend on $B$ at all, and in fact,
it can be viewed as the variance of the infinite-bootstrap version of the risk
estimator, \smash{$\CB_\alpha^\infty(g) = \E[\CB_\alpha(g) \,|\, Y]$}.

The goal of this section is to develop a precise understanding of how the
individual terms in \eqref{eq:cb_bias_var} behave as a function of $\alpha$ and
$B$, with a particular focus on small $\alpha$ and large $B$. As usual, we assume
throughout that $Y \sim N(\theta, \sigma^2 I_n)$, and recall we denote $Y_\alpha
\sim N(\theta, (1+\alpha) \sigma^2 I_n)$ for $\alpha \geq 0$.    

\subsection{Bias}

The next result provides an exact expression for $\Bias(\CB_\alpha(g)) = 
\Risk_\alpha(g) - \Risk(g)$, and some bounds for its magnitude.   

\begin{proposition}
\label{prop:cb_bias}
Assume $\E[\|g(Y_\beta)\|_2^2 \|Y_\beta - \theta\|_2^{2m}] < \infty$ for 
$m=0,1$ and some $\beta > 0$.  Then for all $\alpha \in [0, \beta)$, 
\begin{equation}
\label{eq:cb_bias}
\Risk_\alpha(g) - \Risk(g) = \int_0^\alpha \frac{\sqrt{n}}{\sqrt{2}(1+t)}
\sqrt{\Var(\|\theta - g(Y_t)\|_2^2)} \, \Cor\big(\|\theta - g(Y_t)\|_2^2,    
\|Y_t - \theta\|_2^2\big) \, dt. 
\end{equation}
If $\Var(\|\theta - g(Y_t)\|_2^2)$ is increasing with $t$ on $[0, \alpha]$, then
a simple upper bound is  
\begin{equation}
\label{eq:cb_bias_bd1}
|\Risk_\alpha(g) - \Risk(g)| \leq \frac{\sqrt{n}\alpha}{\sqrt{2}}
\sqrt{\Var(\|\theta - g(Y_\alpha)\|_2^2)}.
\end{equation}
If in addition $\E[\|g(Y_\beta)\|_2^4 \|Y_\beta - \theta\|_2^{2m}] < \infty$
for $m=0,1$, then for all $\alpha \in [0, \beta)$, 
\begin{equation}
\label{eq:cb_bias_bd2}
|\Risk_\alpha(g) - \Risk(g)| \leq \frac{\sqrt{n}\alpha}{\sqrt{2}}
\sqrt{\Var(\|\theta - g(Y)\|_2^2)} + O(\alpha^{3/2}),
\end{equation}
where here and throughout, we use the asymptotic notation $f(\alpha) =
O(h(\alpha))$ to mean that there is a constant $C>0$ such that $f(\alpha) \leq C
h(\alpha)$ for small enough $\alpha$.  
\end{proposition}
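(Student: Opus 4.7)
The strategy I would adopt is to express the bias as an integral via the fundamental theorem of calculus. Under the stated moment hypothesis, Proposition \ref{prop:risk_alpha_smoothness} (with $k=1$) gives that $t \mapsto \Risk_t(g)$ is continuously differentiable on $[0,\beta)$, so
\[
\Risk_\alpha(g) - \Risk(g) = \int_0^\alpha \frac{d}{dt}\Risk_t(g)\, dt.
\]
To identify the integrand, I would differentiate under the integral by exploiting the log-derivative of the density $p_t$ of $Y_t \sim N(\theta, (1+t)\sigma^2 I_n)$:
\[
\frac{\partial}{\partial t}\log p_t(y) = -\frac{n}{2(1+t)} + \frac{\|y-\theta\|_2^2}{2(1+t)^2\sigma^2}.
\]
Integrating $\|\theta - g(y)\|_2^2$ against $\partial_t p_t = p_t\cdot \partial_t \log p_t$, and using $\E\|Y_t-\theta\|_2^2 = n(1+t)\sigma^2$ to recognize that the constant term recenters the quadratic factor, I obtain
\[
\frac{d}{dt}\Risk_t(g) = \frac{1}{2(1+t)^2\sigma^2}\,\Cov\big(\|\theta-g(Y_t)\|_2^2,\,\|Y_t-\theta\|_2^2\big).
\]

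Formula \eqref{eq:cb_bias} then follows by rewriting the covariance as a correlation. Since $\|Y_t-\theta\|_2^2/((1+t)\sigma^2)\sim\chi^2_n$, we have $\Var(\|Y_t-\theta\|_2^2) = 2n(1+t)^2\sigma^4$, so $\sqrt{\Var(\|Y_t-\theta\|_2^2)} = \sqrt{2n}(1+t)\sigma^2$, and the prefactor collapses to $\sqrt n/(\sqrt 2(1+t))$ exactly as claimed. The simple bound \eqref{eq:cb_bias_bd1} then falls out by bounding $|\Cor|\leq 1$, using the assumed monotonicity of $t\mapsto \Var(\|\theta-g(Y_t)\|_2^2)$ to factor out its value at $\alpha$ from the integral, and using $\int_0^\alpha dt/(1+t) = \log(1+\alpha)\leq \alpha$.

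For the refined bound \eqref{eq:cb_bias_bd2}, the extra fourth-moment hypothesis is exactly what is needed to make $\E\|g(Y_t)\|_2^4$ and $\Risk_t(g)$ both continuously differentiable at $t=0$ via a second application of the mechanism behind Proposition \ref{prop:risk_alpha_smoothness}; hence $v(t) := \Var(\|\theta-g(Y_t)\|_2^2)$ is locally Lipschitz near $t=0$. Using the elementary inequality $\sqrt{v(t)} \leq \sqrt{v(0)} + \sqrt{|v(t)-v(0)|} \leq \sqrt{v(0)} + O(\sqrt t)$ and plugging into the integral representation yields
\[
|\Risk_\alpha(g) - \Risk(g)| \leq \int_0^\alpha \frac{\sqrt n}{\sqrt 2(1+t)}\Big(\sqrt{v(0)} + O(\sqrt t)\Big)dt \leq \frac{\sqrt n\,\alpha}{\sqrt 2}\sqrt{v(0)} + O(\alpha^{3/2}),
\]
which is the stated bound.

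The main obstacle is the differentiation under the integral sign in the very first step: one must dominate $\|\theta - g(y)\|_2^2 \cdot |\partial_t p_t(y)|$ uniformly in $t$ over compact subsets of $[0,\beta)$ by a function whose integral against $p_\beta$ is finite. This is precisely where the assumption $\E[\|g(Y_\beta)\|_2^2\|Y_\beta-\theta\|_2^{2m}]<\infty$ for $m=0,1$ is used, and the cleanest route is to change measure from $p_t$ to $p_\beta$ (with bounded likelihood ratio on compact intervals of $t$) and apply dominated convergence, or to appeal directly to the proof of Proposition \ref{prop:risk_alpha_smoothness}, which already encapsulates this argument.
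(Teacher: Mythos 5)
Your proposal is correct and follows essentially the same route as the paper: differentiate $t \mapsto \Risk_t(g)$ under the integral (justified by the same domination argument as in Proposition \ref{prop:risk_alpha_smoothness}), recognize the derivative as a covariance with $\|Y_t-\theta\|_2^2$ rescaled to a correlation via the $\chi^2_n$ moments, apply the fundamental theorem of calculus, and then obtain the two bounds by $|\Cor|\le 1$ with monotonicity, and by local Lipschitzness of $t\mapsto\Var(\|\theta-g(Y_t)\|_2^2)$ together with $\sqrt{a+b}\le\sqrt a+\sqrt b$. No gaps.
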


The proof of Proposition \ref{prop:cb_bias} is deferred to Appendix
\ref{app:bias_var}.  The upper bound in \eqref{eq:cb_bias_bd1} shows that the
absolute bias has a near-linear decay with $\alpha$, where ``near'' reflects
that  \smash{$\Var(\|\theta - g(Y_\alpha)\|_2^2)$} also depends on $\alpha$.
Under additional moment conditions on $g$, we see from \eqref{eq:cb_bias_bd2}
that the bias indeed decays linearly with $\alpha$.  Empirical examples that
assess the bias bounds from Proposition \ref{prop:cb_bias} are given in Appendix 
\ref{app:additional_experiments}.

\begin{remark}
\label{rem:cb_bias_rel}
With regard to the bound in \eqref{eq:cb_bias_bd2}, observe that   
\begin{equation}
\label{eq:var_jensen}
\sqrt{\Var(\|\theta - g(Y)\|_2^2)} \leq \sqrt{\E\|\theta - g(Y)\|_2^4} \leq
\Risk(g), 
\end{equation}
with the last step holding by Jensen's inequality, and thus to leading order,
we can interpret \eqref{eq:cb_bias_bd2} as providing for us an upper bound on
the \emph{relative} bias: 
\begin{equation}
\label{eq:cb_bias_bd3}
\frac{|\Risk_\alpha(g) - \Risk(g)|}{\Risk(g)} \lesssim \frac{\sqrt{n}
  \alpha}{\sqrt{2}}, 
\end{equation}
where $\lesssim$ means that we omit all terms with a lower-order dependence on 
$\alpha$.  This suggests that to achieve a relative bias of $x$, we
should choose \smash{$\alpha = \sqrt{2}x/\sqrt{n}$} (e.g., for $x = 10\%$, we
set \smash{$\alpha \approx 14/\sqrt{n}$}). 

We remark that \eqref{eq:cb_bias_bd3} will be often conservative in
practice. This is due to of looseness in the inequality
\smash{$\sqrt{\Var(\|\theta - g(Y)\|_2^2)} \leq \Risk(g)$} derived in
\eqref{eq:var_jensen}, and looseness in the bound $\Cor(\|\theta - g(Y_t)\|_2^2,
\|Y_t - \theta\|_2^2) \leq 1$ used to derive \eqref{eq:cb_bias_bd1},
\eqref{eq:cb_bias_bd2}. For example, when $g(Y) = SY$ and $S$ projects onto a 
$p$-dimensional linear suspace (as in linear regression), one can check that 
$$
\sqrt{\Var(\|\theta - g(Y)\|_2^2)} = \sqrt{2p}\sigma^2 \ll 
\|\theta - S\theta\|_2^2 + p\sigma^2 = \Risk(g) \quad 
\text{when $p \ll n$ (or $\theta$ is far from $S\theta$)},
$$
and
$$
\Cor\big(\|\theta - g(Y_t)\|_2^2, \|Y_t - \theta\|_2^2\big) = \sqrt{p/n} \ll 1  
\quad \text{when $p \ll n$}.
$$
\end{remark}

\subsection{Reducible variance}

The next result gives a simple bound on the reducible variance
$\RVar(\CB_\alpha(g))$. 

\begin{proposition}
\label{prop:cb_rvar}
Assume $\E\|g(Y_\beta)\|_2^4 < \infty$ for some $\beta > 0$.  Then for all
$\alpha \in (0, \beta)$, 
\begin{equation}
\label{eq:cb_rvar_bd}
\RVar(\CB_\alpha(g)) = \frac{4\sigma^2}{B\alpha} \E\|Y - g(Y)\|_2^2 +
O\bigg(\frac{1}{B\sqrt\alpha}\bigg).   
\end{equation}
\end{proposition}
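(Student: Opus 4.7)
The plan is to reduce everything to a single-draw calculation and then peel off the leading $1/\alpha$ contribution. By construction the $B$ summands of $\CB_\alpha(g)$ are i.i.d.\ conditional on $Y$, so
$$
\RVar(\CB_\alpha(g)) = \frac{1}{B}\,\E\big[\Var(Z \,|\, Y)\big], \qquad
Z = \|Y^\dagger - g(Y^*)\|_2^2 - \|\omega\|_2^2/\alpha,
$$
where $(\omega, Y^*, Y^\dagger)$ is a single triplet from \eqref{eq:cb}. Using $Y^\dagger = Y - \omega/\sqrt\alpha$, a direct expansion cancels $\|\omega\|_2^2/\alpha$ and produces the clean identity
$$
Z = \|Y - g(Y^*)\|_2^2 - \frac{2}{\sqrt\alpha}\,\langle \omega,\, Y - g(Y^*) \rangle \;=\; A - \frac{2}{\sqrt\alpha} C,
$$
so that $\Var(Z\,|\,Y) = \Var(A\,|\,Y) + (4/\alpha)\Var(C\,|\,Y) - (4/\sqrt\alpha)\Cov(A,C\,|\,Y)$. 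Since the factor $1/\alpha$ only multiplies $\Var(C\,|\,Y)$, the stated leading term must come entirely from there; the other two pieces can contribute at most $O(1/\sqrt\alpha)$ to $\RVar$.

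Next I would isolate the leading contribution by splitting $C = C_0 + C_1$ with $C_0 = \langle \omega, Y - g(Y)\rangle$ and $C_1 = \langle \omega, g(Y) - g(Y^*)\rangle$. Conditional on $Y$, $C_0$ is a linear form in $\omega \sim N(0,\sigma^2 I_n)$ with coefficient vector $Y - g(Y)$, hence
$$
\Var(C_0 \,|\, Y) = \sigma^2 \|Y - g(Y)\|_2^2,
$$
and taking expectation over $Y$ and multiplying by $4/\alpha$ reproduces the claimed leading term $\frac{4\sigma^2}{B\alpha}\E\|Y - g(Y)\|_2^2$. Everything that remains—namely $\Var(C_1\,|\,Y)$, $\Cov(C_0,C_1\,|\,Y)$, $\Var(A\,|\,Y)$, and $\Cov(A,C\,|\,Y)$—must be shown to yield the $O(1/(B\sqrt\alpha))$ error term in \eqref{eq:cb_rvar_bd}.

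The main obstacle is precisely this last step, because no smoothness of $g$ is assumed; we must control fluctuations of $g(Y^*)$ around $g(Y)$ using only the $L^4$ moment hypothesis $\E\|g(Y_\beta)\|_2^4 < \infty$. The tools I would use are Cauchy–Schwarz, first pointwise then in $Y$, to reduce all cross terms to products of $L^2$ norms, together with the observation that $Y^* \stackrel{d}{=} Y_\alpha$ marginally and hence $\|g(Y^*)\|_2$ and $\|Y - g(Y^*)\|_2$ are uniformly bounded in $L^4$ for $\alpha \in [0,\beta)$. This makes $\E[\Var(A\,|\,Y)]$ and $\E\|Y - g(Y^*)\|_2^2$ both $O(1)$; combined with Gaussian moments $\E[\|\omega\|_2^{2k}] = O(1)$, Cauchy–Schwarz bounds $\E[\Var(C_1\,|\,Y)]$, $\E|\Cov(C_0,C_1\,|\,Y)|$, and $\E|\Cov(A,C\,|\,Y)|$ each by an $O(1)$ constant. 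Substituting back, the $(4/\alpha)$ and $(4/\sqrt\alpha)$ prefactors yield the announced $O(1/\sqrt\alpha)$ correction. Dividing by $B$ completes the proof.
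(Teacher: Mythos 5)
Your setup and decomposition are exactly the paper's: reduce to a single draw, cancel $\|\omega\|_2^2/\alpha$ to get $Z = \|Y - g(Y^*)\|_2^2 - (2/\sqrt\alpha)\langle\omega, Y-g(Y^*)\rangle$, split the linear term as $C = C_0 + C_1$ with $C_0 = \langle\omega, Y-g(Y)\rangle$, and read off the leading term from $\Var(C_0\,|\,Y) = \sigma^2\|Y-g(Y)\|_2^2$. That part is correct (and in fact fixes a stray factor of $n$ that appears in the paper's own display for $\Var_\omega(c)$).

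However, your error analysis has a genuine gap in the $C_1$ terms. The pieces $(4/\alpha)\Var(C_1\,|\,Y)$ and $(8/\alpha)\Cov(C_0,C_1\,|\,Y)$ carry a $1/\alpha$ prefactor, so bounding $\E[\Var(C_1\,|\,Y)]$ and $\E|\Cov(C_0,C_1\,|\,Y)|$ by $O(1)$ constants, as you propose, yields $O(1/\alpha)$ contributions --- the same order as the leading term --- not the claimed $O(1/\sqrt\alpha)$. Uniform $L^4$ boundedness of $\|g(Y^*)\|_2$ over $\alpha\in[0,\beta)$ cannot close this, because what is needed is not boundedness but \emph{smallness}: you must show that $g(Y^*)$ is close to $g(Y)$ in mean square, quantitatively in $\alpha$. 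The paper's route is to bound $\E[\Var(C_1\,|\,Y)]$ by a constant times $\E\|g(Y) - g(Y^*)\|_2^2$ and then invoke the differentiability of the map $\alpha\mapsto\E\|g(Y)-g(Y_\alpha)\|_2^2$ at $\alpha=0$ (an application of Lemma~\ref{lem:f_alpha_smoothness}, available under the fourth-moment hypothesis), which gives $\E\|g(Y)-g(Y^*)\|_2^2 = O(\alpha)$. This makes $(4/\alpha)\Var(C_1\,|\,Y)$ contribute $O(1)$ and, via Cauchy--Schwarz against $\Var(C_0\,|\,Y)=O(1)$, makes the cross term contribute $O(1/\sqrt\alpha)$. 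Your treatment of $\Var(A\,|\,Y)$ and of $(4/\sqrt\alpha)\Cov(A,C\,|\,Y)$ is fine as stated, since those only need $O(1)$ bounds; it is specifically the two $C_1$ pieces that require the $O(\alpha)$ estimate you have not supplied.
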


The proof of Proposition \ref{prop:cb_rvar} is in Appendix 
\ref{app:bias_var}. Empirical examples that investigate the reducible variance
and the dominance of the leading term \smash{$4\sigma^2\E\|Y - g(Y)\|_2^2 /
  (B\alpha)$} in \eqref{eq:cb_rvar_bd} are given in Appendix
\ref{app:additional_experiments}.    

\begin{remark}
\label{rem:cb_rvar}
The dependence of the leading term in \eqref{eq:cb_rvar_bd} on $\alpha$, which
scales as $1/\alpha$, is a consequence of a careful construction in the CB
estimator. Recall that in Remark \ref{rem:cb_options}, we explained that various
options can be used in place of the last two terms in \eqref{eq:cb_pred1}.  One
can check that choosing the exact mean $n\sigma^2(\alpha - 1/\alpha)$ would lead
to an estimator with irreducible variance $2n\sigma^4/(B\alpha^2) +
O(1/(B\alpha))$, whose leading term scales as $1/\alpha^2$.  This is due to the
conditional variance of $\|Y^\dagger\|_2^2$ given $Y$, where \smash{$Y^\dagger =
  Y - \omega/\sqrt\alpha$} is as in \eqref{eq:cb}.  Both of the options in
\eqref{eq:cb_pred1} and \eqref{eq:cb_pred2}---the second one here being the
basis for the CB estimator---substantially improve upon this, bringing the order
of dependence down to $1/\alpha$, as they each subtract off a term that 
effectively cancels out the variation of $\|Y^\dagger\|_2^2$.  The differences
between \eqref{eq:cb_pred1} and \eqref{eq:cb_pred2} are much less pronounced;
the former yields a reducible variance with leading term
\smash{$4\sigma^2\E\|g(Y)\|_2^2 / (B\alpha)$}, whereas the latter yields a
reducible variance \eqref{eq:cb_rvar_bd} with leading term
\smash{$4\sigma^2\E\|Y - g(Y)\|_2^2 / (B\alpha)$}, which can often be
smaller. For this reason, we choose to define the CB estimator as we did, based
on \eqref{eq:cb_pred2}. 
\end{remark}

\begin{remark}
For the BY estimator in \eqref{eq:by_risk}, the same arguments as in the proof
of Proposition \ref{prop:cb_rvar} show that, under the same conditions on $g$,
the reducible variance satisfies
\begin{equation}
\label{eq:by_rvar_bd}
\RVar(\BY_\alpha(g)) = \frac{4\sigma^2}{B\alpha} \E\|g(Y)\|_2^2 +
O\bigg(\frac{1}{B\sqrt\alpha}\bigg).    
\end{equation}
Note that the order of dependence here is $1/\alpha$, as in the CB
estimator. However, the factor $\E\|g(Y)\|_2^2$ that multiplies the leading
order in \eqref{eq:by_rvar_bd} can often be larger than the factor $\E\|Y - 
g(Y)\|_2^2$ in \eqref{eq:cb_rvar_bd} (as just noted at the end of the last
remark). 
\end{remark}

\begin{remark}
If we are using risk estimation to choose between models (functions) $g$ and
\smash{$\tilde{g}$}, where each of these satisfy the conditions of Proposition
\ref{prop:cb_rvar}, and importantly, we use the same bootstrap draws in
\eqref{eq:cb} for constructing $\CB_\alpha(g)$ and
\smash{$\CB_\alpha(\tilde{g})$}, then the same arguments as in the proof of
Proposition \ref{prop:cb_rvar} show that  
\begin{equation}
\label{eq:cb_rvar_diff_bd}
\RVar\big(\CB_\alpha(g) -\CB_\alpha(\tilde{g})\big) = \frac{4\sigma^2}{B\alpha}
\E\|g(Y) - \tilde{g}(Y)\|_2^2 + O\bigg(\frac{1}{B\sqrt\alpha}\bigg). 
\end{equation}
Note that the factor in \smash{$\E\|g(Y) - \tilde{g}(Y)\|_2^2$} multiplying  
the leading order in \eqref{eq:cb_rvar_diff_bd} can be even smaller than
the factor $\E\|Y - g(Y)\|_2^2$ in \eqref{eq:cb_rvar_bd}, when $g$ and 
\smash{$\tilde{g}$} are similar. 
\end{remark}  

\subsection{Irreducible variance}
\label{sec:ivar}

Recalling the expression for the infinite-bootstrap version of the CB estimator
in \eqref{eq:cb_risk_inf_expanded}, observe that we can always write the
irreducible variance, for any $g$ and any $\alpha > 0$, as
\begin{equation}
\label{eq:cb_ivar}
\IVar(\CB_\alpha(g)) = \Var\bigg( \E\big[ \|Y - g(Y + \sqrt\alpha \omega)\|_2^2 
\,\big|\, Y \big] + \frac{2}{\sqrt\alpha} \E\big[ \langle \omega, g(Y +
\sqrt\alpha \omega) \rangle \,\big|\, Y \big] \bigg).
\end{equation}
The following result studies the behavior of $\IVar(\CB_\alpha(g))$ for small
$\alpha$, under a suitable condition on $g$.  

\begin{proposition}
\label{prop:cb_ivar}
Assume that 
\begin{equation}
\label{eq:h_limit}
h(y) = \lim_{\alpha \to 0} 
\frac{2}{\sqrt\alpha} \E[\langle \omega, g(y + \sqrt\alpha \omega) \rangle]  
\quad \text{exists for almost every $y \in \R^n$},   
\end{equation}
and this convergence comes with a dominating function $H$ with $\E[H(Y)] < 
\infty$ such that   
\begin{equation}
\label{eq:h_dominate}
\frac{4}{\alpha} \E\big[\langle \omega, g(y + \sqrt\alpha \omega) \rangle
\big]^2 \leq H(y) \quad \text{for almost every $y \in \R^n$ and $\alpha \leq  
\beta$}, 
\end{equation}
for some $\beta > 0$.  Assume also that $g$ satisfies $\E\|g(Y_\beta)\|_2^4 <
\infty$.  Then     
\begin{equation}
\label{eq:cb_ivar_bd}
\IVar(\CB_\alpha(g)) = \Var\big(\|Y - g(Y)\|_2^2 + h(Y)\big) + o(1),
\end{equation}
where $o(1)$ denotes a term that converges to zero as $\alpha \to 0$. 
\end{proposition}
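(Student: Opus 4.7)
The plan is to prove that $\CB_\alpha^\infty(g) \to \|Y - g(Y)\|_2^2 + h(Y) - n\sigma^2$ in $L^2$ as $\alpha \to 0$. Since $L^2$ convergence implies convergence of both the mean and the second moment, it implies convergence of the variance; and since by the law of total variance $\IVar(\CB_\alpha(g)) = \Var(\CB_\alpha^\infty(g))$, the claim \eqref{eq:cb_ivar_bd} will follow, with the $o(1)$ term being $\Var(\CB_\alpha^\infty(g)) - \Var(\|Y - g(Y)\|_2^2 + h(Y))$. Using the decomposition \eqref{eq:cb_risk_inf_expanded}, I would write $\CB_\alpha^\infty(g) + n\sigma^2 = A_\alpha(Y) + B_\alpha(Y)$, with
$$
A_\alpha(Y) = \E\big[\|Y - g(Y + \sqrt\alpha \omega)\|_2^2 \,\big|\, Y\big], \qquad B_\alpha(Y) = \tfrac{2}{\sqrt\alpha}\E\big[\langle \omega, g(Y + \sqrt\alpha \omega)\rangle \,\big|\, Y\big],
$$
and handle the two terms separately.

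For $B_\alpha$, the hypothesis \eqref{eq:h_limit} gives $B_\alpha(Y) \to h(Y)$ almost surely, while Jensen's inequality and \eqref{eq:h_dominate} give $B_\alpha(Y)^2 \leq H(Y)$; Fatou's lemma then extends this to $h(Y)^2 \leq H(Y)$, so $(B_\alpha(Y) - h(Y))^2 \leq 4H(Y)$ is an integrable dominator, and dominated convergence yields $B_\alpha(Y) \to h(Y)$ in $L^2$.

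For $A_\alpha$, the key observation is that $\mu_\alpha(y) := \E[g(y + \sqrt\alpha \omega)]$ and $\nu_\alpha(y) := \E[\|g(y + \sqrt\alpha \omega)\|_2^2]$ are Gaussian mollifications of the locally integrable functions $g$ and $\|g\|_2^2$ (using $\E\|g(Y_\beta)\|_2^4 < \infty$), so Lebesgue differentiation gives $\mu_\alpha(y) \to g(y)$ and $\nu_\alpha(y) \to \|g(y)\|_2^2$ for Lebesgue-a.e.\ $y$, hence $Y$-a.s. Expanding $A_\alpha(Y) = \|Y\|_2^2 - 2\langle Y, \mu_\alpha(Y)\rangle + \nu_\alpha(Y)$, we get pointwise a.s.\ convergence $A_\alpha(Y) \to \|Y - g(Y)\|_2^2$. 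To upgrade this to $L^2$ convergence, I would bound $A_\alpha(Y)^2 \leq 8\|Y\|_2^4 + 8\nu_\alpha(Y)^2$ using $(a+b)^2 \leq 2a^2 + 2b^2$ and $\|\mu_\alpha\|_2^2 \leq \nu_\alpha$, and note $\E[\nu_\alpha(Y)^2] \leq \E\|g(Y_\alpha)\|_2^4$ is uniformly bounded for $\alpha \in [0, \beta)$ (by the same continuity-in-$\alpha$ argument used in Proposition \ref{prop:risk_alpha_smoothness}). Pairing pointwise convergence with convergence of the $L^1$ norms $\E[\nu_\alpha(Y)] = \E\|g(Y_\alpha)\|_2^2 \to \E\|g(Y)\|_2^2$ enables a Scheffé-type argument to promote the a.s.\ convergence of $\nu_\alpha(Y)$ and $\mu_\alpha(Y)$ to $L^2$, which then yields $A_\alpha(Y) \to \|Y - g(Y)\|_2^2$ in $L^2$.

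Combining the two $L^2$ limits via the triangle inequality and then passing to the variance closes the proof. The main obstacle is the third step: the fourth-moment hypothesis on $g$ is only just enough to keep $A_\alpha(Y)^2$ uniformly integrable, so the promotion of a.s.\ convergence to $L^2$ convergence must be executed delicately, exploiting the explicit decomposition of $A_\alpha$ into its mean and second-moment pieces rather than attempting to find a pointwise dominator for the full quantity $A_\alpha(Y)^2$.
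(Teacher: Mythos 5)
Your proposal follows essentially the same route as the paper's proof: both rest on the decomposition \eqref{eq:cb_risk_inf_expanded} of $\CB_\alpha^\infty(g)$ into the noise-added training-error term and the rescaled inner-product term, obtain almost-sure convergence of each piece from \eqref{eq:h_limit} and the Gaussian-mollification (Lebesgue point) lemma, control the second piece by $H$ via \eqref{eq:h_dominate}, control the first piece through the fourth-moment condition on $g$, and then pass to the limit inside the variance. The paper phrases the last step as dominated convergence applied to the integral defining $\IVar(\CB_\alpha(g))$, while you phrase it as $L^2$ convergence of $\CB_\alpha^\infty(g)$; these are interchangeable, and you are in fact more explicit than the paper about the point that the natural dominating function $\E\|g(y+\sqrt\alpha\,\omega)\|_2^4$ for the first term itself depends on $\alpha$, so that a Pratt/Scheff\'e-type strengthening of DCT is genuinely required. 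The one step to tighten is your Scheff\'e argument for $A_\alpha$: convergence of the first moments $\E[\nu_\alpha(Y)] \to \E\|g(Y)\|_2^2$ only upgrades the a.s.\ convergence of $\nu_\alpha(Y)$ to $L^1$ convergence, whereas $L^2$ convergence of $A_\alpha(Y)$ needs convergence of second moments, e.g.\ $\E[\nu_\alpha(Y)^2] \to \E\|g(Y)\|_2^4$. This is available by the same mechanism you already invoke: by Jensen, $\nu_\alpha(y)^2 \leq \E\|g(y+\sqrt\alpha\,\omega)\|_2^4$, the right-hand side converges for almost every $y$ to $\|g(y)\|_2^4$ by Lemma \ref{lem:lebesgue_point} and its integral $\E\|g(Y_\alpha)\|_2^4$ converges to $\E\|g(Y)\|_2^4$ by Lemma \ref{lem:f_alpha_smoothness}, so Pratt's lemma gives $\E[\nu_\alpha(Y)^2] \to \E\|g(Y)\|_2^4$, and the Riesz--Scheff\'e lemma (a.s.\ convergence plus convergence of $L^2$ norms) then yields the $L^2$ convergence, with the cross term $\langle Y, \mu_\alpha(Y)\rangle$ handled analogously via Cauchy--Schwarz. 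With that repair your argument closes and matches the paper's in substance.
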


The proof of Proposition \ref{prop:cb_ivar} is in Appendix \ref{app:bias_var}.
Empirical examples that examine the irreducible variance for small $\alpha$ can
be found in Appendix \ref{app:additional_experiments}.  

\begin{remark}
For the BY estimator, recall, its infinite-bootstrap version takes the form
\eqref{eq:by_risk_inf_expanded}, which means that its irreducible variance is  
\begin{equation}
\label{eq:by_ivar}
\IVar(\BY_\alpha(g)) = \Var\bigg( \E[ \|Y - g(Y)\|_2^2] + \frac{2}{\sqrt\alpha}
\E\big[ \langle \omega, g(Y + \sqrt\alpha \omega) \rangle \,\big|\, Y \big]
\bigg). 
\end{equation}
This is just as in \eqref{eq:cb_ivar}, but in the first term (inside of the
variance), we are measuring the error between $Y$ and $g(Y)$, rather than
$Y$ and $g$ applied to the noise-elevated data.  The result of
Proposition \ref{prop:cb_ivar} carries over to the BY estimator: under
\eqref{eq:h_limit}, \eqref{eq:h_dominate}, and the moment condition on $g$,
the same small-$\alpha$ representation in \eqref{eq:cb_ivar_bd} holds for
$\IVar(\BY_\alpha(g))$.  The subtle difference between \eqref{eq:cb_ivar} and
\eqref{eq:by_ivar} can indeed materialize in practice, especially when the
estimate $g$ is nonsmooth and unstable. See Figure \ref{fig:risk_comparison2}
and the discussion in Section \ref{sec:comp_cb_by}. 
\end{remark} 

\begin{remark}
\label{rem:cb_ivar_wd}
As we showed in Theorem \ref{thm:cb_noiseless} (a similar result appears in
\citet{ramani2008monte}), when $g$ is weakly differentiable and its weak partial
derivatives are integrable, the limit in \eqref{eq:h_limit} exists, and equals     
$$
h(y) = \sigma^2 (\nabla \cdot g)(y) = 2 \sigma^2 \sum_{i=1}^n \nabla_i g_i(y),  
$$
which is the divergence of $g$ (scaled by $2\sigma^2$).  Furthermore, one can 
check that condition \eqref{eq:h_dominate} is implied by squared integrability
of the divergence at an elevated noise level: $\E[(\nabla \cdot g)(Y_\alpha)^2]
< \infty$ for some $\alpha > 0$. The result in \eqref{eq:cb_ivar_bd} then reads   
$$
\IVar(\CB_\alpha(g)) = \Var\big(\|Y - g(Y)\|_2^2 + 2\sigma^2 (\nabla \cdot
g)(Y)\big) + o(1),
$$
i.e., the irreducible variance of the CB estimator converges to the variance of
SURE, as $\alpha \to 0$. 
\end{remark}

\begin{remark}
\label{rem:cb_ivar_ht}
It is worth emphasizing that the dominating condition in \eqref{eq:h_dominate}
is key: without it, the result in the proposition is not true in general.  As an
example, consider the hard-thresholding function, which, recall, has components
$g_i(Y) = Y_i \cdot 1\{|Y_i| \geq t\}$, $i=1,\ldots,n$. This satisfies the limit
condition in \eqref{eq:h_limit}, where the limiting function $h$ is $2\sigma^2
\nabla \cdot g$, as in \eqref{rem:cb_noiseless_ht}.  However, in a sense we
already know that the limiting irreducible variance of hard-thresholding should
not simply be the variance of SURE, due to the bias of SURE for the risk in this
case \citep{tibshirani2015degrees}. Indeed, a direct calculation (building off
that in Appendix \ref{app:cb_noiseless_ht}) confirms that \eqref{eq:h_dominate}
fails for the hard-thresholding function. 

The importance of the dominating condition \eqref{eq:h_dominate} raises a
natural question: what are the most general conditions under which
\eqref{eq:h_dominate} holds? Can it be met outside of weak differentiability of
$g$? As of now, we do not have a good answer to this, and it remains an open
question for future work. 
\end{remark}

\subsection{Summary of bias and variance results}

\begin{table}[tb]
\centering\small
\def\arraystretch{1.5} 
\begin{tabular}{c|c|c|c}
& Bias to $\Risk(g)$ & Reducible variance & Irreducible variance \\
\hline 
CB estimator &  
$\lesssim \alpha \sqrt{\frac{n}{2} \Var(\|\theta - g(Y)\|_2^2)}$ &
$\lesssim \frac{4\sigma^2}{B\alpha} \E\|Y - g(Y)\|_2^2$ &
stable when $g$ is smooth \\
BY estimator & ? & 
$\lesssim \frac{4\sigma^2}{B\alpha} \E\| g(Y)\|_2^2$ &
stable when $g$ is smooth \\
\end{tabular}
\caption{\small\it Summary of bias and variance results described across
  Propositions \ref{prop:cb_bias}--\ref{prop:cb_ivar} and ensuing remarks.   
  Above, $\lesssim$ means that we omit all terms with a lower-order dependence
  on $\alpha$.}
\label{tab:bv_summary}
\end{table}

Table \ref{tab:bv_summary} summarizes the bias and variance results from this
section. We use ``stable when $g$ is smooth'' for the irreducible variance to
reflect the fact that it is not clear in what general settings this will be
stable as $\alpha \to 0$, for the CB and BY methods; recall, for either method,
the conditions in \eqref{eq:h_limit}, \eqref{eq:h_dominate} are sufficient to
ensure that the limiting irreducible variance satisfies \eqref{eq:cb_ivar}.
While these conditions are met (and the limiting irreducible variance is the
variance of SURE) in the case of weakly differentiable $g$ (Remark
\ref{rem:cb_ivar_wd}), the extent to which these conditions apply beyond weak
differentiability remains unclear, and for hard-thresholding as a key non-weakly
differentiable example, the second condition fails (Remark
\ref{rem:cb_ivar_ht}).

The lack of clarity on the irreducible variance prevents us from reasoning
holistically about the behavior of the CB or BY methods in the infinitesimal
$\alpha$ regime (beyond the case of smooth $g$).  However, practically speaking,
for a given data set at hand, we would of course choose $\alpha$ to be small but
non-infinitesimal, such as $\alpha=0.01$, or $\alpha=0.05$.  This brings us to a
primary advantage of the CB estimator in particular, reflected in the first
column of the table: it is always unbiased for $\Risk_\alpha(g)$, the risk of
$g$ at the noise-elevated level of $(1+\alpha) \sigma^2$.  Therefore, provided
that we have a sense---practically, conceptually, or theoretically (first
column, see also Remark \ref{rem:cb_bias_rel})---that $\Risk_\alpha(g)$ is a
reasonable target of estimation, we do not have to concern ourselves with the
infinitesimal $\alpha$ regime.

\section{Experiments}
\label{sec:experiments}

In this section, we study the performance of the CB method empirically.  The
first two subsections compare the CB and BY estimators in simulations (the
performance for Efron's method was generally much worse and its results were
omitted accordingly). The third studies the use of the CB estimator for
parameter tuning in an image denoising application. Code to reproduce all
experimental results in this section is available online at
\url{https://github.com/nloliveira/coupled-bootstrap-risk-estimation}.    

\subsection{Comparison of CB and BY}
\label{sec:comp_cb_by}

We compare the CB estimator \eqref{eq:cb_risk} to the BY estimator
\eqref{eq:by_risk} in simulations. At a high level, our goal is not to show that
CB is better than BY at estimating risk in every scenario, but instead to
provide experimental support around the following four points: 
\begin{enumerate}[(i)]
\item for any $g$ and any $\alpha$, $\CB_\alpha(g)$ is unbiased for
  $\Risk_\alpha(g)$;  
\item for linear $g$ and any $\alpha$, $\BY_\alpha(g)$ is unbiased for
  $\Risk(g)$;
\item for nonlinear $g$, the bias of $\BY_\alpha(g)$ is unpredictable---it can
  be increasing or decreasing as $\alpha$ increases, and it can be larger or 
  smaller than $\Risk_\alpha(g)$; 
\item for unstable $g$, the variance of $\BY_\alpha(g)$ can be much larger than
  that of $\CB_\alpha(g)$. 
\end{enumerate}

Throughout, we fix $n=100$ and $p=200$, and we generate data $Y \in \R^n$ from
a linear model with feature matrix $X \in \R^{n \times p}$. At the outset, we
draw the entries of $X$ from $N(0,1)$, and we draw the coefficient vector in the
linear model $\beta \in \R^p$ to have $s$ nonzero entries from
$\mathrm{Unif}(-1,1)$.  The features $X$ and coefficient vector $\beta$ are then
fixed for all subsequent repetitions of the given simulation.  For each
repetition $r=1,\ldots,100$, we generate a response vector    
$$
Y^{(r)} = X \beta + \epsilon^{(r)}, 
$$
where the error vector \smash{$\epsilon^{(r)} \in \R^n$} has i.i.d.\ entries
from $N(0, \sigma^2$), and the error variance $\sigma^2$ is chosen to meet a
desired signal-to-noise ratio \smash{$\mathrm{SNR} = \Var_n(X\beta) / \sigma^2$} 
(where $\Var_n(\cdot)$ denotes the empirical variance operator on $n$
samples). We then apply each risk estimator (CB or BY) to \smash{$Y^{(r)}$},
with a particular function $g$, number of bootstrap draws $B$, and auxiliary
noise parameter $\alpha$, in order to produce a risk estimate.  Finally, we
report aggregate results over all repetitions $r=1,\ldots,100$. 

The number of bootstrap draws is fixed at $B=100$ throughout.  We consider
four different functions $g$: 
(a) ridge regression, with a fixed tuning parameter, $\lambda=5$;
(b) lasso regression, with a fixed tuning parameter, $\lambda=0.31$;
(c) forward stepwise regression, with a fixed number of steps, $k=2$; and
(d) lasso regression, with the tuning parameter $\lambda$ chosen by
cross-validation. (The particular tuning parameter values $\lambda$ for ridge
and lasso were chosen because they were close to the middle, roughly speaking,
of their effective solution paths.)  Our implementation uses the \texttt{glmnet}
\citep{friedman2010regularization} R package for ridge and lasso, and
\texttt{bestsubset} \citep{hastie2020best} R package for forward stepwise. We
note that the functions $g$ in (a) and (b) are weakly differentiable, but those
in (c) and (d) are not.  For CB, we consider six values for $\alpha$: 0.05, 0.1,
0.2, 0.5, 0.8, and 1.  All risks and risk estimates throughout are scaled by
$1/n$.

\begin{figure}[htbp]
\centering
\begin{subfigure}[b]{0.425\textwidth}
\includegraphics[width=\textwidth]{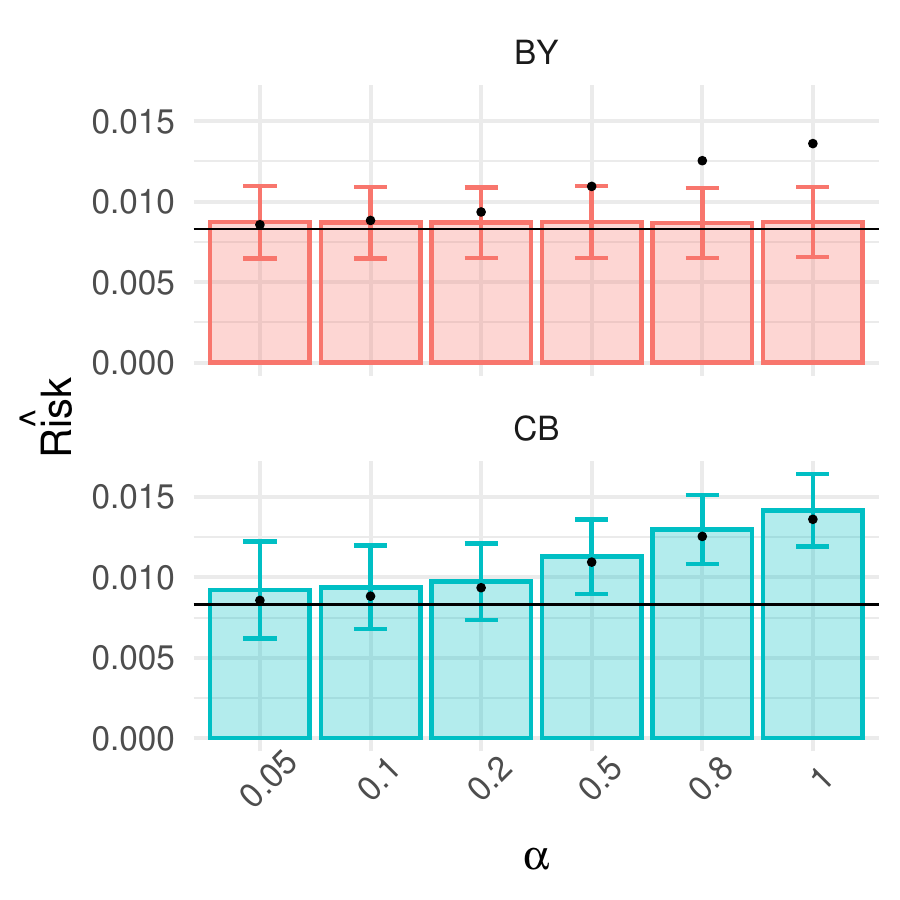}
\caption{Ridge regression, fixed $\lambda$}
\end{subfigure}
\begin{subfigure}[b]{0.425\textwidth}
\includegraphics[width=\textwidth]{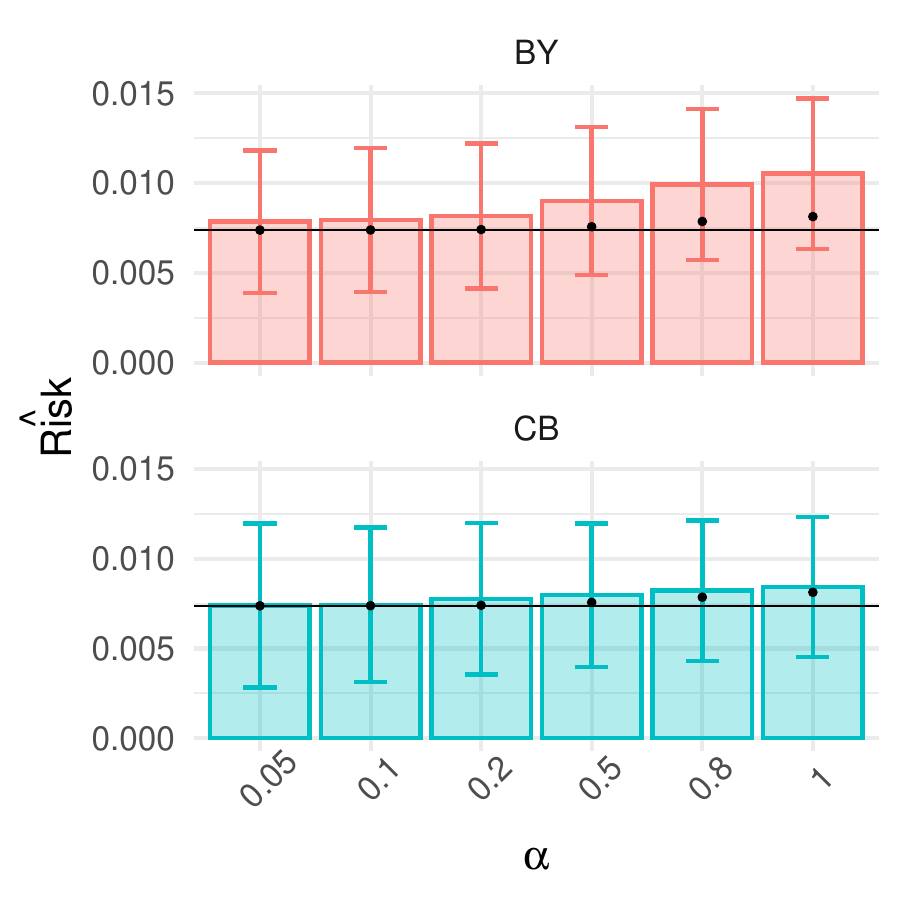}
\caption{Lasso regression, fixed $\lambda$}
\end{subfigure} \\
\begin{subfigure}[b]{0.425\textwidth}
\includegraphics[width=\textwidth]{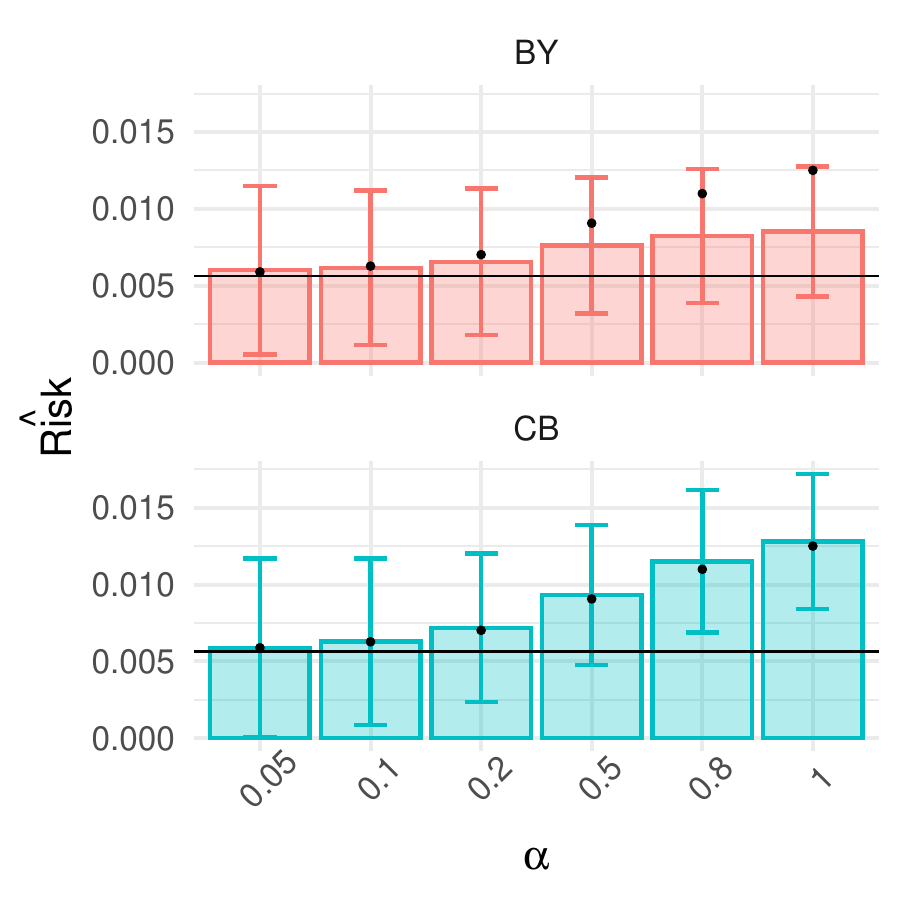}
\caption{Forward stepwise, fixed $k$}
\end{subfigure}
\begin{subfigure}[b]{0.425\textwidth}
\includegraphics[width=\textwidth]{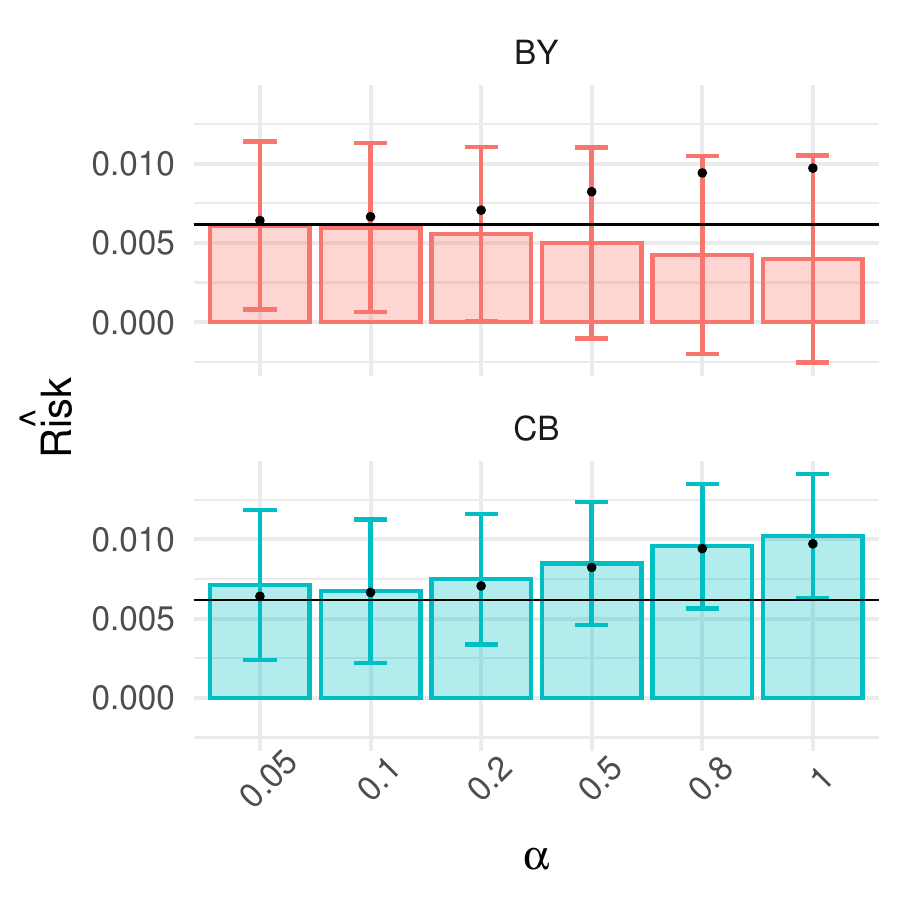}
\caption{Lasso regression, CV-tuned $\lambda$}
\end{subfigure}
\caption{Comparison of CB and BY risk estimators for different functions $g$,
  when $s=5$ and $\mathrm{SNR}=0.4$.}
\label{fig:risk_comparison1}

\bigskip
\centering
\includegraphics[width=0.7\textwidth]{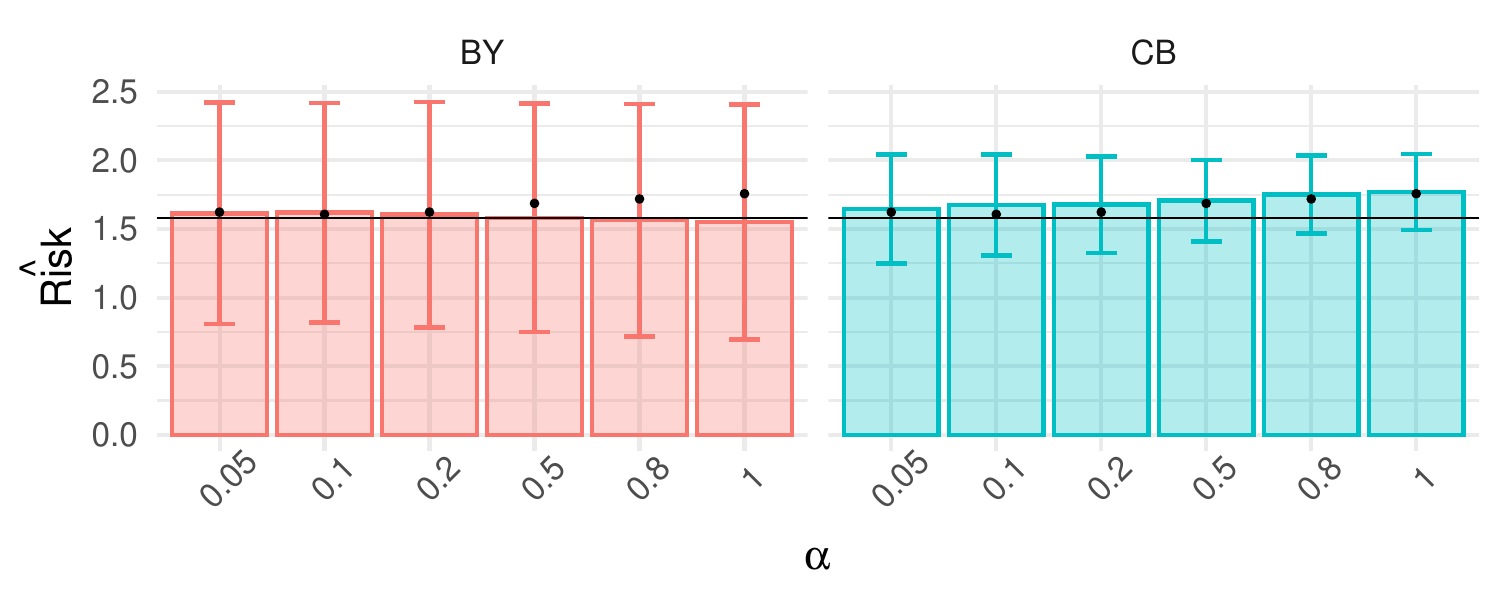}
\caption{Comparison of CB and BY risk estimators when $g$ is the lasso tuned by 
  cross-validation, $s=200$, and $\mathrm{SNR}=2$.} 
\label{fig:risk_comparison2}
\end{figure}

Figure \ref{fig:risk_comparison1} shows the results for when the underlying
linear model has sparsity $s=5$, and $\mathrm{SNR}=0.4$.  The figure displays
the average risk estimate from each method, CB and BY, as well as standard
errors of these risk estimates. Each panel (a)--(d) corresponds to one of the
four functions $g$ described above. In each panel, the black horizontal line
represents $\Risk(g)$, and the black dots represent $\Risk_\alpha(g)$ (which are
themselves estimated via Monte Carlo).  We can see that for each function $g$,
the CB method is unbiased for $\Risk_\alpha(g)$, as expected.  Meanwhile, we see
that the bias of the BY method varies quite a lot with $g$, having zero,
positive, or negative bias, depending on the situation. In panel (a), where $g$
is a linear smoother (ridge), the average BY estimate matches $\Risk(g)$, for
any $\alpha$, as expected. In (b), where $g$ is nonlinear but weakly
differentiable (lasso), it overestimates $\Risk_\alpha(g)$ and thus also
$\Risk(g)$, more so at the larger values of $\alpha$.  In panel (c), where $g$
is both nonlinear and nonsmooth (forward stepwise), BY underestimates
$\Risk_\alpha(g)$ but also overestimates $\Risk(g)$, for larger $\alpha$; and in
(d), where $g$ is again nonlinear and nonsmooth (lasso tuned by
cross-validation), it underestimates both $\Risk_\alpha(g)$ and $\Risk(g)$ for
larger $\alpha$.  In summary, there is no single consistent behavior for the
bias of $\BY_\alpha(g)$ across all scenarios.  (Certainly, the rule-of-thumb 
advocated by \citet{ye1998measuring} of simply taking $\alpha = 0.6$ does
deliver consistently favorable performance throughout.)

Overall, for small $\alpha$, the average BY estimate appears to be empirically 
close to $\Risk(g)$ in all situations (as does the average CB estimate), but we 
reiterate that there is no guarantee this will be true in general for nonsmooth
$g$ (as in panels (c) and (d)). However, the average CB estimate will always be
close to $\Risk_\alpha(g)$, which will be in turn close to $\Risk(g)$ for small
$\alpha$, no matter the smoothness of $g$ (Propositions
\ref{prop:risk_alpha_smoothness} and \ref{prop:cb_bias}).  

In the previous figure, the variability of the BY and CB estimates (reflected in
the standard error bars) appears roughly similar throughout.  In Figure
\ref{fig:risk_comparison2}, we demonstrate that this need not be the case in
general.  By increasing the true sparsity level to $s=200$ (the true linear
model is dense) and the signal-to-noise ratio to $\mathrm{SNR}=2$, we see that
the BY estimates appear much more volatile than those from CB, when we take $g$
to be the lasso tuned by cross-validation.  This holds across all values of
$\alpha$.  In Appendix \ref{app:additional_experiments}, we show that the  
larger variance of BY in this setting is due to its irreducible variance, and in
particular, just one part of its irreducible variance: comparing
\eqref{eq:cb_ivar} and \eqref{eq:by_ivar}, we see that the only difference
between the two is the first term (inside the variance).  In CB, this is the
conditional expectation of the noise-added training error, and in BY, it is the
training error itself.  When $g$ is unstable, as in the current setting (the use
of cross-validation for tuning induces instability into the ultimate prediction
function), the latter can be much more variable.

\subsection{Degrees of freedom}

Recalling Efron's covariance decomposition \eqref{eq:cov_decomp}, and the
definition of degrees of freedom \eqref{eq:df}, it is clear that estimating
$\Risk(g)$ and estimating $\df(g)$ are equivalent problems, in the normal means
setting.  Thus, parallel to the perspective and development used in this paper,
where the CB method \eqref{eq:cb_risk} is crafted as an unbiased estimator of
$\Risk_\alpha(g)$, the risk of $g$ at the elevated noise level of $(1+\alpha)
\sigma^2$, we can equivalently view:
\begin{equation}
\label{eq:cb_df}
\widehat{\df}_\alpha(g) = \frac{\CB_\alpha(g) - \frac{1}{B} \sum_{b=1}^B\| Y^\pb
  - g(Y^\pb) \|_2^2 + n\sigma^2(1+\alpha)}{2\sigma^2(1+\alpha)}
\end{equation}
as an unbiased estimator of $\df_\alpha(g)$, the degrees of freedom of $g$ at
the elevated noise level $(1+\alpha) \sigma^2$.  For the BY method,
meanwhile, one can proceed similarly in moving from \eqref{eq:by_risk} to an
estimator of degrees of freedom (by subtracting off training error and
rescaling); however, there is an alternative and more direct estimator that
stems from this method, which was the original proposal of
\citet{ye1998measuring}, namely: 
\begin{equation}
\label{eq:by_df}
\widetilde{\df}_\alpha(g) = \frac{1}{\sigma^2 \alpha} \sum_{i=1}^B \hCov_i^*,   
\end{equation}
where \smash{$\hCov_i^*$}, $i=1,\ldots,n$, are as in \eqref{eq:bootstrap_cov}.
While \smash{$\widehat{\df}_\alpha(g)$} estimates \smash{$\df_\alpha(g)$}
(unbiasedly), it seems that \smash{$\widetilde{\df}_\alpha(g)$} is designed to
directly estimate $\df(g)$ (though not unbiasedly). 

In Figure \ref{fig:df_comparison}, we evaluate the performance of these two
degrees of freedom estimators \eqref{eq:cb_df}, \eqref{eq:by_df} using the same
simulation framework as that described in the last subsection, with $s=5$ and
$\mathrm{SNR}=2$.  We consider two functions $g$: lasso and forward stepwise,
and for each, we vary their tuning parameters over their effective ranges.
Lastly, we fix $\alpha=0.1$.  The figure displays the estimated degrees of
freedom from CB \eqref{eq:cb_df} or BY \eqref{eq:by_df}, against the support
size of the underlying fitted sparse regression model (for the lasso, we take
this to be the average support size for the given value of $\lambda$ over all
100 repetitions): the bands represent the degrees of freedom estimate plus and
minus one standard error, over the 100 repetitions.  The true degrees of freedom
(itself estimated via Monte Carlo) is plotted as a dashed line. To be clear, this
plots the degrees of freedom $\df(g)$ at the original noise level, not the 
noise-elevated degrees of freedom $\df_\alpha(g)$.  We see that both methods
provide reasonably accurate estimates of $\df(g)$ throughout, albeit slightly
biased upwards at various points along the path (support sizes), due to the use
of $\alpha=0.1$.  Reducing $\alpha$ would reduce the bias, but also increase the
variability.  We also see that the estimates of degrees of freedom from the CB
method are just a bit more variable across the lasso path, and most noticeably
so at the smallest support sizes.   

\begin{figure}[htb]
\centering
\includegraphics[width=0.85\textwidth]{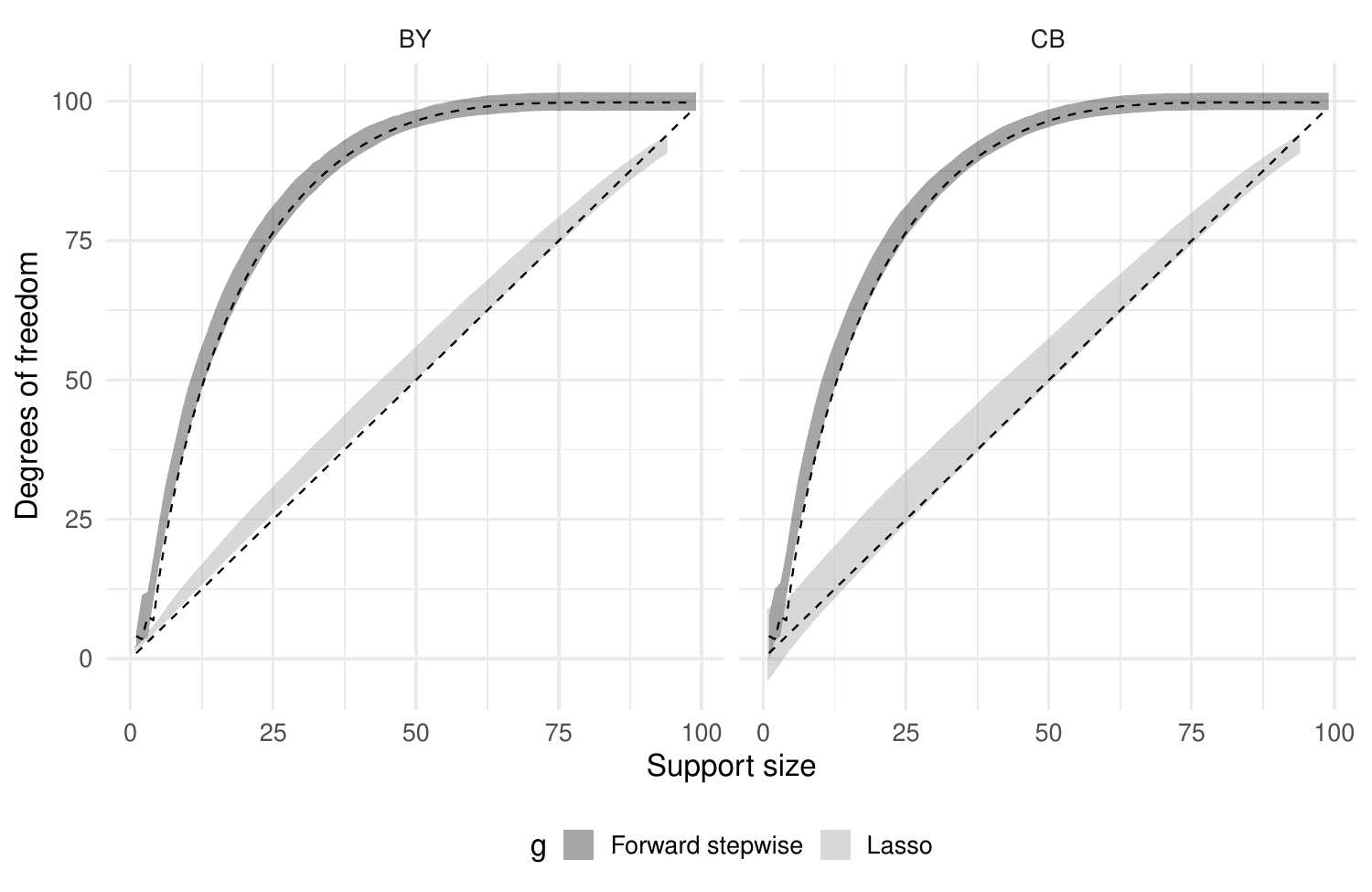}
\caption{Comparison of CB and BY degrees of freedom estimators applied to the  
  full forward stepwise and lasso paths, when $s=5$, $\mathrm{SNR}=2$, and
  $\alpha=0.1$.}  
\label{fig:df_comparison}
\end{figure}

\subsection{Image denoising}
\label{sec:image_denoising}

As a last example, we consider using the CB method for tuning parameter
selection in image denoising.  In image denoising, and signal processing more
broadly, SURE has become a central method for risk estimation and parameter
tuning (see Section \ref{sec:related_work} for references).  We focus on the
2-dimensional fused lasso \citep{tibshirani2005sparsity, hoefling2010path} as an
image denoising estimator, as it is weakly differentiable and its divergence can
be computed in analytic form \citep{tibshirani2011solution,
  tibshirani2012degrees}. This allows us to draw a comparison to SURE, which
takes the simple form:   
$$
\SURE(g) = \|Y - g(Y)\|_2^2 + 2\sigma^2 \big( \text{\# of fused groups in 
  $g(Y)$} \big) - n\sigma^2.
$$
To compare the CB estimator \eqref{eq:cb_risk} and SURE (above) empirically, we
use the standard ``parrot'' image from the image processing literature (leftmost
panel of Figure \ref{fig:parrot_images}), and we generate data $Y$ by adding
i.i.d.\ normal noise to each pixel (second from the left in Figure
\ref{fig:parrot_images}). Figure \ref{fig:parrot_risk} compares SURE (dashed
line) and CB (solid lines) across several values of $\alpha$, each as functions
of the underlying tuning parameter $\lambda$ in the 2d fused lasso optimization
problem.  The true risk is also plotted (dotted line).  The main conclusion is
that, for all values of $\alpha$ (even the largest, $\alpha=0.5$), the
minimizers of the CB curve over $\lambda$ are all close to that of SURE, which
means that the subsequent CB-tuned and SURE-tuned estimates are themselves all
quite similar (second to right and rightmost panels of Figure
\ref{fig:parrot_images}).  This speaks---informally---to model selection being
``easier'' than risk estimation in this context, as we can get away with larger
values of $\alpha$ and still make the relevant risk comparisons needed in order
to accurately select a model (indexed by a tuning parameter). In Appendix
\ref{app:additional_experiments}, we provide a more in-depth view by aggregating
model selection results in this image denoising simulation over multiple
repetitions.  

\begin{figure}[htb]
\centering
\includegraphics[width=0.7\textwidth]{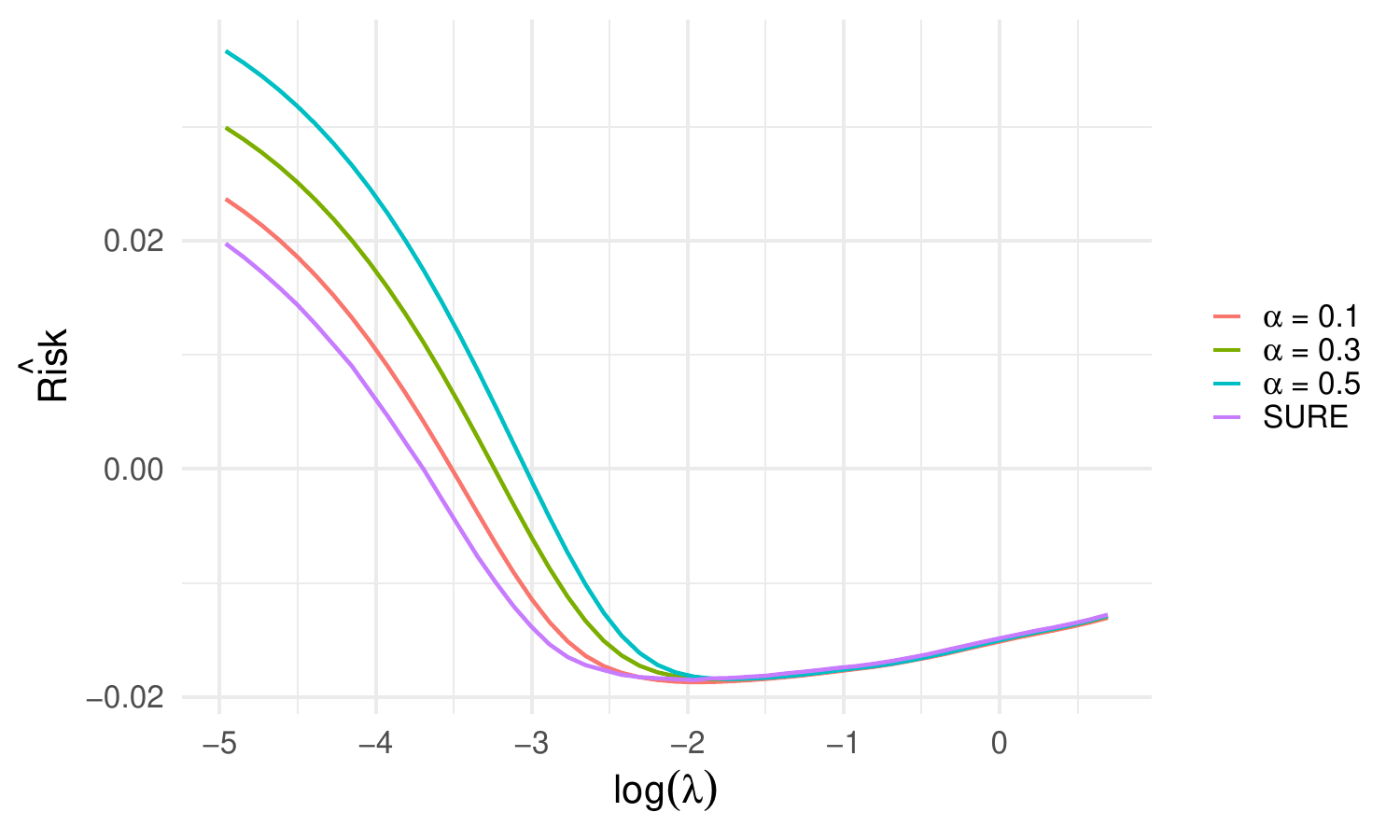}
\caption{Comparison of CB and SURE in image denoising.}       
\label{fig:parrot_risk}

\bigskip
\includegraphics[width=\textwidth]{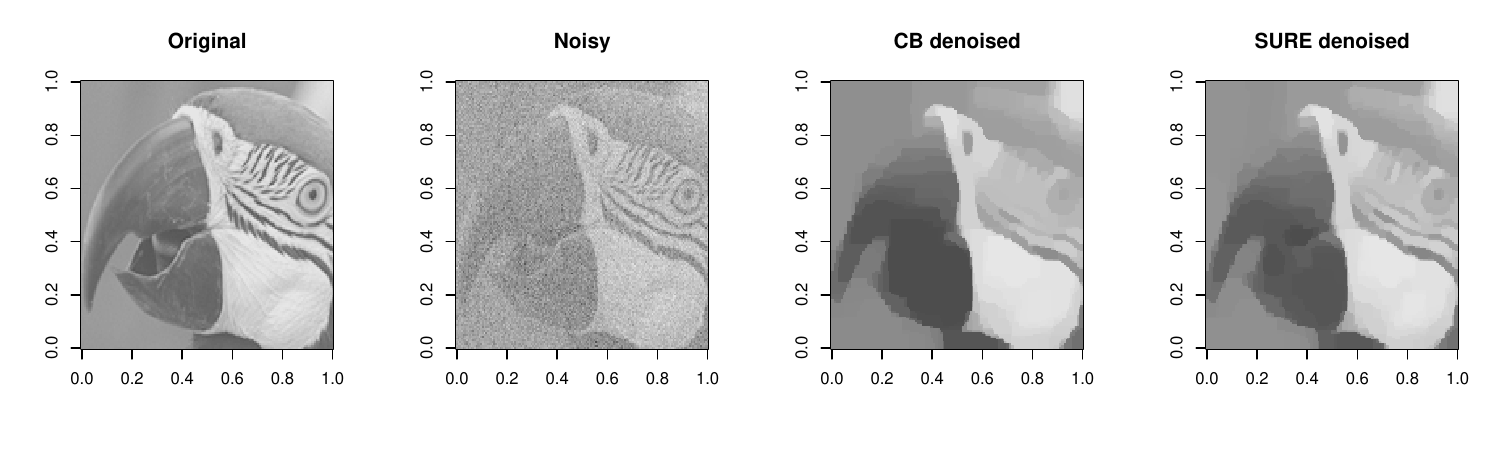} 
\caption{Original ``parrot'' image (leftmost), and a noisy version (second from  
  left) used for image denoising. The CB-tuned (second from right) and
  SURE-tuned (righmost) estimates look very similar. The CB tuning here uses 
  $\alpha=0.5$, which corresponds the biggest gap in the selected $\lambda$ to
  that from SURE.}   
\label{fig:parrot_images}
\end{figure}

\section{Discussion}
\label{sec:discussion}

In this work, we proposed and studied a coupled bootstrap (CB) method for risk
estimation in the standard normal means problem.  Our estimator is on one hand
similar to bootstrap-based proposals for risk estimation (via a covariance
decomposition) in this setting from \citet{breiman1992little, ye1998measuring, 
  efron2004estimation}.  On the other hand, it is different in a crucial way:
for any value of the auxiliary (bootstrap) noise parameter $\alpha>0$, the CB 
estimator is unbiased for $\Risk_\alpha(g)$, the risk of the given function $g$,
when the noise level in the normal means problem is elevated from
$\sigma^2$ to $(1+\alpha) \sigma^2$.  We proved that when $g$ is weakly
differentiable, the CB estimator (with infinite bootstrap iterations) reduces to 
SURE as $\alpha \to 0$. The same is true of the Breiman-Ye estimator does in
this noiseless limit. However, for nonsmooth $g$ and an arbitrary
non-infinitesimal $\alpha$, the CB estimator still tracks an intuitively
reasonable target: $\Risk_\alpha(g)$. Indeed, it is always unbiased for
$\Risk_\alpha(g)$, requiring no assumptions on $g$ whatsoever, which is a unique
property.  As such, it can be applied to arbitrarily complex functions $g$,
including those that use some sort of internal tuning parameter selection
mechanism.        

% Along these lines, an interesting use case of the CB estimator to
% consider for future study would be estimation of excess optimism and excess
% degrees of freedom, as in \citet{tibshirani2019excess}. 

Of course, one of the most important practical problems not addressed in the
current paper is estimation of the error variance $\sigma^2$. In practice, the
simplest strategies here tend to be among the most commonly-used, and among the
most effective: we could simply estimate $\sigma^2$ by using the sample variance
of training residuals $Y_i - g_i(Y)$, $i=1,\ldots,n$ (possibly with a degrees of
freedom correction, which can be important for complex models $g$). A study of
the impact of estimating $\sigma^2$ on risk estimation and model selection is a
topic for future work. (For model selection, we may not actually need to
estimate $\sigma^2$ up to the same degree of accuracy as we would in risk
estimation; recall, we have already seen that relative large values of the
auxiliary noise level $\alpha > 0$ can still result in good model selection
performance, in Figure \ref{fig:parrot_risk}.)       

Two more extensions of the CB framework that may be of interest for future
work are described below.

\subsection{General error covariance}

Consider, instead of \eqref{eq:data_model}, data drawn according to:
\begin{equation}
\label{eq:data_model_sig}
Y \sim N(\theta, \Sigma),
\end{equation}
for a positive definite covariance matrix $\Sigma \in \R^{n \times n}$.  In such a
structured error setting, it may be of interest to 
measure loss according to a generalized quadratic norm, thus we introduce the 
notation $\|x\|_A^2 = x^\T A^{-1} x$ for a vector $x$ and positive semidefinite
matrix $A$.  For example, we may choose to measure loss according to
\smash{$\|\theta - g(Y)\|_\Sigma^2$}, since the curvature in this loss takes
$\Sigma$ into account, just like the negative log-likelihood in the model
\eqref{eq:data_model_sig}.  

We extend the CB estimator so that it applies to an arbitrary positive
semidefinite matrix $A$ defining the risk, and an arbitrary positive
semidefinite matrix $\Sigma$ in \eqref{eq:data_model_sig}.  The next result is a
straightforward extension of Proposition \ref{prop:three_point}.

\begin{proposition}
\label{prop:three_point_a}
Let $U,V,W \in \R^n$ be independent random vectors.  Then for any $g$, and  
positive semidefinite matrix $A \in \R^{n \times n}$, 
\begin{equation}
\label{eq:three_point_a1}
\E \|V - g(U)\|_A^2 - \E \|W - g(U)\|_A^2 = \E\|V\|_A^2 - \E\|W\|_A^2 + 2 
\langle A^{-1} \E[g(U)], \E[W] - \E[V] \rangle. 
\end{equation}
assuming all expectations exist and are finite. In particular, if $U,V$ are
i.i.d.\ and $\E[U] = \E[W]$, then
\begin{equation}
\label{eq:three_point_a2}
 \E\|V - g(U)\|_A^2 = \E \|W - g(U)\|_A^2 + \E\|U\|_A^2 - \E\|W\|_A^2. 
\end{equation}
\end{proposition}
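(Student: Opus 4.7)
The plan is to mimic directly the proof of Proposition \ref{prop:three_point}, simply replacing the standard inner product by the inner product $\langle x, y\rangle_{A^{-1}} = x^\T A^{-1} y$ induced by $A^{-1}$. The statement and its proof should carry over essentially verbatim because the only ingredient used in the original argument was bilinearity of the inner product together with independence of $U, V, W$ — properties that are unaffected by inserting a fixed positive semidefinite matrix $A^{-1}$ in the middle.

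More concretely, first I would expand both $\|V - g(U)\|_A^2$ and $\|W - g(U)\|_A^2$ by writing each as $\langle x - y, x - y\rangle_{A^{-1}}$ and distributing, obtaining the identity
\begin{equation*}
\|V - g(U)\|_A^2 - \|W - g(U)\|_A^2 = \|V\|_A^2 - \|W\|_A^2 + 2 \langle A^{-1} g(U), W - V\rangle,
\end{equation*}
as a pointwise (pre-expectation) equality, where the cross terms $g(U)^\T A^{-1} g(U)$ cancel between the two quadratic forms.

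Next, I would take expectations on both sides and use the independence of $U$ from both $V$ and $W$ to factor
\begin{equation*}
\E \langle A^{-1} g(U), V \rangle = \langle A^{-1} \E[g(U)], \E[V] \rangle,
\end{equation*}
and analogously for $W$. Combining these gives exactly \eqref{eq:three_point_a1}. For the specialization \eqref{eq:three_point_a2}, I would observe that the hypothesis $\E[V] = \E[W]$ kills the inner product term on the right-hand side of \eqref{eq:three_point_a1}, and the hypothesis that $U, V$ are i.i.d.\ gives $\E\|V\|_A^2 = \E\|U\|_A^2$, so rearranging yields the desired identity.

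There is really no main obstacle: the argument is pure bilinear algebra plus independence, and the presence of $A^{-1}$ is cosmetic. The one thing to be slightly careful about is that $A$ is only assumed positive semidefinite, so strictly speaking $A^{-1}$ may not exist; however, since $\|x\|_A^2$ is defined via $A^{-1}$ in the paper's notation, this is already implicit in the statement (or, alternatively, one may read $A^{-1}$ as the Moore–Penrose pseudoinverse and check that the same algebra goes through, as all manipulations are purely linear).
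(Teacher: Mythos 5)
Your proof is correct and follows essentially the same route as the paper, which proves this result exactly as it proves Proposition \ref{prop:three_point}: expand the quadratic forms, cancel the $g(U)^\T A^{-1} g(U)$ terms, and use independence to factor the expectations of the cross terms. Your side remark about $A$ being only positive semidefinite (so that $A^{-1}$ should be read as a pseudoinverse) is a fair observation about a point the paper itself glosses over, but it does not change the argument.
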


And in turn, the next result is a straightforward extension of Corollary
\ref{cor:cb_unbiased}.

\begin{corollary}
\label{cor:cb_unbiased_sig}
Let $Y \sim N(\theta, \Sigma)$.  Given any function $g$, a positive semidefinite 
matrix $A \in \R^{n \times n}$ that will be used to measure risk, and an
auxiliary noise level $\alpha > 0$, consider defining a CB estimator according
to:   
\begin{equation}
\label{eq:cb_bootstrap_sig}
\begin{gathered}
\omega^b \sim N(0, \Sigma), 
\quad \text{independently}, \quad \text{for $b=1,\ldots,B$}, \\
Y^\pb = Y + \sqrt\alpha \omega^b, \quad 
Y^\mb = Y - \omega^b / \sqrt\alpha, \quad \text{for $b=1,\ldots,B$},  
\end{gathered}
\end{equation}
and:
\begin{equation}
\label{eq:cb_risk_sig}
\CB_{A,\alpha}(g) = \frac{1}{B} \sum_{b=1}^B \Big(\|Y^\mb - g(Y^\pb)\|^2_A - 
\|\omega^b\|_A^2 / \alpha\Big) - \tr(A^{-1} \Sigma).
\end{equation}
Then this is unbiased for risk at the noise-elevated level $(1+\alpha) \Sigma$ 
measured with respect to $A$, i.e.,
$$
\E[\CB_{A,\alpha}(g)] = \Risk_{A,\alpha}(g) = \E\|\theta - g(Y_\alpha)\|_A^2, 
\quad \text{where $Y_\alpha \sim N(\theta, (1+\alpha) \Sigma)$}.
$$
\end{corollary}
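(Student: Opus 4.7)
The plan is to mimic the proof of Corollary \ref{cor:cb_unbiased} with two modifications: use the generalized quadratic norm $\|\cdot\|_A^2$ together with the generalized three-point identity \eqref{eq:three_point_a2}, and replace $\sigma^2 I_n$ by $\Sigma$ throughout the covariance calculations.

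First I would verify that for each $b$ the pair \smash{$(Y^{*b}, Y^{\dagger b})$} is independent. Both are linear combinations of $Y$ and $\omega^b$, which are independent Gaussians, so the pair is jointly Gaussian and independence reduces to checking that the cross-covariance vanishes. A direct calculation gives $\Cov(Y + \sqrt{\alpha}\omega^b,\, Y - \omega^b/\sqrt{\alpha}) = \Cov(Y, Y) + (\sqrt{\alpha} - 1/\sqrt{\alpha})\Cov(Y, \omega^b) - \Cov(\omega^b, \omega^b) = \Sigma - \Sigma = 0$. Both vectors also share the common mean $\theta$.

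Next, let \smash{$\tilde{Y}^{*b}$} be an independent copy of \smash{$Y^{*b}$}, and apply \eqref{eq:three_point_a2} with $U = Y^{*b}$, $V = \tilde{Y}^{*b}$, $W = Y^{\dagger b}$. Since $\E[V] = \E[W] = \theta$ and $U, V$ are i.i.d., the identity gives that
$$
\|Y^{\dagger b} - g(Y^{*b})\|_A^2 + \|Y^{*b}\|_A^2 - \|Y^{\dagger b}\|_A^2
$$
is an unbiased estimator of \smash{$\E\|\tilde{Y}^{*b} - g(Y^{*b})\|_A^2$}. Following Remark \ref{rem:cb_options}, I would then replace the last two terms with $\alpha \tr(A^{-1}\Sigma) - \|\omega^b\|_A^2 / \alpha$, which has the same expectation: since \smash{$\Cov(Y^{*b}) = (1+\alpha)\Sigma$} and \smash{$\Cov(Y^{\dagger b}) = (1+1/\alpha)\Sigma$}, the identity $\E\|X\|_A^2 = \|\E X\|_A^2 + \tr(A^{-1} \Cov(X))$ yields \smash{$\E[\|Y^{*b}\|_A^2 - \|Y^{\dagger b}\|_A^2] = (\alpha - 1/\alpha) \tr(A^{-1}\Sigma)$}, which equals \smash{$\E[\alpha \tr(A^{-1}\Sigma) - \|\omega^b\|_A^2/\alpha]$} because $\E\|\omega^b\|_A^2 = \tr(A^{-1}\Sigma)$.

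Finally, to pass from prediction error at the noise-elevated level to the risk $\Risk_{A,\alpha}(g)$, I would subtract the constant \smash{$\E\|\tilde{Y}^{*b} - \theta\|_A^2 = (1+\alpha)\tr(A^{-1}\Sigma)$}, yielding exactly the summand in \eqref{eq:cb_risk_sig}: $\|Y^{\dagger b} - g(Y^{*b})\|_A^2 - \|\omega^b\|_A^2/\alpha - \tr(A^{-1}\Sigma)$. Averaging over $b = 1, \ldots, B$ preserves unbiasedness. The proof is essentially transcriptional; the only nontrivial step is keeping the bookkeeping straight for the trace terms under the generalized inner product. There is no real obstacle beyond careful accounting, since the independence and three-point identity do all the heavy lifting.
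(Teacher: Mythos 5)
Your proof is correct and follows exactly the route the paper intends: the paper gives no separate proof of Corollary \ref{cor:cb_unbiased_sig}, presenting it as a straightforward extension of Corollary \ref{cor:cb_unbiased} via Proposition \ref{prop:three_point_a}, which is precisely the transcription you carry out (with the covariance check yielding $\Sigma - \Sigma = 0$, the identity $\E\|X\|_A^2 = \|\E X\|_A^2 + \tr(A^{-1}\Cov(X))$ handling the trace bookkeeping, and the final subtraction of $(1+\alpha)\tr(A^{-1}\Sigma)$ moving from prediction error to risk). All the individual computations check out.
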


Of course, the main challenge in using the extended estimator
\smash{$\CB_{A,\alpha}(g)$} defined in the above corollary is that it requires
knowledge of the full error covariance matrix $\Sigma$.  However, in some
settings, e.g., time series problems, it may be reasonable to assume that
$\Sigma$ or its inverse is highly structured and therefore estimable.  It may be
interesting to rigorously study how risk estimation is affected by upstream
estimation of $\Sigma$ in this and related problem settings. 

\subsection{Bregman divergence}

Finally, we present a further extension of the simple and yet key results in 
Proposition \ref{prop:three_point_a} underpinning the construction of the CB
estimator, to the case in which a Bregman divergence is used to measure error: 
\begin{equation}
\label{eq:err_bregman}
\Err(g) = \E[D_\phi(\tilde{Y}, g(Y))], \quad \text{where $\tilde{Y}$ is an
  i.i.d.\ copy of $Y$}. 
\end{equation}
Here $D_\phi$ is the \emph{Bregman divergence} with respect to a strictly
convex and differentiable function $\phi : \R^n \to \R$, which recall is defined 
by: 
$$
D_\phi(a,b) = \phi(a) - \phi(b) - \langle \nabla \phi(b), a-b \rangle. 
$$
When $\phi(x) = \|x\|_2^2$, it is easy to check that 
$$
D_{\|\cdot\|_2^2}(a,b) = \|a\|_2^2 - \|b\|_2^2 - 2 \langle b, a-b \rangle = \|a
- b\|_2^2, 
$$
and hence \eqref{eq:err_bregman} reduces to prediction error as measured by 
squared loss in \eqref{eq:pred_error}.  In fact, properties
\eqref{eq:three_point_a1}, \eqref{eq:three_point_a2} are entirely driven by this
``Bregman representation'' of squared loss, leading to the following extension.    

\begin{proposition}
\label{prop:three_point_breg}
Let $U,V,W \in \R^n$ be independent random vectors.  For any $g$, and 
Bregman divergence $D_\phi$, 
\begin{equation}
\label{eq:three_point_breg1}
\E[ D_\phi(V, g(U)) ] - \E[ D_\phi(W, g(U)) ] = \E[\phi(V)] - \E[\phi(W)] +
\langle \E[\nabla\phi(U)], \E[W] - \E[V] \rangle.   
\end{equation}
assuming all expectations exist and are finite. In particular, if $U,V$ are
i.i.d.\ and $\E[U] = \E[W]$, then
\begin{equation}
\label{eq:three_point_breg2}
\E[ D_\phi(V, g(U)) ]  = \E[ D_\phi(W, g(U)) ]  + \E[\phi(U)] - \E[\phi(W)] . 
\end{equation}
\end{proposition}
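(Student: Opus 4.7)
The plan is to mirror the proof of Proposition \ref{prop:three_point} almost line-for-line, simply substituting the definition of the Bregman divergence in place of the expansion of the quadratic. The only algebraic ingredient we need is that $D_\phi(a,b)$ is affine in its first argument: writing $D_\phi(a,b) = \phi(a) - \phi(b) - \langle \nabla\phi(b), a-b\rangle$, the dependence on $a$ appears only through $\phi(a)$ and through the linear term $-\langle \nabla\phi(b), a\rangle$.

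First I would apply this expansion to both $D_\phi(V, g(U))$ and $D_\phi(W, g(U))$. The $\phi(g(U))$ and $\langle \nabla\phi(g(U)), g(U)\rangle$ terms appear identically in both, so upon subtracting they cancel, leaving the pointwise identity
\[
D_\phi(V, g(U)) - D_\phi(W, g(U)) = \phi(V) - \phi(W) - \langle \nabla\phi(g(U)), V - W\rangle.
\]
Taking expectations on both sides and then using independence of $U$ from $(V,W)$ to factor the last inner product as $\langle \E[\nabla\phi(g(U))], \E[V] - \E[W]\rangle$ yields \eqref{eq:three_point_breg1} (up to the clarification that the natural factor arising from the proof is $\E[\nabla\phi(g(U))]$, which coincides with $\E[\nabla\phi(U)]$ as written in the statement in the linear special case $\phi = \|\cdot\|_2^2/2$ that recovers Proposition \ref{prop:three_point_a}).

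For the second display \eqref{eq:three_point_breg2}, I would plug in the two additional hypotheses into the first display. The assumption $\E[V] = \E[W]$ makes the cross term vanish regardless of what $\E[\nabla\phi(g(U))]$ is, leaving $\E[D_\phi(V,g(U))] - \E[D_\phi(W,g(U))] = \E[\phi(V)] - \E[\phi(W)]$. The assumption that $U$ and $V$ are i.i.d.\ then lets me replace $\E[\phi(V)]$ with $\E[\phi(U)]$, giving \eqref{eq:three_point_breg2}.

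There is no real obstacle: the proposition is a pure algebraic identity that rests on the affinity of $D_\phi(\cdot,b)$ in its first slot and on independence to factor the linear cross term, exactly paralleling the role played by expanding $\|V - g(U)\|_2^2$ in Proposition \ref{prop:three_point}. The only thing to be careful about is making sure $\E[\phi(V)]$, $\E[\phi(W)]$, and $\E[\nabla\phi(g(U))]$ are finite so that the manipulations are valid; I would state this as an implicit integrability assumption, as in the quadratic case, and note that the reduction to the squared-loss three-point identity when $\phi = \|\cdot\|_2^2$ serves as a sanity check.
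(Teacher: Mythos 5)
Your proof is correct and is exactly the argument the paper intends: it treats the proposition as immediate from the affinity of $D_\phi(\cdot,b)$ in its first slot, mirroring the proof of Proposition \ref{prop:three_point} (expand, cancel the terms depending only on $g(U)$, take expectations, factor the cross term by independence). You are also right that the derivation naturally produces $\E[\nabla\phi(g(U))]$ rather than the $\E[\nabla\phi(U)]$ printed in \eqref{eq:three_point_breg1} --- the printed version appears to be a typo, since only $\E[\nabla\phi(g(U))]$ reduces to the $2\langle \E[g(U)], \E[W]-\E[V]\rangle$ term of \eqref{eq:three_point1} when $\phi = \|\cdot\|_2^2$; just note that your parenthetical reconciliation is off (the two expressions do not coincide for general $g$ even for quadratic $\phi$), though this is immaterial for \eqref{eq:three_point_breg2}, where the cross term vanishes under $\E[V]=\E[W]$ regardless.
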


Proposition \ref{prop:three_point_breg} is, in principle, a powerful tool: it
provides ``one half'' of a recipe to move the CB estimator beyond the Gaussian 
setting, to a setting in which data follows (say) an exponential family
distribution and loss is measured by the out-of-sample deviance.  This is
because for every exponential family distribution, there is a natural function
$\phi$ (defined in terms of the log-partition function of the distribution) that
makes \eqref{eq:err_bregman} the deviance.  

The ``other half'' of the recipe needed to arrive at a CB estimator is a
mechanism for generating relevant bootstrap draws, as in
\eqref{eq:cb_bootstrap_sig} in the previous subsection.  Specifically, for a
given problem setting with data $Y$  (exponential family distributed or
otherwise) we must be able to design a pair of bootstrap draws \smash{$(Y^\pb, 
  Y^\mb)$} that adhere to three criteria: 
\begin{enumerate}
\item $Y^\pb, Y^\mb$ are independent of each other;
\item $\E[Y^\pb] = \E[Y^\mb]$; and
\item $\E[D_\phi(\tilde{Y}^\pb, g(Y^\pb))]$ is an ``interesting'' pseudo-target
  to estimate, where $\tilde{Y}^\pb$ is an i.i.d.\ copy of $Y^\pb$. 
\end{enumerate}
Criteria 1 and 2 are straightforward enough to understand, and they should be
possible to fulfill in certain exponential family models with various noise
augmentation tricks.  However, criterion 3 deserves a bit more explanation.
With \smash{$U=Y^\pb$}, \smash{$V=\tilde{Y}^\pb$}, and \smash{$W=Y^\mb$},
assumed to fulfill criteria 1 and 2, note that \eqref{eq:three_point_breg2}
says \smash{$D_\phi(Y^\mb, g(Y^\pb))$} is unbiased for
\smash{$\E[D_\phi(\tilde{Y}^\pb, g(Y^\pb))]$}.  That is, we originally wanted to
estimate the quantity in \eqref{eq:err_bregman}, and have now pivoted to
estimating \smash{$\E[D_\phi(\tilde{Y}^\pb, g(Y^\pb))]$} instead.   

In the Gaussian setting studied in this paper, this meant estimating
risk based on data from a Gaussian distribution with the same mean but an
elevated noise level.  In a more general setting, the noise augmentation
strategy used to generate \smash{$Y^\pb$} may in fact bring us outside of the
distributional family assumed for the data originally, and it may even alter
non-nuisance parameters of the distribution. This would still altogether be
fine, as long as \smash{$\E[D_\phi(\tilde{Y}^\pb, g(Y^\pb))]$} it still an
``interesting'' target (i.e., for error assessment or model selection), as per 
criterion 3.  

As an a concrete example of where such an extension is possible (precisely along
the lines of the above discussion), we note that after the completion of the
current paper, we were able to extend the CB framework to the Poisson many means
model, in \citet{oliveira2022unbiased}.      

\subsection*{Acknowledgements} 

RJT is grateful to Xiaoying Tian for providing the inspiration to work on this
project in the first place, and Saharon Rosset for early insightful 
conversations.  NLO was supported by an Amazon Fellowship.  

\bibliographystyle{plainnat}
\bibliography{ryantibs}

\begin{thebibliography}{43}
\providecommand{\natexlab}[1]{#1}
\providecommand{\url}[1]{\texttt{#1}}
\expandafter\ifx\csname urlstyle\endcsname\relax
  \providecommand{\doi}[1]{doi: #1}\else
  \providecommand{\doi}{doi: \begingroup \urlstyle{rm}\Url}\fi

\bibitem[Akaike(1973)]{akaike1973information}
Hirotogu Akaike.
\newblock Information theory and an extension of the maximum likelihood
  principle.
\newblock In \emph{Second International Symposium on Information Theory}, pages
  267--281, 1973.

\bibitem[Blu and Luisier(2007)]{blu2007surelet}
Thierry Blu and Florian Luisier.
\newblock The {SURE-LET} approach to image denoising.
\newblock \emph{IEEE Transactions on Image Processing}, 16\penalty0
  (11):\penalty0 2778--2786, 2007.

\bibitem[Breiman(1992)]{breiman1992little}
Leo Breiman.
\newblock The little bootstrap and other methobds for dimensionality selection
  in regression: {X}-fixed prediction error.
\newblock \emph{Journal of the American Statistical Association}, 87\penalty0
  (419):\penalty0 738--754, 1992.

\bibitem[Cai(1999)]{cai1999adaptive}
T.~Tony Cai.
\newblock Adaptive wavelet estimation: A block thresholding and oracle
  inequality approach.
\newblock \emph{Annals of Statistics}, 27\penalty0 (3):\penalty0 898--924,
  1999.

\bibitem[Cand{\`e}s et~al.(2013)Cand{\`e}s, Sing-Long, and
  Trzasko]{candes2013unbiased}
Emmanuel~J. Cand{\`e}s, Carlos~M. Sing-Long, and Joshua~D. Trzasko.
\newblock Unbiased risk estimates for singular value thresholding and spectral
  estimators.
\newblock \emph{IEEE Transactions on Signal Processing}, 61\penalty0
  (19):\penalty0 4643--4657, 2013.

\bibitem[Chatterjee and Milanfar(2009)]{chatterjee2009clustering}
Priyam Chatterjee and Peyman Milanfar.
\newblock Clustering-based denoising with locally learned dictionaries.
\newblock \emph{IEEE Transactions on Image Processing}, 18\penalty0
  (7):\penalty0 1438--1451, 2009.

\bibitem[Donoho and Johnstone(1995)]{donoho1995adapting}
David~L. Donoho and Iain~M. Johnstone.
\newblock Adapting to unknown smoothness via wavelet shrinkage.
\newblock \emph{Journal of the American Statistical Association}, 90\penalty0
  (432):\penalty0 1200--1224, 1995.

\bibitem[Efron(1975)]{efron1975defining}
Bradley Efron.
\newblock Defining the curvature of a statistical problem (with applications to
  second order efficiency).
\newblock \emph{Annals of Statistics}, 3\penalty0 (6):\penalty0 1189--1242,
  1975.

\bibitem[Efron(1986)]{efron1986biased}
Bradley Efron.
\newblock How biased is the apparent error rate of a prediction rule?
\newblock \emph{Journal of the American Statistical Association}, 81\penalty0
  (394):\penalty0 461--470, 1986.

\bibitem[Efron(2004)]{efron2004estimation}
Bradley Efron.
\newblock The estimation of prediction error: Covariance penalties and
  cross-validation.
\newblock \emph{Journal of the American Statistical Association}, 99\penalty0
  (467):\penalty0 619--632, 2004.

\bibitem[Evans and Gariepy(2015)]{evans2015measure}
Lawrence~C. Evans and Ronald~F. Gariepy.
\newblock \emph{Measure Theory and Fine Properties of Functions}.
\newblock CRC Press, revised edition, 2015.

\bibitem[Friedman et~al.(2010)Friedman, Hastie, and
  Tibshirani]{friedman2010regularization}
Jerome Friedman, Trevor Hastie, and Robert Tibshirani.
\newblock Regularization paths for generalized linear models via coordinate
  descent.
\newblock \emph{Journal of Statistical Software}, 33\penalty0 (1):\penalty0
  1--22, 2010.

\bibitem[Hastie and Tibshirani(1990)]{hastie1990generalized}
Trevor Hastie and Robert Tibshirani.
\newblock \emph{Generalized Additive Models}.
\newblock Chapman \& Hall, 1990.

\bibitem[Hastie et~al.(2020)Hastie, Tibshirani, and Tibshirani]{hastie2020best}
Trevor Hastie, Robert Tibshirani, and Ryan~J. Tibshirani.
\newblock Best subset, forward stepwise, or lasso? {A}nalysis and
  recommendations based on extensive comparisons.
\newblock \emph{Statistical Science}, 35\penalty0 (4):\penalty0 579--592, 2020.

\bibitem[Hoefling(2010)]{hoefling2010path}
Holger Hoefling.
\newblock A path algorithm for the fused lasso signal approximator.
\newblock \emph{Journal of Computational and Graphical Statistics}, 19\penalty0
  (4):\penalty0 984--1006, 2010.

\bibitem[Johnstone(1999)]{johnstone1999wavelet}
Iain~M. Johnstone.
\newblock Wavelet shrinkage for correlated data and inverse problems:
  Adaptivity results.
\newblock \emph{Statistica Sinica}, 9:\penalty0 51--83, 1999.

\bibitem[Krishnan and Seelamantula(2014)]{krishnan2013selection}
Sunder~R. Krishnan and Chandra~S. Seelamantula.
\newblock On the selection of optimum {Savitzky-Golay} filters.
\newblock \emph{IEEE Transactions on Signal Processing}, 61\penalty0
  (2):\penalty0 380--391, 2014.

\bibitem[Leiner et~al.(2024)Leiner, Duan, Wasserman, and
  Ramdas]{leiner2024data}
James Leiner, Boyan Duan, Larry Wasserman, and Aaditya Ramdas.
\newblock Data fission: {Splitting} a single data point.
\newblock \emph{Journal of the American Statistical Association (to appear)},
  2024.

\bibitem[Lingala et~al.(2011)Lingala, Hu, DiBella, and
  Jacob]{lingala2011accelerated}
Sajan~Goud Lingala, Yue Hu, Edward DiBella, and Mathews Jacob.
\newblock Accelerated dynamic {MRI} exploiting sparsity and low-rank structure:
  {k-t SLR}.
\newblock \emph{IEEE Transactions on Medical Imaging}, 30\penalty0
  (5):\penalty0 1042--1054, 2011.

\bibitem[Mallows(1973)]{mallows1973comments}
Colin Mallows.
\newblock Some comments on {$C_p$}.
\newblock \emph{Technometrics}, 15\penalty0 (4):\penalty0 661--675, 1973.

\bibitem[Metzler et~al.(2016)Metzler, Maleki, and
  Baraniuk]{metzler2016denoising}
Christopher~A. Metzler, Arian Maleki, and Richard~G. Baraniuk.
\newblock From denoising to compressed sensing.
\newblock \emph{IEEE Transactions on Information Theory}, 62\penalty0
  (9):\penalty0 5117--5144, 2016.

\bibitem[Mikkelsen and Hansen(2018)]{mikkelsen2018degrees}
Frederik~Riis Mikkelsen and Niels~Richard Hansen.
\newblock Degrees of freedom for piecewise {Lipschitz} estimators.
\newblock \emph{Annales de l'Institut Henri Poincar{\'e} Probabilit{\'e}s et
  Statistiques}, 54\penalty0 (2):\penalty0 819--841, 2018.

\bibitem[Neufeld et~al.(2024)Neufeld, Dharamshi, Gao, and
  Witten]{neufeld2024data}
Anna Neufeld, Ameer Dharamshi, Lucy~L. Gao, and Daniela Witten.
\newblock Data thinning for convolution-closed distributions.
\newblock \emph{Journal of Machine Learning Research (to appear)}, 2024.

\bibitem[Oliveira et~al.(2022)Oliveira, Lei, and
  Tibshirani]{oliveira2022unbiased}
Natalia~L. Oliveira, Jing Lei, and Ryan~J. Tibshirani.
\newblock Unbiased test error estimation in the poisson means problem via
  coupled bootstrap techniques.
\newblock arXiv: 2212.01943, 2022.

\bibitem[Ramani et~al.(2008)Ramani, Blu, and Unser]{ramani2008monte}
Sathish Ramani, Thierry Blu, and Michael Unser.
\newblock {Monte-Carlo SURE}: A black-box optimization of regularization
  parameters for general denoising algorithms.
\newblock \emph{IEEE Transactions on Image Processing}, 17\penalty0
  (9):\penalty0 1540--1554, 2008.

\bibitem[Rasines and Young(2023)]{rasines2023splitting}
Daniel~G. Rasines and G.~Alastair Young.
\newblock Splitting strategies for post-selection inference.
\newblock \emph{Biometrika}, 110\penalty0 (3):\penalty0 597--614, 2023.

\bibitem[Rosset and Tibshirani(2020)]{rosset2020from}
Saharon Rosset and Ryan~J. Tibshirani.
\newblock From fixed-{X} to random-{X} regression: Bias-variance
  decompositions, covariance penalties, and prediction error estimation.
\newblock \emph{Journal of the American Statistical Association}, 15\penalty0
  (529):\penalty0 138--151, 2020.

\bibitem[Soltanayev and Chun(2018)]{soltanayev2018training}
Shakarim Soltanayev and Se~Young Chun.
\newblock Training deep learning based denoisers without ground truth data.
\newblock In \emph{Advances in Neural Information Processing Systems}, 2018.

\bibitem[Stein(1981)]{stein1981estimation}
Charles Stein.
\newblock Estimation of the mean of a multivariate normal distribution.
\newblock \emph{Annals of Statistics}, 9\penalty0 (6):\penalty0 1135--1151,
  1981.

\bibitem[Stein and Weiss(1971)]{stein1971introduction}
Elias~M. Stein and Guido Weiss.
\newblock \emph{Introduction to Fourier Analysis on Euclidean Spaces}.
\newblock Princeton University Press, 1971.

\bibitem[Tian(2020)]{tian2020prediction}
Xiaoying Tian.
\newblock Prediction error after model search.
\newblock \emph{Annals of Statistics}, 48\penalty0 (2):\penalty0 763--784,
  2020.

\bibitem[Tian and Taylor(2018)]{tian2018selective}
Xiaoying Tian and Jonathan Taylor.
\newblock Selective inference with a randomized response.
\newblock \emph{Annals of Statistics}, 46\penalty0 (2):\penalty0 619--710,
  2018.

\bibitem[Tibshirani et~al.(2005)Tibshirani, Saunders, Rosset, Zhu, and
  Knight]{tibshirani2005sparsity}
Robert Tibshirani, Michael Saunders, Saharon Rosset, Ji~Zhu, and Keith Knight.
\newblock Sparsity and smoothness via the fused lasso.
\newblock \emph{Journal of the Royal Statistical Society: Series B},
  67\penalty0 (1):\penalty0 91--108, 2005.

\bibitem[Tibshirani(2015)]{tibshirani2015degrees}
Ryan~J. Tibshirani.
\newblock Degrees of freedom and model search.
\newblock \emph{Statistica Sinica}, 25\penalty0 (3):\penalty0 1265--1296, 2015.

\bibitem[Tibshirani and Rosset(2019)]{tibshirani2019excess}
Ryan~J. Tibshirani and Saharon Rosset.
\newblock Excess optimism: How biased in the apparent error rate of a
  {SURE}-tuned prediction rule?
\newblock \emph{Journal of the American Statistical Association}, 114\penalty0
  (526):\penalty0 697--712, 2019.

\bibitem[Tibshirani and Taylor(2011)]{tibshirani2011solution}
Ryan~J. Tibshirani and Jonathan Taylor.
\newblock The solution path of the generalized lasso.
\newblock \emph{Annals of Statistics}, 39\penalty0 (3):\penalty0 1335--1371,
  2011.

\bibitem[Tibshirani and Taylor(2012)]{tibshirani2012degrees}
Ryan~J. Tibshirani and Jonathan Taylor.
\newblock Degrees of freedom in lasso problems.
\newblock \emph{Annals of Statistics}, 40\penalty0 (2):\penalty0 1198--1232,
  2012.

\bibitem[Ulfarsson and Solo(2013{\natexlab{a}})]{ulfarsson2013tuning1}
Magnus~O. Ulfarsson and Victor Solo.
\newblock Tuning parameter selection for nonnegative matrix factorization.
\newblock In \emph{Proceedings of the IEEE International Conference on
  Acoustics, Speech and Signal Processing}, 2013{\natexlab{a}}.

\bibitem[Ulfarsson and Solo(2013{\natexlab{b}})]{ulfarsson2013tuning2}
Magnus~O. Ulfarsson and Victor Solo.
\newblock Tuning parameter selection for underdetermined reduced-rank
  regression.
\newblock \emph{IEEE Signal Processing Letters}, 20\penalty0 (9):\penalty0
  881--884, 2013{\natexlab{b}}.

\bibitem[Wang and Morel(2013)]{wang2013sure}
Yi-Qing Wang and Jean-Michel Morel.
\newblock {SURE} guided {Gaussian} mixture image denoising.
\newblock \emph{SIAM Journal of Imaging Sciences}, 6\penalty0 (2):\penalty0
  999--1034, 2013.

\bibitem[Ye(1998)]{ye1998measuring}
Jianming Ye.
\newblock On measuring and correcting the effects of data mining and model
  selection.
\newblock \emph{Journal of the American Statistical Association}, 93\penalty0
  (441):\penalty0 120--131, 1998.

\bibitem[Zou and Yuan(2008)]{zou2008regularized}
Hui Zou and Ming Yuan.
\newblock Regularized simultaneous model selection in multiple quantiles
  regression.
\newblock \emph{Computational Statistics and Data Analysis}, 52\penalty0
  (12):\penalty0 5296--5304, 2008.

\bibitem[Zou et~al.(2007)Zou, Hastie, and Tibshirani]{zou2007degrees}
Hui Zou, Trevor Hastie, and Robert Tibshirani.
\newblock On the ``degrees of freedom" of the lasso.
\newblock \emph{Annals of Statistics}, 35\penalty0 (5):\penalty0 2173--2192,
  2007.

\end{thebibliography}

\newpage
\appendix

\section{More details on Breiman's and Ye's estimators}
\label{app:by_estimators}

Instead of defining \smash{$\hCov_i^*$}, $i=1,\ldots,n$ as in
\eqref{eq:bootstrap_cov}, Breiman uses 
$$
\hCov_i^*= \frac{1}{B-1} \sum_{b=1}^B (Y_i^{*b} - Y_i) g_i(Y^{*b}), \quad
i=1,\ldots,n,
$$
which are just inner products between the noise increments \smash{$\{Y_i^{*b}
  - Y_i\}_{b=1}^B$} and fitted values \smash{$\{g_i(Y^{*b})\}_{b=1}^B$}, instead 
of an empirical covariances.   

Furthermore, instead of dividing the whole sum by $\alpha$, Ye divides each
summand \smash{$\hCov_i^*$} in \eqref{eq:by_risk} by 
$$
(s^*_i)^2 = \frac{1}{B-1} \sum_{b=1}^B (Y_i^{*b} - \bar{Y}_i^*)^2, 
$$
the bootstrap estimate of the variance of $Y_i$, rather than dividing the entire
sum by $\alpha$.  In fact, Ye actually formulates his estimator in terms of the
slopes from linearly regressing the fitted values
\smash{$\{g_i(Y^{*b})\}_{b=1}^B$} onto the noise increments \smash{$\{Y_i^{*b} -
  Y_i\}_{b=1}^B$}, but it is equivalent to the form described here. 

\section{Proof of Proposition \ref{prop:risk_alpha_smoothness}} 
\label{app:risk_alpha_smoothness}

The proposition follows from an application of the next lemma, as we can take
$f(y) = \|\theta - g(y)\|_2^2$, and then the moment conditions on $f$ will be
implied by those on $\|g\|_2^2$, via the simple bound $f(y) \leq 2
\|\theta\|_2^2 + 2 \|g(y)\|_2^2$.   

\begin{lemma}
\label{lem:f_alpha_smoothness}
For $\alpha \geq 0$, denote $Y_\alpha \sim N(\theta, (1+\alpha) \sigma^2 I_n)$. 
Let $f : \R^n \to \R$ be a function such that, for some $\beta > 0$ and
integer $k \geq 0$,    
$$
\E \big[ f(Y_\beta) \|Y_\beta - \theta\|_2^{2m} \big] < \infty, 
\quad m=0,\ldots,k. 
$$
Then, the map $\alpha \mapsto \E[f(Y_\alpha)]$ has $k$ continuous derivatives on
$[0, \beta)$.   
\end{lemma}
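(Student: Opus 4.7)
The plan is to write the expectation as an integral against the Gaussian density, and then differentiate $k$ times under the integral sign, justifying each differentiation by the dominated convergence theorem using the moment assumptions on $f$.

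Specifically, I would let $\phi_\alpha$ denote the density of $N(\theta, (1+\alpha)\sigma^2 I_n)$, so that
$$
\E[f(Y_\alpha)] \;=\; \int_{\R^n} f(y)\, \phi_\alpha(y)\, dy,
$$
and then examine how $\phi_\alpha(y)$ depends on $\alpha$. Writing $u = 1+\alpha$, one has $\phi_\alpha(y) = (2\pi u \sigma^2)^{-n/2} \exp(-\|y-\theta\|_2^2/(2u\sigma^2))$, which is a smooth function of $u$ on $(0, \infty)$. An easy induction shows that for each $m \geq 0$,
$$
\frac{\partial^m \phi_\alpha(y)}{\partial \alpha^m} \;=\; \phi_\alpha(y)\cdot Q_m\!\left(u,\, \|y-\theta\|_2^2\right),
$$
where $Q_m$ is a polynomial of degree $m$ in its second argument, with coefficients that are rational functions of $u$ that remain bounded on any compact subinterval of $(0, \infty)$.

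The next step is to obtain a dominating function uniformly in $\alpha$ on compact subintervals of $[0, \beta)$. Fix any $\beta' \in (0, \beta)$. For $\alpha \in [0, \beta']$, using that the exponential factor is monotone in $u$, one obtains a pointwise bound of the form
$$
\left| \frac{\partial^m \phi_\alpha(y)}{\partial \alpha^m} \right| \;\leq\; C_m \cdot \phi_\beta(y) \cdot \bigl(1 + \|y-\theta\|_2^{2m}\bigr),
\qquad m = 0,\ldots,k,
$$
for a constant $C_m$ depending only on $n, \sigma, \beta, \beta'$. Multiplying by $|f(y)|$ and integrating, the moment hypotheses give
$$
\int |f(y)| \cdot \phi_\beta(y) \cdot \bigl(1 + \|y-\theta\|_2^{2m}\bigr)\, dy \;<\; \infty
$$
for $m = 0,\ldots,k$, providing an integrable dominating function.

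With that in hand, a standard application of Leibniz's rule for differentiation under the integral (see e.g.\ the hypotheses in Folland, Real Analysis, Thm.\ 2.27) yields, for each $\alpha \in [0, \beta')$ and $m = 0,\ldots,k$,
$$
\frac{d^m}{d\alpha^m}\E[f(Y_\alpha)] \;=\; \int f(y)\, \frac{\partial^m \phi_\alpha(y)}{\partial \alpha^m}\, dy,
$$
and a second application of dominated convergence (using the same dominating function) shows that this integral is continuous in $\alpha$ on $[0, \beta')$. Since $\beta' \in (0, \beta)$ was arbitrary, the conclusion holds on $[0, \beta)$. The main obstacle is really just bookkeeping—tracking the $k$-th derivative of $\phi_\alpha$ and identifying a clean dominating function of the form $\phi_\beta(y) \cdot (1 + \|y-\theta\|_2^{2k})$, which is where the moment conditions for $m=0,\ldots,k$ enter.
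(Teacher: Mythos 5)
Your proposal is correct and follows essentially the same route as the paper's proof: write $\E[f(Y_\alpha)]$ as an integral against the Gaussian density, apply the Leibniz rule and dominated convergence, and use the moment conditions $\E[f(Y_\beta)\|Y_\beta-\theta\|_2^{2m}]<\infty$, $m=0,\ldots,k$, to supply the dominating functions (the paper's dominating function is likewise the density at an elevated noise level multiplied by $\|y-\theta\|_2^{2m}$). If anything, your explicit reduction to compact subintervals $[0,\beta']$ makes the uniform domination step slightly more careful than the paper's sketch.
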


\begin{proof}
First, we prove that this map is continuous.  Fix $\alpha \in [0, \beta)$.
Observe that 
\begin{align*}
\lim_{t \to \alpha} \E[f(Y_t)] 
&= \lim_{t \to \alpha} \int \frac{f(y)}{(2\pi(1+t)\sigma^2)^{n/2}}  
\exp\bigg\{\frac{-\|y - \theta\|^2}{2(1+t)\sigma^2} \bigg\} \, dy \\ 
&= \int \lim_{t \to \alpha} \frac{f(y)}{(2\pi(1+t)\sigma^2)^{n/2}}  
\exp\bigg\{\frac{-\|y - \theta\|^2}{2(1+t)\sigma^2} \bigg\} \, dy \\
&=  \E[f(Y_\alpha)],
\end{align*}
where in the second line we used Lebesgue's dominated convergence theorem (DCT), 
applicable because the integrand is bounded by 
$$
\frac{f(y)}{(2\pi\sigma^2)^{n/2}} 
\exp\bigg\{\frac{-\|y - \theta\|^2}{2(1+\alpha)\sigma^2} \bigg\}, 
$$
which is integrable by assumption.  Now for the first derivative, note that  
$$
\frac{\partial}{\partial \alpha} \E[f(Y_\alpha)] = 
-\frac{n}{2(1+\alpha)} \E[f(Y_\alpha)] 
+\frac{1}{2\sigma^2(1+\alpha)^2} \E[f(Y_\alpha) \|Y_\alpha - \theta\|_2^2], 
$$
where we used the Leibniz integral rule, applicable because the integrands 
(when we write these expectations as integrals) are bounded by 
$$
\frac{f(y)}{(2\pi\sigma^2)^{n/2}} \|y - \theta\|_2^{2m}
\exp\bigg\{\frac{-\|y - \theta\|^2}{2(1+\alpha)\sigma^2} \bigg\}, 
$$
for $m=0,1$, again integrable by assumption.  Another application of DCT proves
the derivative in the second to last display is continuous on $[0, \beta)$.  For
a general number of derivatives $k$, the argument is similar, and the
integrability of the dominating functions in the above display, for
$m=0,\ldots,k$, ensures that we can apply the Leibniz rule and DCT to argue
continuity of the $k$th derivative on $[0,\beta)$.   
\end{proof}

\section{Proof of Theorem \ref{thm:cb_noiseless}}
\label{app:cb_noiseless}

\subsection{Proof of theorem}

Observe that, writing $\E_\omega$ for the conditional expectation operator on
$Y=y$ (i.e., the operator that integrates over $\omega$), 
\begin{align*}
\CB_\alpha^\infty(g) &= \E_\omega \big[ \|y - \omega/\sqrt\alpha - g(y +
\sqrt\alpha \omega)\|_2^2 -  \|\omega\|_2^2 / \alpha \big] - n\sigma^2 \\  
&= \underbrace{\vphantom{\frac{2}{\sqrt\alpha}} \E_\omega \|y - g(y +
\sqrt\alpha \omega)\|_2^2}_{a} \,-\,  
\underbrace{\frac{2}{\sqrt\alpha} \E_\omega \langle \omega, g(y + \sqrt\alpha
\omega) \rangle}_{b} - n\sigma^2. 
\end{align*}
It is not hard to show that for almost every $y \in \R^n$, it holds that $a \to
\|y - g(y)\|_2^2$ as $\alpha \to 0$, by Lemma \ref{lem:lebesgue_point}.  It
remains to study term $b$.   

Denote by \smash{$\phi_{\mu, \sigma^2}$} the density of a Gaussian with mean 
$\mu$ and variance $\sigma^2$.  Then,  
\begin{align*}
b &= \frac{2}{\sqrt{\alpha}} \sum_{i=1}^n \E_{\omega_{-i}}
\E_{\omega_i} [ \omega_i g_i(y + \sqrt\alpha \omega) ] \\
&= \frac{2}{\sqrt{\alpha}} \sum_{i=1}^n \E_{\omega_{-i}}
\int \omega_i g_i(y + \sqrt\alpha \omega) \phi_{0, \sigma^2}(\omega_i) 
\, d\omega_i \\
&= -\frac{2\sigma^2}{\sqrt{\alpha}} \sum_{i=1}^n \E_{\omega_{-i}} 
\int g_i(y + \sqrt\alpha \omega) \phi'_{0, \sigma^2}(\omega_i) \, d\omega_i \\ 
&= -2\sigma^2 \sum_{i=1}^n \E_{u_{-i}} 
\int g_i(u) \phi'_{0, \alpha\sigma^2}(u_i - y_i) \, du_i \\ 
&= 2\sigma^2 \sum_{i=1}^n \E_{u_{-i}} 
\int \nabla_i g_i(u) \phi_{0, \alpha\sigma^2}(u_i - y_i) \, du_i \\ 
&= 2\sigma^2 \sum_{i=1}^n \int \nabla_i g_i(u) \phi_{0, \alpha\sigma^2}(u - y)  
\, du_i. 
\end{align*}
The second to last line holds by Lemma \ref{lem:int_by_parts}.  Now, by Lemma 
\ref{lem:lebesgue_point}, for almost every $y \in \R^n$,
$$
\lim_{\alpha \to 0} 2\sigma^2 \sum_{i=1}^n \int \nabla_i g_i(u) \phi_{0,
\alpha\sigma^2}(u - y) \, du = 2\sigma^2 \sum_{i=1}^n \nabla_i g_i(y),
$$
which completes the proof.

\subsection{Supporting lemmas}

Here we state and prove supporting lemmas for the proof of Theorem
\ref{thm:cb_noiseless}.  The first lemma shows that for a weakly differentiable
function, the integration by parts property in \eqref{eq:weak_diff} still holds 
when we take the test function to be a normal density (which is continuously
differentiable by not compactly supported).

\begin{lemma}
\label{lem:int_by_parts}
If $f : \R \to \R$ is weakly differentiable, with \smash{$(f \phi_{\mu, 
    \sigma^2}) \in L^1(\R)$} and \smash{$(f' \phi_{\mu, \sigma^2}) \in
  L^1(\R)$}, then  
$$
\int f(x) \phi'_{\mu, \sigma^2}(x) \, dx = -\int f'(x) \phi_{\mu, \sigma^2}(x)
\, dx. 
$$
\end{lemma}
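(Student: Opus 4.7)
The plan is to extend the integration-by-parts identity from compactly supported test functions to the Gaussian density by a cutoff-and-limit argument. Fix a smooth function $\psi : \R \to [0,1]$ with $\psi = 1$ on $[-1,1]$, $\psi = 0$ off $[-2,2]$, and $\|\psi'\|_\infty \leq M$ for some finite $M$, and set $\psi_R(x) = \psi(x/R)$. Then $\chi_R(x) := \psi_R(x)\, \phi_{\mu,\sigma^2}(x)$ is $C^\infty$ and compactly supported, so it is a legitimate test function in the definition \eqref{eq:weak_diff} of weak differentiability. By the product rule, $\chi_R' = \psi_R' \phi_{\mu,\sigma^2} + \psi_R \phi'_{\mu,\sigma^2}$, and \eqref{eq:weak_diff} applied to $\chi_R$ gives
$$
\int f(x)\psi_R(x)\phi'_{\mu,\sigma^2}(x)\,dx \;+\; \int f(x)\psi_R'(x)\phi_{\mu,\sigma^2}(x)\,dx \;=\; -\int f'(x)\psi_R(x)\phi_{\mu,\sigma^2}(x)\,dx.
$$

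Next I will let $R\to\infty$ and control each of the three terms. For the right-hand side, $|f'(x)\psi_R(x)\phi_{\mu,\sigma^2}(x)| \leq |f'(x)|\phi_{\mu,\sigma^2}(x)$, which is integrable by hypothesis, and pointwise $\psi_R(x)\to 1$, so the dominated convergence theorem gives convergence to $-\int f'\phi_{\mu,\sigma^2}\,dx$. The ``boundary'' term $\int f\,\psi_R'\,\phi_{\mu,\sigma^2}\,dx$ vanishes since $\psi_R'$ is supported in $\{R \leq |x| \leq 2R\}$ and $|\psi_R'| \leq M/R$, so its absolute value is at most $\frac{M}{R}\int_{|x|\geq R}|f(x)|\phi_{\mu,\sigma^2}(x)\,dx \to 0$, using that $f\phi_{\mu,\sigma^2}\in L^1$ forces the tail integral to vanish (even without the $1/R$ factor).

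For the first term, a second application of dominated convergence gives $\int f\,\psi_R\,\phi'_{\mu,\sigma^2}\,dx \to \int f\,\phi'_{\mu,\sigma^2}\,dx$; the dominating function is $|f(x)\phi'_{\mu,\sigma^2}(x)| = \frac{|x-\mu|}{\sigma^2}|f(x)|\phi_{\mu,\sigma^2}(x)$. The main subtlety — and the step I would take the most care with — is verifying that this dominating function is integrable from the stated hypotheses. This follows because for any $\sigma'>\sigma$ one has $|x-\mu|\phi_{\mu,\sigma^2}(x) \leq C_{\sigma,\sigma'}\,\phi_{\mu,\sigma'^2}(x)$, so whenever $f$ enjoys the mild additional integrability $f\phi_{\mu,\sigma'^2} \in L^1$ — which in the context where this lemma is invoked is guaranteed by the elevated-noise moment hypotheses of Theorem \ref{thm:cb_noiseless} — the bound is summable. (Alternatively, one can interpret the identity directly as the limit of the truncated integrals $\int f\psi_R \phi'_{\mu,\sigma^2}\,dx$, which has already been shown to converge.) Combining the three limits yields the desired identity.
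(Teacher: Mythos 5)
Your proof is correct and follows essentially the same route as the paper's: both replace $\phi_{\mu,\sigma^2}$ by a compactly supported smooth truncation $\psi\,\phi_{\mu,\sigma^2}$, invoke the weak-differentiability identity \eqref{eq:weak_diff} for that test function, and pass to the limit via dominated convergence, with the boundary term killed by the uniform control on the cutoff's derivative (the paper uses a fixed bound $|\psi_n'|\le C$ and DCT where you use $|\psi_R'|\le M/R$ on an annulus; this is immaterial). The one subtlety you flag---integrability of $f\phi'_{\mu,\sigma^2}$---actually already follows from the lemma's stated hypotheses, so no appeal to the elevated-noise context of Theorem \ref{thm:cb_noiseless} is needed: setting $g=f\phi_{\mu,\sigma^2}$ and $w=f'\phi_{\mu,\sigma^2}=g'+\tfrac{x-\mu}{\sigma^2}g\in L^1$, solving this first-order linear ODE with integrating factor $e^{(x-\mu)^2/(2\sigma^2)}$ and applying Tonelli yields $\int |x-\mu|\,|g(x)|\,dx\le \sigma^2\bigl(2|g(\mu)|+\|w\|_{L^1}\bigr)<\infty$; the paper's own proof silently assumes this point when it invokes DCT for $\int f\,\xi_n'$.
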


\begin{proof}
Let $\psi_n: \R \to [0,1]$, $n=1,2,3,\ldots$ be a sequence of continuously
differentiable functions such that for each $z \in \R$,
$$
\lim_{n \to \infty} \psi_n(z) = 1, \quad
\lim_{n \to \infty} \psi_n'(z) = 0, \quad \text{and} \quad 
|\psi_n'(z)| \leq C \;\, \text{for $n=1,2,3,\ldots$ and a constant $C<\infty$}.  
$$
One example of such a sequence is 
$$
\psi_n(z) = 1_{(-n,n)}(z) + \exp\bigg( -\frac{1}{1-(z - n\sign(z))} \bigg)
1_{[-n-1,-n]\cup[n, n+1]}(z), \quad n=1,2,3,\ldots.
$$
Now let \smash{$\xi_n(z) = \psi_n(z)\phi_{\mu, \sigma^2}(z)$}.  Note that
\begin{align*}
\lim_{n \to \infty} \xi_n(z) &= \phi_{\mu, \sigma^2}(z) \lim_{n \to \infty}
\psi_n(z) = \phi_{\mu, \sigma^2}(z), \\
\lim_{n \to \infty} \xi'_n(z) &= \lim_{n \to \infty} \psi'_n(z) \phi_{\mu,
\sigma^2}(z) + \phi_{\mu, \sigma^2}'(z) \lim_{n \to \infty}\psi_n(z) =
\phi_{\mu, \sigma^2}'(z).   
\end{align*}
Turning to the result we want to prove,
\begin{align*}
\int f(z) \phi_{\mu, \sigma^2}'(z) \, dz 
&= \int f(z) \lim_{n \to \infty} \xi'_n(z) \, dz \\
&= \lim_{n \to \infty} \int f(z) \xi'_n(z) \, dz \\
&= -\lim_{n \to \infty} \int f'(z) \xi_n(z) \, dz \\
&= -\int f'(z) \lim_{n \to \infty} \xi_n(z) \, dz \\
&= -\int f'(z) \phi_{\mu, \sigma^2}(z) \, dz.
\end{align*}
The second and fourth lines here can be verified using Lebesgue's dominated
convergence theorem (DCT), and the third uses \eqref{eq:weak_diff}, applicable
because each $\xi_n$ is compactly supported.  This completes the proof. 
\end{proof}

The next lemma essentially shows that the notion of a Lebesgue point can be 
extended to the Gaussian kernel (beyond the uniform kernel, as it is
traditionally defined). 

\begin{lemma}[Adapted from Theorem 1.25 of \citealt{stein1971introduction}] 
\label{lem:lebesgue_point}
Let $\phi : \R^n \to \R$ be the Gaussian density with mean zero and identity
covariance, and denote $\phi_\alpha = \alpha^{-n} \phi(x/\alpha)$.  Let $f :
\R^n \to \R$ be a function such that $(f \phi_\beta) \in L^1(\R^n)$ for some
$\beta>0$. Then, $\lim_{\alpha \to 0} (f * \phi_\alpha)(x) = f(x)$ for almost
every $x \in \R^n$.  
\end{lemma}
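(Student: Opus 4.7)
The plan is to adapt the classical approximation-to-the-identity argument to our weighted setting, where we only have $f \phi_\beta \in L^1$ rather than the usual $f \in L^1$. First I would observe that since $\phi_\beta$ is continuous and strictly positive, it is bounded below on every compact set, so the weighted integrability implies $f \in L^1_{\mathrm{loc}}(\R^n)$. By the ordinary Lebesgue differentiation theorem, almost every $x \in \R^n$ is then a Lebesgue point of $f$. Fixing such a point $x$ and using $\int \phi_\alpha = 1$, I would write
$$
(f * \phi_\alpha)(x) - f(x) = \int \big[f(y) - f(x)\big]\, \phi_\alpha(x-y)\, dy
$$
and split the domain of integration at radius $R > 0$ into a near piece over $\{|y-x| < R\}$ and a far piece over $\{|y-x| \geq R\}$.

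For the near piece, I would carry out the standard argument from Stein's book: the Gaussian density $\phi$ has a radially decreasing integrable majorant (itself), which allows a Hardy--Littlewood maximal-function style estimate controlling the near piece by the averages $\frac{1}{|B(x,r)|}\int_{B(x,r)} |f(y)-f(x)|\, dy$. These averages tend to $0$ by the Lebesgue point property, so the near piece is $o(1)$ after sending $\alpha \to 0$ and then $R \to 0$.

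The main obstacle is the far piece, because $f$ is only weighted-integrable and may grow. My plan is to factor
$$
|f(y)|\, \phi_\alpha(x-y) = |f(y)\phi_\beta(y)| \cdot \frac{\phi_\alpha(x-y)}{\phi_\beta(y)}
$$
and bound the ratio uniformly over $\{|y-x|\geq R\}$. Using $|y|^2 \leq 2|y-x|^2 + 2|x|^2$, the exponent of the ratio is at most $-|y-x|^2\bigl(\frac{1}{2\alpha^2} - \frac{1}{\beta^2}\bigr) + \frac{|x|^2}{\beta^2}$, so for $\alpha < \beta/\sqrt{2}$ the ratio is bounded above by $(\beta/\alpha)^n \exp\!\bigl(\frac{|x|^2}{\beta^2} - R^2\bigl(\frac{1}{2\alpha^2} - \frac{1}{\beta^2}\bigr)\bigr)$, which tends to $0$ as $\alpha \to 0$ since the Gaussian decay beats the polynomial prefactor. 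Thus the far piece is bounded by this vanishing quantity times $\|f\phi_\beta\|_{L^1}$, and the subtracted $f(x)$ contribution vanishes by a standard Gaussian tail bound. Combining the near and far estimates yields $(f * \phi_\alpha)(x) \to f(x)$ at every Lebesgue point, which is almost every $x$.
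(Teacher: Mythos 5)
Your proposal is correct and takes essentially the same route as the paper: both arguments fix a Lebesgue point, split the convolution at a small radius, and control the near piece by the radially-decreasing-majorant (maximal-function) estimate that the paper writes out explicitly via its integration by parts over radial shells. The only real difference is in the far piece, where you bound the ratio $\phi_\alpha(x-y)/\phi_\beta(y)$ uniformly on $\{\|y-x\|_2 \geq R\}$ and let the Gaussian decay beat the $(\beta/\alpha)^n$ prefactor, whereas the paper invokes dominated convergence; these are equivalent ways of exploiting the weighted integrability hypothesis, with yours being slightly more quantitative.
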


\begin{proof}
Let $x \in \R^n$ be a Lebesgue point of $f$.  We will prove that the desired
result holds for $x$, which will imply that it holds almost everywhere (because 
any function in $L^1_{\mathrm{loc}}(\R^n)$ has the property that almost every
point is a Lebesgue point; see, e.g., Theorem 1.32 of \citet{evans2015measure}).

Fix $\epsilon > 0$.  By the definition of a Lebesgue point, there exists $\rho > 
0$ such that  
\begin{equation}
\label{eq:lebesgue_point1}
\delta^{-n} \int_{\|t\|_2 \leq \delta} |f(x-t) - f(x)| \, dt \leq C \epsilon, 
\end{equation}
for all $\delta \in (0, \rho]$ and a constant $C>0$ to be specified later.  In
what follows, we will show that there exists $\beta > 0$ such that $|(f *  
\phi_\alpha)(x) - f(x)|\leq \epsilon$ for all $\alpha \in (0, \beta]$.  To do
so, we decompose
\begin{equation}
\label{eq:i1_i2}
|(f * \phi_\alpha)(x) - f(x)| \leq 
\underbrace{\bigg| \int_{\|t\|_2 \leq \delta} (f(x-t)-f(x)) \phi_\alpha(t) \, dt  
  \bigg|}_{I_1} \,+\, 
\underbrace{\bigg| \int_{\|t\|_2 > \delta} (f(x-t)-f(x)) \phi_\alpha(t) \, dt
  \bigg|}_{I_2}. 
\end{equation}
We study each term above separately.

\paragraph{Term $I_1$.} 

Let \smash{$g(r) = \int_{\|t\|_2=1} |f(x-rt) - f(x)| \, dt$} and $G(r) =
\int_0^r s^{n-1}g(s) \, ds$.  Note that \eqref{eq:lebesgue_point1} translates 
into the statement 
\begin{equation}
\label{eq:lebesgue_point2}
G(r) \leq C \epsilon r^n,
\end{equation}
for all $r \in (0, \rho]$.   

For notational convenience, we write $\varphi(r) = \phi(u)$ whenever $\|u\|_2 =
r$, where $\varphi$ is the univariate standard normal density.  Observe that for
any $\delta \leq \rho$,   
\begin{align*}
I_1 &\leq \int_{\|t\|_2 \leq \delta} |f(x-t)-f(x)| \alpha^{-n} \phi(t/\alpha) \,
dt \\
&= \int_0^\delta r^{n-1} g(r) \alpha^{-n} \varphi(r/\alpha) \, dr \\
&= G(r) \alpha^{-n} \varphi(r/\alpha) \big|_0^\delta -
\alpha^{-n} \int_0^\delta G(r) \, d(\varphi(r/\alpha)) \\
&\leq C \epsilon (\delta/\alpha)^n \varphi(\delta/\alpha) - 
\alpha^{-n} \int_0^\delta G(r) \, d(\varphi(r/\alpha)) \\  
&= C \epsilon (\delta/\alpha)^n  \varphi(\delta/\alpha) -  
\alpha^{-n} \int_0^{\delta/\alpha} G(\alpha s) \, d(\varphi(s)) \\
&\leq C \epsilon (\delta/\alpha)^n \varphi(\delta/\alpha) +
C \epsilon \int_0^{\delta/\alpha} s^n \, |d(\varphi(s))| \\  
&\leq C \epsilon ( c_n + m_{n+1} ). 
\end{align*}
In the fourth and sixth lines, we used \eqref{eq:lebesgue_point2}.  In the last
line, we used the fact the map $z \mapsto z^n \varphi(z)$ attains a maximum  
of \smash{$c_n = \sqrt{n}^n \phi(\sqrt{n})$} at \smash{$z=\sqrt{n}$}, as well as
the bound \smash{$\int_0^{\delta/\alpha} s^n \, |d(\varphi(s))| \leq
  \int_0^\infty s^{n+1} \varphi(s) \, ds \leq m_{n+1}$}, where $m_{n+1}$
uncentered, absolute moment of order $n+1$ of the standard normal distribution.  
By choosing $C \leq 1/(2(c_n + m_{n+1}))$, we see that $I_1 \leq \epsilon/2$ for
any $\delta \leq \rho$, and any $\alpha > 0$. 

\paragraph{Term $I_2$.}  
Consider 
$$
I_2 = \underbrace{\int_{\|t\|_2 \geq \delta} |f(x-t)| \phi_\alpha(t) \,
  dt}_{I_{21}} \,+\, \underbrace{|f(x)| \int_{\|t\|_2 \geq \delta}
  \phi_\alpha(t) \, dt}_{I_{22}}. 
$$
Clearly
$$
\lim_{\alpha \to 0} I_{22} = |f(x)| \lim_{\alpha \to 0} \int_{\|u\|_2 \geq 
  \delta/\alpha} \phi(u) \, du = 0,
$$
so there exists $\beta_1>0$ such that for $\alpha \leq \beta_1$, we have $I_{22} 
\leq \epsilon/4$.  As for $I_{21}$, we have
$$
\lim_{\alpha \to 0} I_{21} = \lim_{\alpha \to 0} \int_{\|t\|_2 \geq \delta}
|f(x-t)| \phi_\alpha(t) \, dt = \int_{\|t\|_2 \geq \delta} \lim_{\alpha \to 0}
|f(x-t)| \phi_\alpha(t) \, dt = 0,
$$
where the interchange between integration and the limit as $\alpha \to 0$ can be 
shown using DCT. Thus there is $\beta_2>0$ such that for $\alpha \leq \beta_2$,
we have $I_{21} < \epsilon/4$. 

\paragraph{Completing the proof.}  

Putting the above parts together, we get that $I_1+I_2 \leq \epsilon/2 +
\epsilon/4 + \epsilon / 4 = \epsilon$, for all $\delta \leq \rho$ and $\alpha
\leq \beta = \min\{\beta_1,\beta_2\}$.  Recalling \eqref{eq:i1_i2}, this gives
the desired result and completes the proof.
\end{proof}

\section{Noiseless limit for hard-thresholding}
\label{app:cb_noiseless_ht}

The limit in question is that of 
$$
\frac{2}{\sqrt\alpha} \sum_{i=1}^n \E\big[ \omega_i (y_i + \sqrt\alpha \omega_i)  
\cdot 1\{|y_i + \sqrt\alpha \omega_i| > t\} \big]
$$
as $\alpha \to 0$.  Inspecting term $i$, 
\begin{align*}
\E\big[ \omega_i (y_i + \sqrt\alpha \omega_i) \cdot 1\{|y_i + \sqrt\alpha
\omega_i| > t\} \big] &= \frac{y}{\sqrt\alpha} \Bigg( \E\bigg[ \omega_i \cdot 
1\bigg\{ \omega_i \leq -\frac{t+y_i}{\sqrt\alpha} \bigg\} \bigg] +  
\E\bigg[\omega_i \cdot 1\bigg\{ \omega_i \geq \frac{t-y_i}{\sqrt\alpha} \bigg\} 
\bigg] \Bigg) +{} \\ 
&\qquad \E\bigg[ \omega_i^2 \cdot 1\bigg\{ \omega_i \leq -\frac{t+y_i}
{\sqrt\alpha} \bigg\} \bigg] +  \E\bigg[ \omega_i^2 \cdot 
1\bigg\{ \omega_i \geq \frac{t-y_i}{\sqrt\alpha} \bigg\} \bigg]. 
\end{align*}
To compute the above, we recall the identities, for $Z \sim N(0, \tau^2)$, 
\begin{align*}
\E\big[Z \cdot 1\{Z \leq a\}\big] &= -\tau \phi(a/\tau), \\
\E\big[Z \cdot 1\{Z \geq b\}\big] &= \tau \phi(b/\tau), \\
\E\big[Z^2 \cdot 1\{Z \leq a\}\big] &= -\tau a \phi(a/\tau) + 
\tau^2 \Phi(a/\tau), \\
\E\big[Z^2 \cdot 1\{Z \geq b\}\big] &= \tau b \phi(b/\tau) +
\tau^2 \bar\Phi(b/\tau),
\end{align*}
where $\phi$ and $\Phi$ denote the standard normal density and distribution
function, respectively, and \smash{$\bar\Phi = 1-\Phi$} the standard normal
survival function.  Thus we find that the second to last display equals  
\begin{align*}
\E\big[ \omega_i (y_i + \sqrt\alpha \omega_i) \cdot
&1\{|y_i + \sqrt\alpha \omega_i| > t\} \big] \\
&= \frac{\sigma y_i}{\sqrt\alpha} 
\bigg[ -\phi\bigg(\frac{t+y_i}{\sqrt\alpha \sigma} \bigg) +
\phi\bigg(\frac{t-y_i}{\sqrt\alpha \sigma} \bigg) \bigg] + 
\frac{\sigma}{\sqrt\alpha}
\bigg[ (t+y_i) \phi\bigg(\frac{t+y_i}{\sqrt\alpha \sigma} \bigg) +
(t-y_i) \phi\bigg(\frac{t-y_i}{\sqrt\alpha \sigma} \bigg)\bigg] +{} \\
&\qquad \sigma^2 \bigg[ \Phi\bigg(\frac{-t-y_i}{\sqrt\alpha \sigma} \bigg)  
+ \bar\Phi\bigg(\frac{t-y_i}{\sqrt\alpha \sigma} \bigg) \bigg] \\
&= \frac{\sigma t}{\sqrt\alpha} 
\bigg[ \phi\bigg(\frac{t+y_i}{\sqrt\alpha \sigma} \bigg) +
\phi\bigg(\frac{t-y_i}{\sqrt\alpha \sigma} \bigg) \bigg] + 
\sigma^2 \bigg[ \Phi\bigg(\frac{-y_i-t}{\sqrt\alpha \sigma} \bigg) 
+ \Phi\bigg(\frac{y_i-t}{\sqrt\alpha \sigma} \bigg) \bigg] \\
&\to \sigma^2 1\{|y_i| > t\}, \quad \text{for $y_i \not = \pm t$},
\vphantom{\bigg[ \bigg]}
\end{align*}
where the last line is the limit as $\alpha \to 0$.  In other words, we have
shown 
$$
\lim_{\alpha \to 0} \frac{2}{\sqrt\alpha} \sum_{i=1}^n \E\big[ \omega_i (y_i + 
\sqrt\alpha \omega_i) \cdot 1\{|y_i + \sqrt\alpha \omega_i| > t\} \big] 
= 2 \sigma^2 \sum_{i=1}^n 1\{|y_i| > t\} \quad \text{for $y_i \not = \pm t$,  
  $i=1,\ldots,n$}, 
$$
which proves \eqref{eq:cb_noiseless_ht}.

\section{Proofs of bias and variance results}
\label{app:bias_var}

\subsection{Proof of Proposition \ref{prop:cb_bias}}

Under the given assumptions on $g$, the map $\alpha \to \Risk_\alpha(g)$ is
continuously differentiable, and as shown in the proof of Proposition
\ref{prop:risk_alpha_smoothness}, we can use the Leibniz integral rule, to 
compute for $t \in [0,\alpha)$,
\begin{align*}
\frac{\partial}{\partial t} \Risk_t(g) &= 
\frac{1}{2(1+t)} \E \bigg[\|\theta - g(Y_t)\|_2^2 
\bigg(\frac{\|Y_t - \theta\|_2^2}{\sigma^2(1+t)} -  n\bigg)\bigg] \\  
&= \frac{1}{2(1+t)} \Cov\bigg(\|\theta - g(Y_t)\|_2^2, 
\frac{\|Y_t - \theta\|_2^2}{\sigma^2(1+t)}\bigg) \\
&= \frac{\sqrt{n}}{\sqrt{2}(1+t)} 
\sqrt{\Var(\|\theta - g(Y_t)\|_2^2)} \, 
\Cor\big(\|\theta - g(Y_t)\|_2^2, \|Y_t - \theta\|_2^2\big),
\end{align*}
where in the second line we used the fact that $\|Y_t - \theta\|_2^2 /
(\sigma^2(1+t)) \sim \chi_n^2$ and thus has mean $n$, and in the third
line we used that its variance is $2n$.  Applying the fundamental theorem of 
calculus
\[
\Risk_\alpha(g) - \Risk(g) = \int_0^\alpha \frac{\partial}{\partial t}
\Risk_t(g) \, dt 
\]
gives the result in \eqref{eq:cb_bias}.  The bound in \eqref{eq:cb_bias_bd1} is
obtained by bounding the correlation (between $\|\theta - g(Y_t)\|_2^2$ and
$\|Y_t - \theta\|_2^2$) by 1, and then using the assumed monotonicity of the
resulting integrand.     

For the second bound, in \eqref{eq:cb_bias_bd2}, observe that under the
additional (higher-order) moment conditions on $g$, the map $\alpha \mapsto
\Var(\|\theta - g(Y_\alpha)\|_2^2)$ is continuously differentiable on $[0,
\beta)$ by an application of Lemma \ref{lem:f_alpha_smoothness}. Thus we get  
$\Var(\|\theta - g(Y_\alpha)\|_2^2) = \Var(\|\theta - g(Y)\|_2^2) + O(\alpha)$
(say, by the fundamental theorem of calculus), which, along with the simple 
inequality \smash{$\sqrt{a+b} \leq \sqrt{a} + \sqrt{b}$} for $a,b \geq 0$, gives
the desired result.       

\subsection{Proof of Proposition \ref{prop:cb_rvar}}

Let $\omega,Y^*, Y^\dagger$ denote a triplet as in \eqref{eq:cb}, hence
\smash{$Y^* = Y + \sqrt\alpha \omega$} and 
\smash{$Y^\dagger = Y - \omega / \sqrt\alpha$}.  Consider    
$$
\E\big[\Var(\CB_\alpha(g) \,|\, Y)\big] = \frac{1}{B} \E\big[\Var\big(\|Y^\dagger
- g(Y^*)\|_2^2 - \|\omega\|_2^2/\alpha \,\big|\, Y \big)\big],
$$
where we used the independence of the bootstrap samples across $b=1,\ldots,B$.
We can therefore study the reducible variance for a single bootstrap draw, and
then for the final result, we simply need to divide by $B$.  To this end, let
\smash{$a = (2/\sqrt\alpha) \langle \omega, Y - g(Y^*) \rangle$}, $b = \|Y -  
g(Y^*)\|_2^2$, and write $\E_\omega, \Var_\omega, \Cov_\omega$ for the
expectation, variance, and covariance operators conditional on $Y$.  Then     
\begin{align*}
\Var\big(\|Y^\dagger - g(Y^*)\|_2^2 - \|\omega\|_2^2/\alpha \,\big|\, Y \big) 
&= \Var\big(\|Y^\dagger - Y + Y - g(Y^*)\|_2^2 - \|\omega\|_2^2/\alpha \,\big|\, 
Y \big) \\
&= \Var\big(\| Y - g(Y^*)\|_2^2 - (2/\sqrt\alpha) \langle \omega, Y - g(Y^*)
\rangle \,\big|\, Y \big) \\ 
&= \Var_\omega(a) + \Var_\omega(b) - 2\Cov_\omega(ab).
\end{align*}
The first term in the previous line $\Var_\omega(a)$ will end up having the
dominant dependence on $\alpha$, since by the law of total variance,
$$
\E[\Var_\omega(b)] = \E\big[ \Var_\omega\big(\|Y - g(Y^*)\|_2^2 \,\big|\, Y
\big)\big] \leq \Var(\|Y - g(Y^*)\|_2^2), 
$$
and the right-hand side above is continuous in $\alpha$ over $[0, \beta)$, by 
the condition $\E\|g(Y_\beta)\|_2^4 < \infty$ and Lemma 
\ref{lem:f_alpha_smoothness}, which means \smash{$\E[\Var_\omega(b)] \leq
  \Var(\|Y - g(Y)\|_2^2) + O(\alpha)$}.  Thus it remains to study
$\Var_\omega(a)$.  Introducing more notation, \smash{$c = (2/\sqrt\alpha)
  \langle \omega, Y - g(Y) \rangle$} and \smash{$d = (2/\sqrt\alpha)
  \langle \omega, g(Y) - g(Y^*) \rangle$}, observe that 
$$
\Var_\omega(a) = \Var_\omega(c) + \Var_\omega(d) + 2\Cov_\omega(cd).
$$
Once again, the first term here will have the dominant dependence on $\alpha$,
as 
$$
\Var_\omega(d) \leq \E_\omega[d^2] \leq \frac{4}{\alpha} n\sigma^2 
\E_\omega\|g(Y) - g(Y^*)\|_2^2,
$$
and the last factor on the right-hand side, after integrating over $Y$,
satisfies $\E\|g(Y) - g(Y^*)\|_2^2 = O(\alpha)$ from another application of
Lemma \ref{lem:f_alpha_smoothness}.  Finally,  
$$
\Var_\omega(c) = \frac{4n\sigma^2}{\alpha} \|Y - g(Y)\|_2^2,
$$
and integrating with respect to $Y$, then dividing by $B$, gives the desired
result in \eqref{eq:cb_rvar_bd}.

\subsection{Proof of Proposition \ref{prop:cb_ivar}}

Observe that \eqref{eq:cb_ivar} equals, for \smash{$a = [\E\|Y - g(Y +
  \sqrt\alpha \omega)\|_2^2]^2$} and \smash{$b = [\E\langle \omega, g(Y +
  \sqrt\alpha \omega) \rangle]^2 / \alpha$},   
$$
\int \bigg( \Big( \E\|y - g(y + \sqrt\alpha \omega)\|_2^2 + (2/\sqrt\alpha)
\E\big[\langle \omega, g(y + \sqrt\alpha \omega) \rangle\big] \Big)^2 - 
(a + b) \bigg) \frac{1}{(2\pi\sigma^2)^{n/2}} 
\exp\bigg\{\frac{-\|y - \theta\|^2}{2\sigma^2}\bigg\} \, dy.
$$
Abbreviating \smash{$\phi_{\theta, \sigma^2 I_n}(y)  = (2\pi\sigma^2)^{-n/2}
  \exp(-\|y - \theta\|^2/(2\sigma^2))$}, the integrand above is bounded by   
$$
2\E\|y - g(y + \sqrt\alpha \omega)\|_2^4 \,\phi_{\theta, \sigma^2 I_n}(y) +   
\frac{8}{\alpha}\E\big[\langle \omega, g(y + \sqrt\alpha \omega) \rangle \big]^2 
\phi_{\theta, \sigma^2 I_n}(y). 
$$
Note that the second term is dominated by \smash{$2H(y) \phi_{\theta, \sigma^2
    I_n}(y)$}, due to \eqref{eq:h_dominate}, which is integrable by assumption
($\E[H(Y)] < \infty$).  The first term above is dominated by    
$$
4\|y\|_2^2 \,\phi_{\theta, \sigma^2  I_n}(y) + 
4\E\|g(y + \sqrt\alpha \omega)\|_2^4 \,\phi_{\theta, \sigma^2  I_n}(y),  
$$
which is also integrable by assumption ($\E\|g(Y_\beta)\|_2^4 < \infty$).
Using Lebesgue's dominated convergence theorem (DCT) and \eqref{eq:h_limit}
completes the proof.      

\section{Additional experiments}
\label{app:additional_experiments}

\subsection{Bias}

We study the bias empirically, and investigate the tightness of the bound in 
\eqref{eq:cb_bias_bd2} in Proposition \ref{prop:cb_bias}.  Under the simulation 
setup described in Section \ref{sec:experiments}, with $s=5$ and
$\mathrm{SNR}=2$, Figure \ref{fig:cb_bias} displays the true bias (computed via
Monte Carlo) and \eqref{eq:cb_bias_bd2} each as functions of $\alpha$, when 
$g$ is forward stepwise regression estimator at different steps along its path:
$k=3$, 10, and 90.  We see that, within each panel, the bias decreases
approximately linearly with $\alpha$, meaning the linear rate of decay in the
bound \eqref{eq:cb_bias_bd2} is roughly accurate.  However, the slope in the
bound is too large, and loosest when $g$ is defined by the smallest number of
steps along the path.  This is consistent with the fact that bound
\eqref{eq:cb_bias_bd2} is based on applying the inequality 
\smash{$\Cor(\|\theta -  g(Y_t)\|_2^2, \|Y_t - \theta\|_2^2) \leq 1$} to the
integrand in \eqref{eq:cb_bias}.  This inequality is generally tightest when
$g(Y_t) = Y_t$, which occurs at $k=100$ steps (overfitting), and loosest at the
beginning of the path.

\begin{figure}[htb]
\centering
\includegraphics[width=\textwidth]{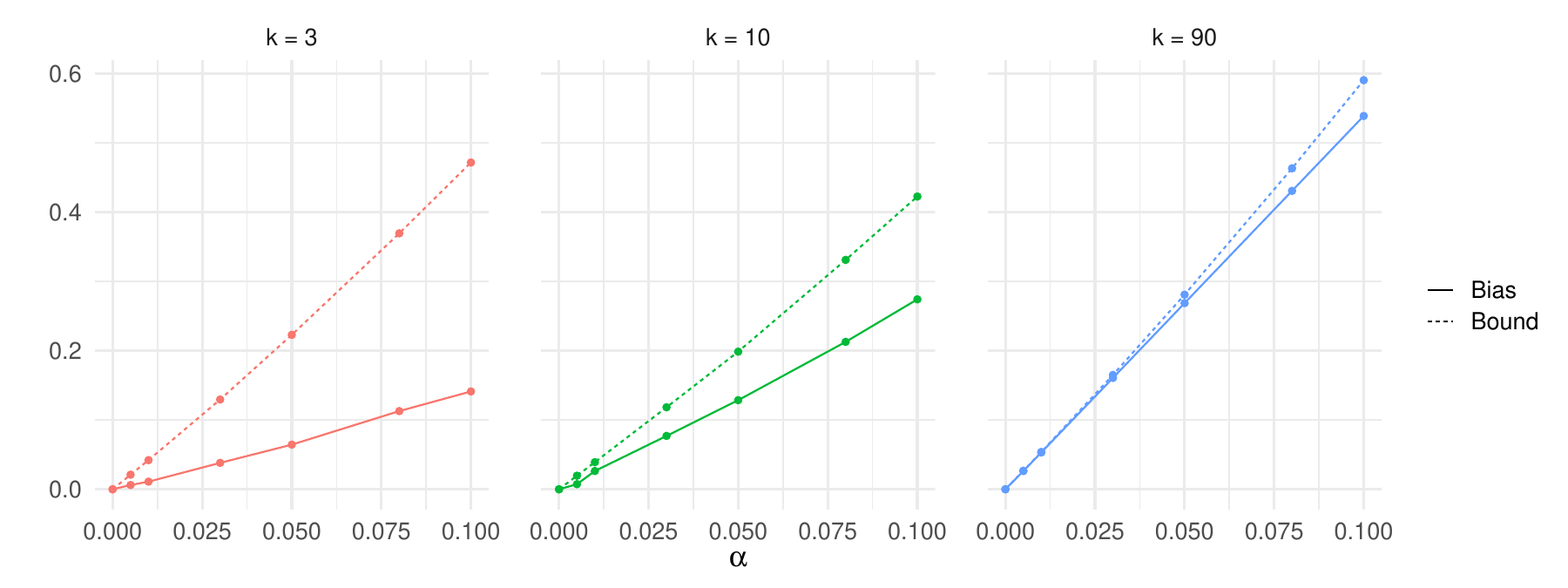}
\caption{Comparison of the true bias and the bound in \eqref{eq:cb_bias_bd2} for
  forward stepwise regression with $k=3$, 10, and 90 steps.  The simulation
  setup is as in Section \ref{sec:experiments} with $s=5$ and $\mathrm{SNR}=2$.} 
\label{fig:cb_bias}
\end{figure}

\subsection{Reducible variance} 

Now we examine the reducible variance empirically, and compare the bound in
\eqref{eq:cb_rvar_bd} in Proposition \ref{prop:cb_rvar}. We again use the
simulation setup from Section \ref{sec:experiments}, with $s=5$ and
$\mathrm{SNR}=2$, with Figure \ref{fig:cb_rvar} displays contour plots of the
true reducible variance (computed via Monte Carlo) and the dominant term in
\eqref{eq:cb_rvar_bd} as functions of $B$ and $\alpha$, when $g$ is the lasso
estimator with $\lambda = 0.31$.  The two panels appear qualitatively quite
similar, confirming that the dominant term in \eqref{eq:cb_rvar_bd} indeed  
captures the right dependence of the reducible variance on $B,\alpha$.  (Note
that each panel is given its own color scale, which means that any potential
looseness in the constant multiplying $1/(B\alpha)$ in the bound
\eqref{eq:cb_rvar_bd} is not being reflected.) 

\begin{figure}[htb]
\centering
\includegraphics[width=0.75\textwidth]{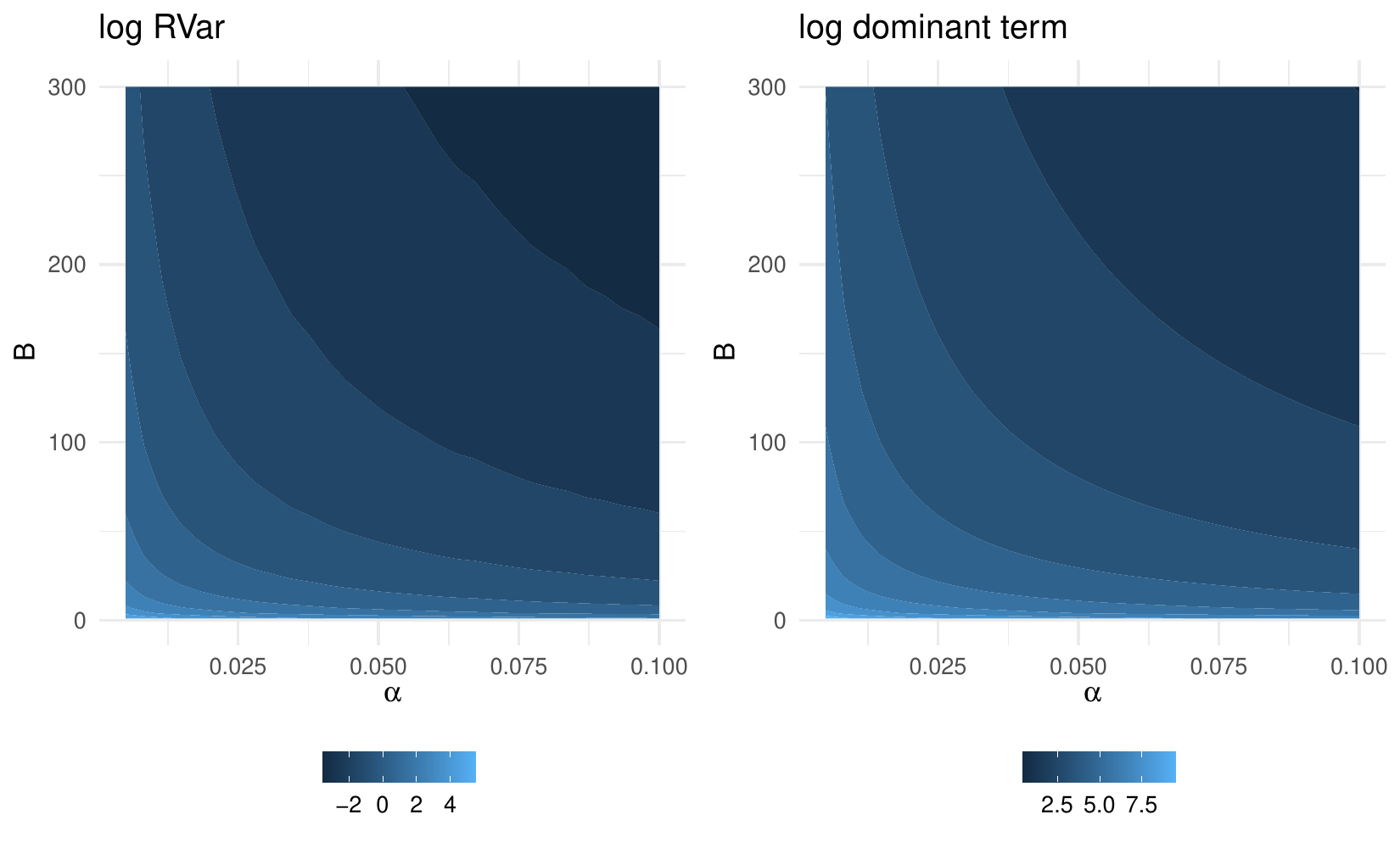}
\caption{Comparison of the true reducible variance and the bound in
  \eqref{eq:cb_rvar_bd} for the lasso with $\lambda=0.31$.  The simulation setup
  is as in Section \ref{sec:experiments} with $s=5$ and $\mathrm{SNR}=2$.}  
\label{fig:cb_rvar}
\end{figure}

\subsection{Irreducible variance} 

We study the behavior of the irreducible variance and its components
empirically.  Following \eqref{eq:cb_ivar}, observe that we can write    
\begin{multline*}
\IVar(\CB_\alpha(g)) = \underbrace{\Var\big( \E\big[ \|Y - g(Y + \sqrt\alpha
  \omega)\|_2^2 \,\big|\, Y \big] \big)}_{\IVar_1} + 
\underbrace{\Var\bigg( \frac{2}{\sqrt\alpha} \E\big[ \langle \omega, g(Y +
  \sqrt\alpha \omega) \rangle \,\big|\, Y \big] \bigg)}_{\IVar_2} +{} \\
\underbrace{2 \Cov\bigg( \E\big[ \|Y - g(Y + \sqrt\alpha 
  \omega)\|_2^2 \,\big|\, Y \big], \frac{2}{\sqrt\alpha} \E\big[ \langle \omega,
  g(Y + \sqrt\alpha \omega) \rangle \,\big|\, Y \big] \bigg)}_{\Cov_{1,2}}.
\end{multline*}
We can similarly define analogous components for $\IVar_1, \IVar_2, \Cov_{1,2}$
for $\IVar(\BY_\alpha(g))$ in \eqref{eq:by_ivar}.  Note that between the CB and
BY estimators, $\IVar_2$ is shared (equal), but $\IVar_1$ and $\Cov_{1,2}$ are
different: where BY uses the original training error $\|Y-g(Y)\|_2^2$, CB
substitutes the conditional expectation of the noise-added training error
\smash{$\E[ \|Y - g(Y + \sqrt\alpha \omega)\|_2^2 \,|\, Y]$}.  

Figure \ref{fig:cb_by_ivar} plots these three components of the irreducible
variance for BY and CB (computed via Monte Carlo), under the same
simulation setup as that from Figure \ref{fig:risk_comparison2}.  The figure
also plots the reducible variance for reference.  We can see that the main
contributor to the large variance exhibited by BY in comparison to CB in Figure
\ref{fig:risk_comparison2} is in fact the first component of the irreducible 
variance $\IVar_1$.  This is intuitive, because for an unstable function $g$
(such as the one in the current simulation), the observed training error can
have a high degree of variability, but taking a conditional expectation over a
noise-adding process acts as a kind of regularization, reducing this variability
greatly. 

\begin{figure}[htb]
\centering
\includegraphics[width=\textwidth]{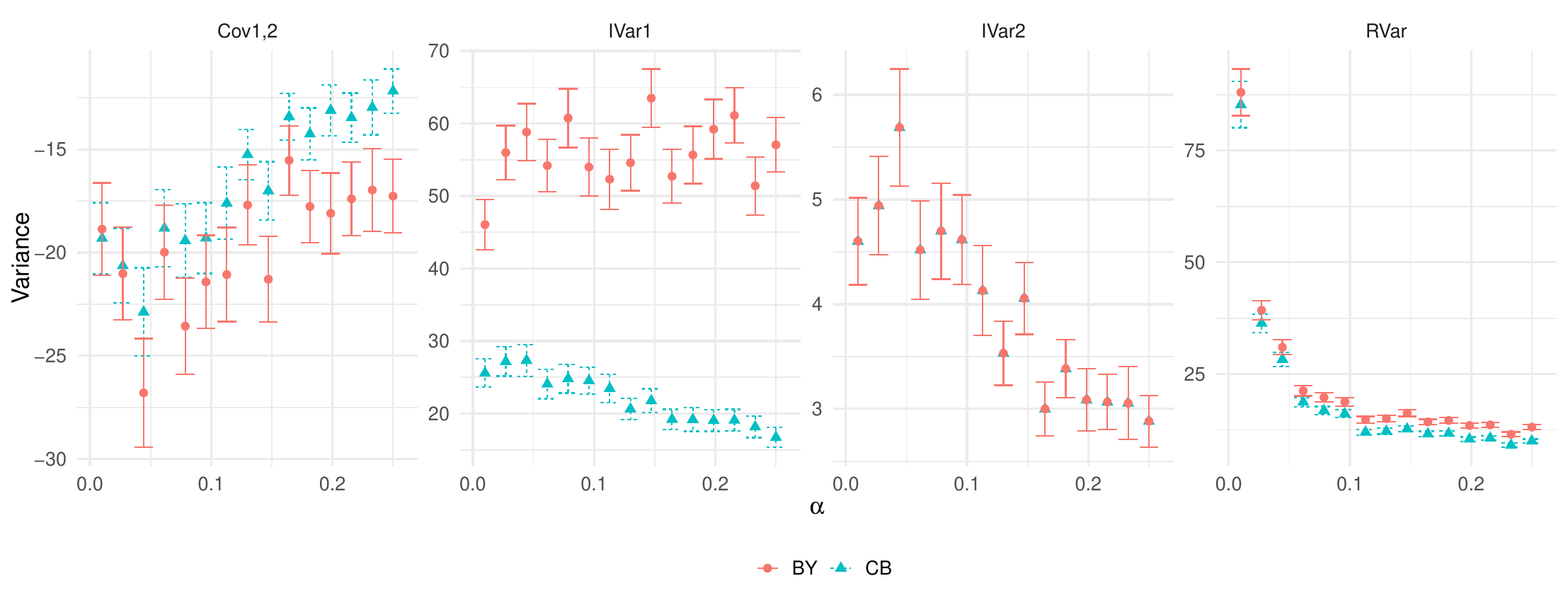}
\caption{Comparison of the irreducible variance, broken down into its three main 
  components, and also the reducible varaince, for the BY and CB estimators,
  under the same simulation setup as that in Figure \ref{fig:risk_comparison2}.}
\label{fig:cb_by_ivar}
\end{figure}

\subsection{Model selection}

Below we report the results of repeating the experiment in Section
\ref{sec:image_denoising} over 10 draws of synthetic noise, to create 10 noisy  
images. For each such noisy image, we compute the CB and SURE risk curves and 
calculate the minimizing values of $\lambda$. Figure \ref{fig:hist_lambdas}
displays a histogram of these selected $\lambda$ values, from each method. We
can see that CB often selects $\lambda$ values which are tightly coupled around
those selected by SURE, suggesting some degree of stability in model selection.      

\begin{figure}[htb]
\centering
\includegraphics[width=0.7\textwidth]{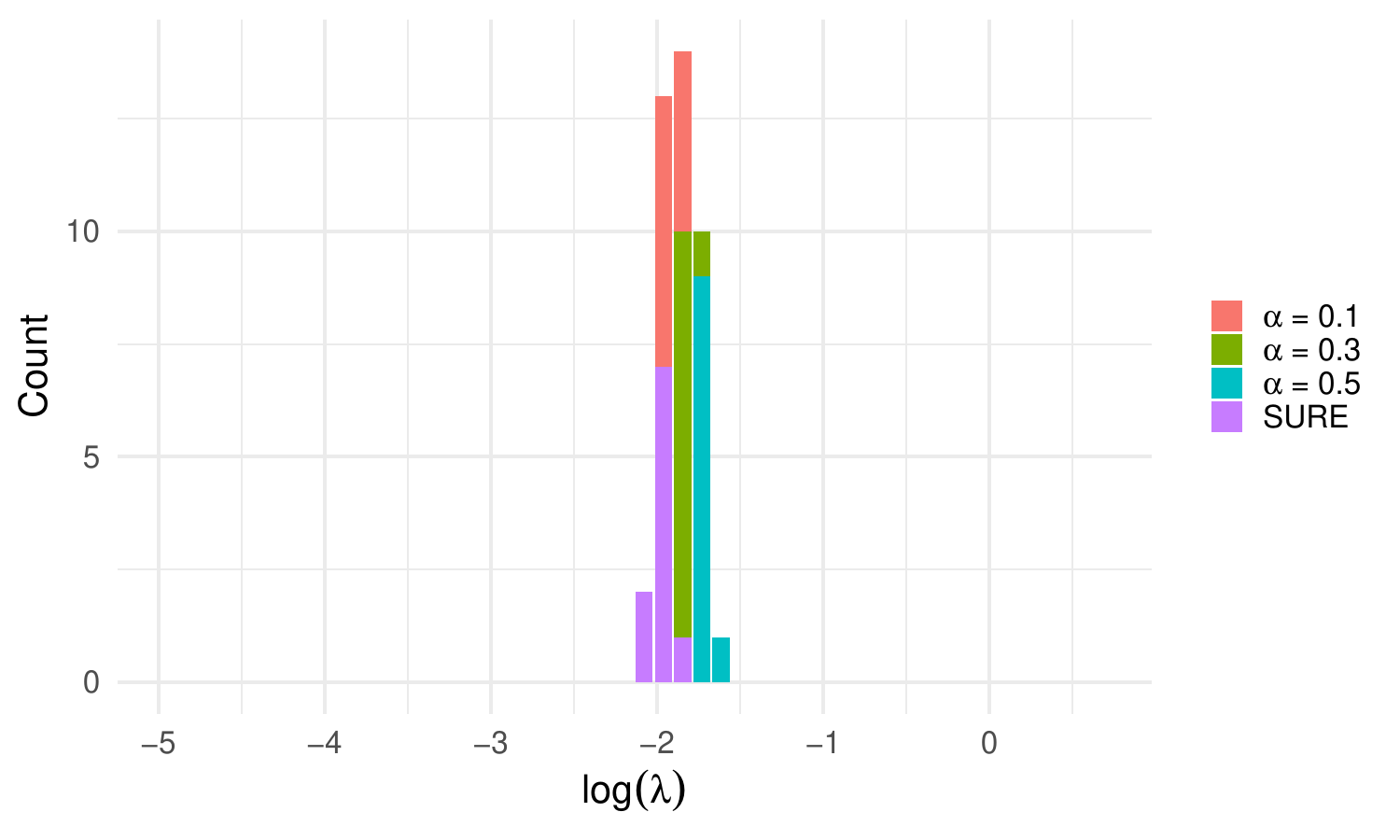}
\caption{Histogram of the minimizing values of $\lambda$ for the CB (at $\alpha
  \in \{0.1, 0.3, 0.5\}$) and SURE curves, over 10 repetitions of the simulation
  setup that generated Figure \ref{fig:parrot_risk}.} 
\label{fig:hist_lambdas}
\end{figure}

\end{document}